\documentclass{article}

\usepackage{xspace}

\usepackage{rotating}
\usepackage{graphicx}
\usepackage{tabularx}
\usepackage{amsfonts}
\usepackage{amsmath}
\usepackage{amsthm}% or letter or a5paper or ... etc
% or letter or a5paper or ... etc
\usepackage{mathabx}
\usepackage{proof}
\usepackage{mathdots}
\usepackage{xcolor}
\usepackage{amssymb}
\usepackage{color}
\usepackage{mathrsfs}
\usepackage[all]{xy}
\usepackage{lscape}
\usepackage{stmaryrd}
\usepackage{bussproofs}

\usepackage{tikz}
\usetikzlibrary{positioning,shapes,shapes.multipart}
\usetikzlibrary{arrows,snakes,backgrounds}

%%%%%%  Define your THEOREM-LIKE ENVIRONMENTS
%%%%%%  (a 'proof' environment is already defined by default)

\newtheorem{theorem}{Theorem}[section]  
\newtheorem{corollary}[theorem]{Corollary}
\newtheorem{lemma}[theorem]{Lemma}
\newtheorem{proposition}[theorem]{Proposition}

\newtheorem{definition}[theorem]{Definition}
\newtheorem{remark}[theorem]{Remark}

\newcommand{\B}[1]{\mathbf{#1}}
\newcommand{\D}[1]{\mathscr{#1}}

\newcommand{\C}[1]{\mathcal{#1}}

\newcommand{\OV}[1]{\mathbin{\overline{#1}}}

		% flot symetrique: u<-v + v<-u

%\mkern-3mu\vee\mkern-3mu

 \newcommand{\Nd}{\texttt{NI}$^2$\xspace}
 \newcommand{\Nv}{{\texttt{NI}$^{\mkern-3mu\mathord{\vee}\mkern-3mu}$}\xspace}
 \newcommand{\Ndv}{\texttt{NI}$^{2\mathord{\vee}}$\xspace}
 \newcommand{\Nda}{\texttt{NI}$^2_{at}$\xspace}
 \newcommand{\Ld}{$\mathcal{L}^2$\xspace}
 \newcommand{\Lv}{$\mathcal{L}^{\vee}$\xspace}
 \newcommand{\Ldv}{$\mathcal{L}^{2\vee}$\xspace}

 \newcommand{\Ldm}{\mathcal{L}^2}
 
 \newcommand{\Ldvm}{\mathcal{L}^{2\vee}}

\usepackage{varwidth}
\newsavebox{\mypti}
\newsavebox{\mypto}
\newsavebox{\mypta}
\newsavebox{\myptu}
\newsavebox{\mypte}

 \newcommand*\SD[3]{%
\begin{tikzpicture}[baseline=(C.base)]
    \node[rectangle split, rectangle split horizontal, rectangle split parts=2, rectangle split part align=base, inner sep=1pt](C) {${#2}$ \nodepart{two}  $\left\ldbrack{#3}\right\rdbrack$};
  \end{tikzpicture}
}

\newcommand*\SF[3]{
\begin{tikzpicture}[baseline=(C.base)]
    \node[rectangle split, rectangle split horizontal, rectangle split parts=2, rectangle split part fill={black!10,black!10} , rectangle split part align=base, inner sep=1pt](C) {${#2}$ \nodepart{two}  \colorbox{white}{$ {#3} $}};
  \end{tikzpicture}
}

\newcommand*\SDI[3]{
\begin{tikzpicture}[baseline=(C.base)]
    \node[rectangle split, rectangle split horizontal, rectangle split parts=2, rectangle split part align=base, inner xsep=1pt, inner ysep=-1pt](C) {${#2}$ \nodepart{two}  \colorbox{white}{\makebox[1.2em][c]{$\left\ldbrack{#3}  \right\rdbrack\;$}}};
  \end{tikzpicture}
}

\newcommand*\SFI[3]{
\begin{tikzpicture}[baseline=(C.base)]
    \node[rectangle split, rectangle split horizontal, rectangle split parts=2, rectangle split part fill={black!10,black!10} , rectangle split part align=base, inner sep=1pt](C) {${#2}$ \nodepart{two}  \colorbox{white}{\makebox[1.2em][c]{$ {#3} $}}};
  \end{tikzpicture}
}

\newcommand*\SDO[3]{
\begin{tikzpicture}[baseline=(C.base)]
    \node[rectangle split, rectangle split horizontal, rectangle split parts=2, rectangle split part align=base, inner xsep=1pt, inner ysep=-1pt](C) {${#2}$ \nodepart{two}  \colorbox{white}{\makebox[.8em][c]{$\left\ldbrack {#3} \right\rdbrack\;$}}};
  \end{tikzpicture}
}

\newcommand*\SFO[3]{
\begin{tikzpicture}[baseline=(C.base)]
    \node[rectangle split, rectangle split horizontal, rectangle split parts=2, rectangle split part fill={black!10,black!10} , rectangle split part align=base, inner sep=1pt](C) {${#2}$ \nodepart{two}  \colorbox{white}{\makebox[.8em][c]{$ {#3} $}}};
  \end{tikzpicture}
}

\newcommand*\cexpD[3]{
\begin{tikzpicture}[baseline=(C.base)]
    \node[rectangle split, rectangle split horizontal, rectangle split parts=2, rectangle split part fill={black!10,black!10} , inner sep=1pt
    ](C) {${#2}$ \nodepart{two}  \colorbox{white}{\makebox[1.2em][c]{$ {#3} $}}};
  \end{tikzpicture}
}

\makeatletter
\newcommand*{\textlabel}[2]{%
  \edef\@currentlabel{#1}% Set target label
  \phantomsection% Correct hyper reference link
  #1\label{#2}% Print and store label
}
\makeatother

%%%%%%  
%%%%%%  PAPER INFORMATION (TITLE, AUTHOR, ETC.)
%%%%%%

%%%  The volume number and the year of publication.
%%%  Will be inserted by the Editorial Office.

%\setcounter{vol}{82}    %%  volume number 
%\setcounter{rok}{2006}  %%  publication year
 
%%%  Material for the headlines:

%\HeadingsInfo{Tranchini, Pistone, Petrolo}{The naturality of natural deduction}

\usepackage{authblk}

\title{The naturality of natural deduction}
\author[1]{Luca Tranchini}
\author[2]{Paolo Pistone}
\author[3]{Mattia Petrolo}
%\affil[1]{\small\emph{Dipartimento di Matematica e Fisica, Universit\`a Roma Tre}}
\affil[1,2]{\small\emph{Wilhelm-Schickard-Institut, Universit\"at T\"ubingen}}
\affil[3]{\small\emph{Centro de Ci\^encias Naturais e Humanas, Universidade Federal do ABC}}

\date{}

\EnableBpAbbreviations

\begin{document}
\maketitle

%\setcounter{page}{1}     %%  Initial page number. Do not change. 

%%%%%%  FORMAT OF AUTHOR AND TITLE INFORMATION:
 
%\threeAuthorsTitle{L.\ts Tranchini}{P.\ts Pistone}{M. 
%\ts Petrolo}{The naturality\newline of natural deduction}
%

%%%%%%  INFORMATION FOR FOOTER OF FIRST PAGE
%%%%%%  will be inserted by the Editorial Office.

%\PresentedReceived{Name of Editor}{December 1, 2005}

%%%%%%  ABSTRACT AND KEYWORDS  (obligatory)

\begin{abstract}Developing a suggestion by Russell, Prawitz showed how the usual natural deduction inference rules for disjunction, conjunction and absurdity can be derived using those for implication and the second order quantifier in propositional intuitionistic second order logic \Nd. It is however well known that the translation does not preserve the relations of identity among derivations induced by the permutative conversions and immediate expansions for the definable connectives, at least when the equational theory of \Nd is assumed to consist only of $\beta$ and $\eta$ equations. On the basis of the categorial interpretation of \Nd, we introduce a new class of equations expressing what in categorial terms is a naturality condition satisfied by the transformations interpreting \Nd-derivations. We show that the Russell-Prawitz translation does preserve identity of proof with respect to the enriched system by highlighting the fact that naturality corresponds to a generalized permutation principle. Finally we sketch how these results could be used to investigate the properties of connectives definable in the framework of higher-level rules.
\end{abstract}

%\Keywords{Identity of proof, permutative conversions, dinaturality condition, functorial interpretation,  $\eta$-conversion, Russell-Prawitz translation, second order logic.}

%%%%%  THE BODY OF THE PAPER

\section*{Introduction}\label{intro} 

Since Russell \cite{Russell1903} it is known that in propositional second order logic it is possible to define disjunction (as well as conjunction and absurdity) using implication and the second order quantifier.
Prawitz \cite{Prawitz1965} showed how the usual natural deduction inference rules for disjunction (as well as for conjunction and absurdity) can be derived using those for implication and the second order quantifier in propositional intuitionistic second order logic. Following \cite{Ferreira2009,Ferreira2013}, but see also  \cite{Acz01}, we refer to the derivability-preserving translation of propositional intuitionistic second order logic \Ndv{} into its disjunction- (and conjunction- and absurdity-) free fragment \Nd{} as the ``Russell-Prawitz translation''.

The conversions used to establish the normalization results for natural deduction systems can be viewed as inducing an equational theory on derivations \cite{Prawitz1971}. One may therefore expect that the Russell-Prawitz translation preserves the relations of equivalence among derivations. However, this is not the case in general, at least when one considers the usual equational theory for \Nd, i.e.~the one consisting of the so called $\beta$- and $\eta$-equations. In particular, although $\beta$-equivalent derivations in the full language are mapped onto $\beta$-equivalent derivations in the implicational fragment, the same does not happen for $\eta$-equivalent derivations, nor for derivations which are equivalent modulo the equations corresponding to the permutative (or commutative) conversions for disjunction (we will refer to these as \mbox{$\gamma$-equations}).

In categorial interpretations of propositional intuitionistic second order logic \cite{Girard1992}, formulas are interpreted as particular functors, and  derivations are viewed as \emph{natural transformations} between these functors. The naturality of the transformations can be expressed as a particular class of equations. Generalizing these equations results in an extension of the $\beta\eta$-equational theory for \Nd{}, and the Russell-Prawitz translation maps $\gamma$-equations onto a particular sub-class of these new equations. 

With the goal of making these results accessible to a wider community, we will give a purely proof-theoretical presentations of them. In particular, we will highlight the fact that the naturality condition, in terms of natural deduction, corresponds to a general permutation principle.

We will show that our results are actually a generalization of some elementary facts which have gone so far unnoticed, namely that particular classes of instances of the $\gamma$- and $\eta$-equations in \Ndv{} are mapped by the Russell-Prawitz translation onto equations which are included in the $\beta\eta$-equational theory of \Nd.

In a recent series of papers Ferreira and Ferreira \cite{Ferreira2009, Ferreira2013} advanced another approach to solve the problem of preserving $\eta$- and $\gamma$-equations in \Ndv{} (and its extensions with the other definable connectives). The main ingredient of their approach is a result (that they call {\em instantiation overflow}) which holds for the fragment \Nda{} of \Nd{} enjoying the sub-formula property obtained by restricting the elimination rule for the second order quantifier $\forall$E to atomic substitution: in \Nda an unrestricted applications of $\forall$E is derivable provided that the premise of the rule application has the form of the Russell-Prawitz translation of some propositional formula. In a sequel to this paper,  we will analyse Ferreira and Ferreira's results within the framework here introduced.

\section{Preliminaries}\label{sec-prel}

Given a countable set of propositional variables $\mathcal{V}$, the formulas $\Phi^{2\mkern-3mu\vee\mkern-3mu}$ of the language \Ldv{} are defined by the following grammar:
\begingroup\makeatletter\def\f@size{10}\check@mathfonts
$$\Phi^{2\mkern-3mu\vee\mkern-3mu}::=  \quad \mathcal{V} \quad |  \quad (\forall \mathcal{V}\Phi^{2\mkern-3mu\vee\mkern-3mu})\quad | \quad (\Phi^{2\mkern-3mu\vee\mkern-3mu}\supset\Phi^{2\mkern-3mu\vee\mkern-3mu}) \quad | \quad (\Phi^{2\mkern-3mu\vee\mkern-3mu}\vee\Phi^{2\mkern-3mu\vee\mkern-3mu}).$$\endgroup
As usual, we omit outermost parentheses and we take iterated implications to associate to the right.  We call  \Lv{} the restriction of \Ldv{} to the $\{\supset,\vee\}$ language fragment and \Ld{} the restriction of \Ldv{} to the $\{\supset,\forall\}$ language fragment. By $A\ldbrack B/X\rdbrack$ we indicate the result of substituting the formula $B$ for the variable $X$ in  $A$.

We define the natural deduction system \Ndv{} as follows:\footnote{In rule schemata we indicate in square brackets the assumptions which can be discharged by rule applications, whereas in derivation schemata we use square brackets only to indicate an arbitrary number of occurrences of a formula, if the formula is in assumption position, or of the whole sub-derivation having the formula in brackets as conclusion. In derivation schemata, we indicate discharge with roman italics letters (possibly with subscripts) placed above the discharged assumptions and near the rule label. Although according to the definition of derivation (which follows strictly the one of \cite{TS96}), every assumption (both discharged and undischarged) carries a label, to improve readability we will follow the common convention of omitting the labels from undischarged assumptions.}

\begin{definition}[\Ndv-derivation]\label{ni2}$\phantom{A}$
\begin{itemize}
 \item For all formulas $A$ of \Ldv and natural number $n$, $\stackrel{n}{A}$ is a \Ndv-derivation of $A$ from undischarged assumption $A$;
 \item if $\mathscr{D}^*$, $\mathscr{D}'$, $\mathscr{D}''$, $\mathscr{D}_1$ and $\mathscr{D}_2$ are \Ndv-derivations then the following:
\begingroup\makeatletter\def\f@size{10}\check@mathfonts
$$\begin{array}{cc}
\AXC{$\mathscr{D}^*$}\noLine\UIC{$A%\ldbrack Y/X\rdbrack
$}\RightLabel{\footnotesize $\forall$I}
\UIC{$\forall X A$}\DP\qquad\phantom{AAAAA}

&  
\AXC{$\mathscr{D}'$}\noLine\UIC{$\forall X A$}
\RightLabel{\footnotesize $\forall$E}
\UIC{$A \ldbrack B/X\rdbrack$}
\DP
\\
& \\

\AXC{$\stackrel{n}{[A]}$}
\noLine\UIC{$\mathscr{D}_1$}\noLine\UIC{$B$}
\RightLabel{\footnotesize $\supset$I $(n)$}
\UIC{$A\supset B$}
\DP \quad\quad\quad\quad\ 
& 

\AXC{$\mathscr{D}'$}\noLine\UIC{$A\supset B$}
\AXC{$\mathscr{D}''$}\noLine\UIC{$A$}
\RightLabel{\footnotesize $\supset$E}
\BIC{$B$}
\DP
\\
& \\

\AXC{$\D D'$}\noLine\UIC{$A$}
\RightLabel{\footnotesize $\vee$I$_1$}
\UIC{$A \vee B$}
\DP 
\quad
\AXC{$\D D'$}\noLine\UIC{$B$}
\RightLabel{\footnotesize $\vee$I$_2$}
\UIC{$A \vee B$}
\DP \quad\qquad\quad\qquad

&

\AXC{$\D D'$}\noLine \UIC{$A\vee B$}
		    \AXC{$\stackrel{n}{[A]}$}
		    \noLine
		    \UIC{$\D D_{1}$}
		    \noLine
		    \UIC{$C$} 
				      \AXC{$\stackrel{m}{[B]}$}
				      \noLine
				       \UIC{$\D D_{2}$}
					    \noLine
				      \UIC{$C$}
\RightLabel{\footnotesize $\vee$E $(n,m)$}
\TIC{$C$}
\DP
\end{array}$$\endgroup
are \Ndv-derivations as well (provided $X$ does not occur free in the assumptions of $\mathscr{D}^*$), where the assumptions $\stackrel{n}{A}$ (resp.~$\stackrel{m}{B}$) that are undischarged in $\mathscr{D}_1$ (resp.~in $\mathscr{D}_2$) become discharged in the derivation of $A\supset B$ (resp.~of $C$).

\item Nothing else in an \Ndv-derivation.
 
\end{itemize}

\end{definition}

The natural deduction system \Nv{} for propositional intuitionistic logic is the restriction of \Ndv{} to  the language \Lv{}.
The natural deduction system \Nd{} for second order intuitionistic logic (which corresponds via Curry-Howard to the polymorphic lambda calculus of Girard and Reynolds) is the restriction of \Ndv{} to  the language \Ld{}.

% % \begin{remark}\label{sub2}By $\mathscr{D}\ldbrack B/X\rdbrack$  we indicate the result of substituting the formula $B$ for the variable $X$ in $\mathscr{D}$. For a precise definition see \cite{xx}. We only remind that
% % 
% % \begin{itemize}
% %  \item If $\D D = \AXC{$\mathscr{D}_1$}\noLine\UIC{$A$}\RightLabel{\footnotesize $\forall$I}\UIC{$\forall X A$}\DP$ then for all $B$, $\D D \llbracket B/X\rrbracket = \D D$
% % \end{itemize}
% % \end{remark}

Adopting the terminology of lambda calculus, we will refer to the instances of the equation schemata in table \ref{conversions} as $\beta$- and $\eta$-equations (we indicate by $\mathscr{D}\ldbrack B/X\rdbrack$ the result of substituting the formula $B$ for the variable $X$ in $\mathscr{D}$).

\begin{table}[b!]\caption{$\beta$- and $\eta$-conversions}\label{conversions}

\rule{\textwidth}{.5pt}

\begingroup\makeatletter\def\f@size{9}\check@mathfonts
\begin{tabularx}{\textwidth}{XX}
%\hline
{\begin{equation*}\tag{$\beta{\supset}$}
\label{betaimp}\def\defaultHypSeparation{\hskip .02in}
\AXC{$\stackrel{n}{[A]}$}\noLine\UIC{$\mathscr{D}$}\noLine\UIC{$B$}\RightLabel{\scriptsize $\supset$I $(n)$}\UIC{$A\supset  B$}\AXC{$\mathscr{D}'$}\noLine\UIC{$A$}\RightLabel{\scriptsize$\supset$E}\BIC{$B$}\DP   =_{\beta}^{\supset}\ \AXC{$\mathscr{D}'$}\noLine\UIC{$[A]$}\noLine\UIC{$\mathscr{D}$}\noLine\UIC{$B$}\DP
\end{equation*}}
&
{\begin{equation*}\tag{$\eta{\supset}$}
\label{etaimp}\def\defaultHypSeparation{\hskip .02in}
 \AXC{$\mathscr{D}$}\noLine\UIC{$A\supset B$}\AXC{$\stackrel{n}{A}$}\RightLabel{\scriptsize$\supset$E}\BIC{$B$}\RightLabel{\scriptsize$\supset$I $(n)$}\UIC{$A\supset B$}\DP\ =_{\eta}^{\supset} \ \AXC{$\mathscr{D}$}\noLine\UIC{$A\supset B$}\DP
 \end{equation*}
$$\text{(provided $n$ does not occur in $\D D$)}$$

}
\\
&\\
{\begin{equation*}\tag{$\beta{\forall}$}
\label{beta2}
\AXC{$\mathscr{D}$}\noLine\UIC{$A%\ldbrack Y/X\rdbrack
$}\RightLabel{\footnotesize$\forall$I}\UIC{$\forall X A$}\RightLabel{\footnotesize$\forall$E}\UIC{$A \ldbrack B/X\rdbrack$}\DP  =_{\beta}^2 \AXC{$\mathscr{D}\ldbrack B/X\rdbrack$}\noLine\UIC{$A\ldbrack B/X\rdbrack$}\DP\end{equation*}}&
{\begin{equation*}\tag{$\eta{\forall}$}
\label{eta2}
\AXC{$\mathscr{D}$}\noLine\UIC{$\forall X A$}\RightLabel{\footnotesize$\forall$E}\UIC{$A$}\RightLabel{\footnotesize$\forall$I}\UIC{$\forall X A$}\DP \ =_{\eta}^2\ \AXC{$\mathscr{D}$}\noLine\UIC{$\forall X A$}\DP
\end{equation*}}
\\
&\\
\multicolumn{2}{l}{{\parbox{\textwidth}{\begin{equation*}
\tag{$\beta{\lor}$}
\label{betaor}\def\defaultHypSeparation{\hskip .02in}
\AXC{$\mathscr{D}$}\noLine\UIC{$A_i$}\RightLabel{\footnotesize $\vee$I$_i$}\UIC{$A_1\vee A_2$}\AXC{$\stackrel{n}{[A_1]}$}\noLine\UIC{$\mathscr{D}_1$}\noLine\UIC{$C$}\AXC{$\stackrel{m}{[A_2]}$}\noLine\UIC{$\mathscr{D}_2$}\noLine\UIC{$C$}\RightLabel{\footnotesize $\vee$E $(n,m)$}\TIC{$C$}\DP  \ =_{\beta}^{\vee}\ \AXC{$\mathscr{D}$}\noLine\UIC{$[A_i]$}\noLine\UIC{$\mathscr{D}_i$}\noLine\UIC{$C$}\DP
\end{equation*}}}}\\
&\\
\multicolumn{2}{r}{\parbox{\textwidth}{\begin{equation*}\tag{$\eta\lor$}
\label{etaor}\def\defaultHypSeparation{\hskip .05in}
\AXC{$\mathscr{D}$}\noLine\UIC{$A\vee B$}\AXC{$\stackrel{n}{A}$}\RightLabel{\footnotesize $\vee$I}\UIC{$A\vee B$}\AXC{$\stackrel{m}{B}$}\RightLabel{\footnotesize $\vee$I}\UIC{$A\vee B$}\RightLabel{\footnotesize $\vee$E $(n,m)$}\TIC{$A\vee B$}\DP\ =_{\eta}^{\vee} \ \AXC{$\mathscr{D}$}\noLine\UIC{$A\vee B$}\DP\end{equation*}
$$\text{(provided $n, 	m$ do not occur in $\D D$)}$$%
}}\\
\end{tabularx}\endgroup

\medskip
\rule{\textwidth}{.5pt}
\end{table}

\medskip
As usual, we will refer to the rewriting rules obtained  by orienting these equations from left to right  as $\beta$-, $\eta$-{\em reductions} and to those obtained by orienting these equations from right to left as $\beta$-, $\eta$-{\em expansions}. The deductive patterns displayed on the left-hand side (respectively right-hand side) of these equations will be referred to as $\beta$-, $\eta$-{\em redexes} (resp. {\em reducts}). A derivation is $\beta$-, $\eta$-normal if and only if it contains no $\beta$-, $\eta$-redex. 

As is well known (see, e.g.,\cite{GLT89}, p.~76), in order for normal derivations in \Nv{} to enjoy the sub-formula property, a further kind of equations (which we will call $\gamma$) need to be assumed (we indicate with $\dagger$E the application of an elimination rule for ``some'' connective $\dagger$ and with $\overline{\mathscr{D}}$ the derivations of its minor premises):
\begingroup\makeatletter\def\f@size{10}\check@mathfonts
\begin{equation*}\tag{$\gamma\lor$}\label{gammaor}
\def\defaultHypSeparation{\hskip .0in}\AXC{$\mathscr{D}$}\noLine\UIC{$A\vee B$}\AXC{$\stackrel{n}{[A]}$}\noLine\UIC{$\mathscr{D}_1$}\noLine\UIC{$C$}\AXC{$\stackrel{m}{[B]}$}\noLine\UIC{$\mathscr{D}_2$}\noLine\UIC{$C$}\RightLabel{\footnotesize  $\lor$E $(n,m)$}\TIC{$C$}\AXC{$\overline{\mathscr{D}}$}\RightLabel{\footnotesize$\dagger$E}\BIC{$D$}\!\!\!\!\!\!\DP =_{\gamma}^{\vee} \def\defaultHypSeparation{\hskip .0in}\AXC{$\mathscr{D}$}\noLine\UIC{$A\vee B$}\AXC{$\stackrel{n}{[A]}$}\noLine\UIC{$\mathscr{D}_1$}\noLine\UIC{$C$}\AXC{$\overline{\mathscr{D}}$}\RightLabel{\footnotesize$\dagger$E}\BIC{$D$}\AXC{$\stackrel{m}{[B]}$}\noLine\UIC{$\mathscr{D}_2$}\noLine\UIC{$C$}\AXC{$\overline{\mathscr{D}}$}\RightLabel{\footnotesize$\dagger$E}\BIC{$D$}\RightLabel{\footnotesize $\vee$E $(n,m)$}\TIC{$D$}\DP
\end{equation*}\endgroup

The left to right orientation of $\gamma$-equations are usually called {\em permutations} (sometimes also commutations) rather than reductions. However, we will speak of $\gamma$-redexes and $\gamma$-normal derivations. We will use {\em conversion} to cover reductions, expansions and permutations.

In categorial logic (see, e.g, \cite{Seely1979}), as well as in the literature on typed lambda calculi with sums (see for instance \cite{Lindley2007}), however, one usually considers a more general equation schema, namely this:
\begingroup\makeatletter\def\f@size{10}\check@mathfonts
\begin{equation*}\tag{$\gamma_g\!\lor$}\label{gammaorG}
\AxiomC{$\D D$}
\noLine
\UnaryInfC{$A\lor B$}
\AxiomC{$\stackrel{n}{[A]}$}
\noLine
\UnaryInfC{$\D D_{1}$}
\noLine
\UnaryInfC{$C$}
\AxiomC{$\stackrel{m}{[B]}$}
\noLine
\UnaryInfC{$\D D_{2}$}
\noLine
\UnaryInfC{$C$}
\RightLabel{\footnotesize$\lor$E $(n,m)$}
\TrinaryInfC{$[C]$}
\noLine
\UnaryInfC{$\D D_3$}
\noLine
\UnaryInfC{$ D$}
\DisplayProof
\ \ =_{\gamma_g}^{\lor} \!\!\!
\AxiomC{$\D D$}
\noLine
\UnaryInfC{$A\lor B$}
\AxiomC{$\stackrel{n}{[A]}$}
\noLine
\UnaryInfC{$\D D_{1}$}
\noLine
\UnaryInfC{$[C]$}
\noLine
\UnaryInfC{$\D D_3$}
\noLine
\UnaryInfC{$ D$}
\AxiomC{$\stackrel{m}{[B]}$}
\noLine
\UnaryInfC{$\D D_{2}$}
\noLine
\UnaryInfC{$[C]$}
\noLine
\UnaryInfC{$\D D_3$}
\noLine
\UnaryInfC{$ D$}
\RightLabel{\footnotesize$\lor$E $(n,m)$}
\TrinaryInfC{$D$}
\DisplayProof
\end{equation*}
\endgroup
of which $\gamma$ is just an instance obtained by taking $\D D_3$ to consist of  the application of $\dagger$E alone. We call {\em generalized permutations} the left-to-right orientation of these equations.

We indicate by $=_{\beta\eta\gamma_{(g)}}^{\supset2\vee}$ the smallest congruent equivalence realation on \Ndv-derivations induced by these equations (by removing one or more of the subscripts or superscripts we indicate the opportune restrictions of this equivalence).

%is c The proof of proposition \ref{conv} suggest to generalize the permutative conversion for disjunction \eqref{gammaor} to \eqref{gammaorG}, that 

It is worth observing that the schema \eqref{gammaorG} is more general than \eqref{gammaor} in two respects: first, it allows the downwards permutation of an application of $\vee$E across more than one rule application at once; second, it allows the downward permutations of $\vee$E not only when its conclusion is the major premise of an elimination rule, but also when it is the premise of an introduction or the minor premise of an elimination. In fact, the equational theory induced by \eqref{gammaor} is strictly contained in the one induced by \eqref{gammaorG}:

\begin{proposition}[$=_{\gamma}^{\lor}\subsetneq =_{\gamma_g}^{\lor}$] There are derivations $\D D_1$ and $\D D_2$ such that $\D D_1 =_{\gamma_g}^{\lor} \D D_2$ and $\D D_1\neq_{\gamma}^{\lor}\D D_2$.
\end{proposition}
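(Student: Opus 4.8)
The plan is to produce an explicit witnessing pair and to separate the two relations by a single robust invariant. The inclusion $=_{\gamma}^{\lor}\,\subseteq\,=_{\gamma_g}^{\lor}$ is free of charge, since \eqref{gammaor} is literally the instance of \eqref{gammaorG} in which $\D D_3$ is a lone application of $\dagger$E, as already remarked. So the entire content is the strictness, and I would look for a pair of derivations that \eqref{gammaorG} identifies but that \eqref{gammaor} cannot reach. The natural place to look is the second of the two extra freedoms of \eqref{gammaorG} noted just above the statement, namely the permutation of a $\vee$E downward across an \emph{introduction}.

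Concretely, I would take $\D D_1$ to be a derivation whose lowermost rule is $\vee\mathrm{I}_1$ applied to the conclusion $C$ of a $\vee$E, so that $\D D_1$ ends with $C$ obtained by $\vee$E and then $C\lor E$ obtained by $\vee\mathrm{I}_1$; and I would let $\D D_2$ be the result of pushing that $\vee\mathrm{I}_1$ upward into the two minor-premise subderivations of the $\vee$E, so that $\D D_2$ ends with a $\vee$E whose two branches already conclude $C\lor E$. For definiteness one may take $A,B,C,E$ atomic and the three subderivations sitting above the $\vee$E to be bare assumptions (the discharges of $A$ and $B$ being vacuous), which makes both expressions genuine $\Ndv$-derivations with the same open assumptions. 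By construction $\D D_1 =_{\gamma_g}^{\lor}\D D_2$ holds by a single generalized permutation, with $\D D_3$ the lone $\vee\mathrm{I}_1$ step and final formula $D = C\lor E$.

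The crux, and the step I expect to be the main obstacle, is the negative half $\D D_1 \neq_{\gamma}^{\lor}\D D_2$. I would prove it with the invariant \emph{``the lowermost rule of the derivation is an introduction''}. The key observation is that in \emph{every} instance of \eqref{gammaor} the lowermost rule of the redex is an application of $\dagger$E and that of the reduct is an application of $\vee$E, so both sides are always \emph{eliminations}. Hence a single $\gamma$-step can alter the lowermost rule of a derivation only when performed at the root, and a root step presupposes that the lowermost rule is already an elimination. Consequently, if a derivation ends with an introduction then no root step applies, every $\gamma$-step must occur inside a proper subderivation, and the lowermost rule is left untouched; by induction along the zig-zag witnessing $=_{\gamma}^{\lor}$-equivalence, the property ``ends with an introduction'' (indeed ``ends with $\vee\mathrm{I}_1$'') propagates through the whole $=_{\gamma}^{\lor}$-class of $\D D_1$.

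Since $\D D_1$ ends with the introduction $\vee\mathrm{I}_1$ whereas $\D D_2$ ends with the elimination $\vee$E, the two derivations lie in different $=_{\gamma}^{\lor}$-classes, which gives $\D D_1 \neq_{\gamma}^{\lor}\D D_2$ and hence the claimed strict inclusion. The only delicate point is the inductive invariance claim, where one must quantify over \emph{all} instances of \eqref{gammaor} — in particular over every eliminated connective $\dagger$ and every choice of minor-premise derivations $\overline{\D D}$ — and check that closure under substitution and under embedding into larger derivations cannot create a root occurrence whose last rule is an introduction; this is immediate, since substitution does not change the shape of a rule and a conversion inside a context never touches the context's lowermost rule.
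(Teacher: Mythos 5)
Your proof is correct, but it takes a genuinely different route from the paper's. The paper establishes strictness by contrasting rewriting behaviour: it recalls that the rewriting system given by the left-to-right orientation of \eqref{gammaor} is strongly normalizing (citing Prawitz), and then exhibits an infinite $\gamma_g$-reduction sequence built from nested applications of $\vee$E (a $\vee$E permuted into the minor-premise position of another $\vee$E, which recreates the configuration and so loops forever); separating the two \emph{equational theories} from this contrast implicitly uses confluence plus strong normalization of $\gamma$, so that the two (distinct, $\gamma$-normal) derivations at the start of the infinite sequence cannot be $\gamma$-equivalent. You instead exploit the other extra freedom of \eqref{gammaorG} --- permutation across an \emph{introduction} --- and separate the theories by the elementary invariant that both sides of every instance of \eqref{gammaor} end with an elimination ($\dagger$E on the left, $\vee$E on the right), so that a derivation ending with $\vee$I$_1$ can only be $=_{\gamma}^{\lor}$-related to derivations ending with $\vee$I$_1$; this invariant argument is sound, since congruence steps inside proper subderivations leave the last rule untouched and substitution of formulas for variables preserves the rule tree. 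What each approach buys: yours is more self-contained (no appeal to Prawitz's strong normalization theorem or to Church--Rosser) and makes the witnessing pair and the separating invariant completely explicit, arguably filling in a step the paper leaves implicit; the paper's argument proves something stronger about the rewriting systems themselves --- non-termination of the generalized permutations --- which the paper reuses later (in the comparison with the Ferreira--Ferreira translation, where non-strong-normalization of \eqref{gammaorG} is again invoked), so its example does double duty.
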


\begin{proof} The proposition is established by remarking that the rewriting system consisting of the $\gamma$-conversion is strongly normalizing \cite{Prawitz1971}, while the one consisting of the $\gamma_g$-conversion is not \cite{Ghani1995,Altenkirch2001,Lindley2007}.
\end{proof}

The notion of generalized permutation applies to all connective with elimination rules in general form (\cite{SH1984}). If we consider the extension of \Lv and \Nv with the $\bot$ and its elimination rule $\AXC{$\bot$}\RightLabel{\footnotesize$\bot$E}\UIC{$C$}$, the standard permutation for $\bot$ is \cite{GLT89}:
\begingroup\makeatletter\def\f@size{10}\check@mathfonts
\begin{equation*}\tag{$\gamma\bot$}\label{gammabot}\AXC{$\mathscr{D}$}\noLine\UIC{$\bot$}\RightLabel{\footnotesize$\bot$E}\UIC{$C$}\AXC{$\overline{\mathscr{D}}$}\RightLabel{\footnotesize$\dagger$E}\BIC{$D$}\DP\quad =_\gamma^{\bot}\quad \AXC{$\mathscr{D}$}\noLine\UIC{$\bot$}\RightLabel{\footnotesize$\bot$E}\UIC{$D$}\DP\end{equation*}
\endgroup
and it generalizes to the following conversion:
\begingroup\makeatletter\def\f@size{10}\check@mathfonts
\begin{equation*}\tag{$\gamma_g\bot$}\label{gammabotG}\AXC{$\mathscr{D}$}\noLine\UIC{$\bot$}\RightLabel{\footnotesize$\bot$E}\UIC{$[C]$}\noLine\UIC{$\mathscr{D}'$}\noLine\UIC{$D$}\DP\quad  =_{\gamma_g}^{\bot}\quad \AXC{$\mathscr{D}$}\noLine\UIC{$\bot$}\RightLabel{\footnotesize$\bot$E}\UIC{$D$}\DP\end{equation*}
\endgroup 
expressing in categorial terms the initiality of $\bot$.

\section{Properties of the Russell-Prawitz translation}\label{sec-rp}
Prawitz \cite{Prawitz1965} showed how to extend Russell's translation of formulas of \Ldv into formulas of \Ld  into a translation of \Ndv\ into derivations of \Nd. The Russell-Prawitz translation (for short, RP-translation), is defined as follows:\footnote{We use $\equiv$ for syntactical identity.}

\begin{definition}[Russell-Prawitz translation: $\Ldvm \mapsto \Ldm$]\label{rp-formula}
The {\em Russell-Prawitz translation} (henceforth {\em RP-translation}) of an \Ldv-formula $A$ is the \Ld-formula $A^*$ defined by induction on the number of logical signs in $A$ as follows:
\begingroup\makeatletter\def\f@size{10}\check@mathfonts
\begin{equation*}
\begin{array}{r@{\ \equiv\ }l}
Y^{*}& Y   \\   (A\supset B)^{*} & A^{*}\supset B^{*} \\
\big (\forall Y A\big )^{*} & \forall Y  A^{*} \\   (A\vee B)^{*}& \forall X((A^{*}\supset X)\supset (B^{*}\supset X)\supset X)
\end{array}
\end{equation*}\endgroup

\end{definition}

\begin{definition}[Russell-Prawitz translation: $\text{\Ndv} \mapsto \text{\Nd}$]\label{rp-derivation}The {\em RP-trans-lation} of an \Ndv-derivation $\mathscr{D}$ is the \Nd-derivation $\mathscr{D}^*$ defined by induction on the number of inference rules applied in $\mathscr{D}$ as follows:

\begin{itemize}
\item if $\mathscr{D}\equiv\ \stackrel{n}{A}$, then $\mathscr{D}^*\equiv\ \stackrel{n}{A^*}$;

\item if $\D D \equiv {\small \AxiomC{$\D D'$}
\noLine
\UnaryInfC{$A$}
\RightLabel{\footnotesize$\lor$I$_{1}$}
\UnaryInfC{$A\lor B$}
\DisplayProof}$, then $\D D^{*}\equiv
{\small \AxiomC{$\stackrel{n}{A^{*}\supset X}$}
\AxiomC{$\D D'^{*}$}
\noLine
\UnaryInfC{$A^{*}$}
\RightLabel{\footnotesize$\supset$E}
\BinaryInfC{$X$}
\RightLabel{\footnotesize$\supset$I}
\UnaryInfC{$(B^{*}\supset X)\supset X$}
\RightLabel{\footnotesize$\supset$I $(n)$}
\UnaryInfC{$(A^{*}\supset X)\supset (B^{*}\supset X)\supset X$}
\RightLabel{\footnotesize$\forall I$}
\UnaryInfC{$(A\lor B)^{*}$}
\DisplayProof}
$ 

(provided $n$ does not occur in $\mathscr{D'}^*$);
\item if $\D D\equiv
{\small \AxiomC{$\D D'$}
\noLine
\UnaryInfC{$B$}
\RightLabel{\footnotesize$\lor I_{2}$}
\UnaryInfC{$A\lor B$}
\DisplayProof}$, then $\D D^{*}\equiv
{\small \AxiomC{$\stackrel{n}{B^{*}\supset X}$}
\AxiomC{$\D D'^{*}$}
\noLine
\UnaryInfC{$B^{*}$}
\RightLabel{\footnotesize$\supset$E}
\BinaryInfC{$X$}
\RightLabel{\footnotesize$\supset$I $(n)$}
\UnaryInfC{$(B^{*}\supset X)\supset X$}
\RightLabel{\footnotesize$\supset$I}
\UnaryInfC{$(A^{*}\supset X)\supset (B^{*}\supset X)\supset X$}
\RightLabel{\footnotesize$\forall I$}
\UnaryInfC{$(A\lor B)^{*}$}
\DisplayProof}
$ 

(provided $n$ does not occur in $\mathscr{D'}^*$);
\item if $\D D\equiv
{\small \AxiomC{$\D D'$}
\noLine
\UnaryInfC{$A\lor B$}
\AxiomC{$\stackrel{n}{[A]}$}
\noLine
\UnaryInfC{$\D D_{1}$}
\noLine
\UnaryInfC{$C$}
\AxiomC{$\stackrel{m}{[B]}$}
\noLine
\UnaryInfC{$\D D_{2}$}
\noLine
\UnaryInfC{$C$}
\RightLabel{\footnotesize$\lor$E $(n,m)$}
\TrinaryInfC{$C$}
\DisplayProof}$, then
\begingroup\makeatletter\def\f@size{10}\check@mathfonts$$ \D D^{*}\equiv
\def\defaultHypSeparation{\hskip .09in}\def\ScoreOverhang{2pt}\AxiomC{$\D D'^{*}$}
\noLine
\UnaryInfC{$(A\lor B)^{*}$}
\RightLabel{\footnotesize$\forall E$}
\UnaryInfC{$(A^*\supset C^*)\supset (B^*\supset C^*)\supset C^*$}
\AxiomC{$\stackrel{n}{[A^{*}]}$}
\noLine
\UnaryInfC{$\D D_{1}^{*}$}
\noLine
\UnaryInfC{$C^{*}$}
\RightLabel{\footnotesize$\supset$I $(n)$}
\UnaryInfC{$A^*\supset C^*$}
\RightLabel{\footnotesize$\supset$E}
\BinaryInfC{$(B^*\supset C^*)\supset C^*$}
\AxiomC{$\stackrel{m}{[B^{*}]}$}
\noLine
\UnaryInfC{$\D D_{2}^{*}$}
\noLine
\UnaryInfC{$C^{*}$}
\RightLabel{\footnotesize$\supset$I $(m)$}
\UnaryInfC{$B^*\supset C^*$}
\RightLabel{\footnotesize$\supset$E}
\BinaryInfC{$C^*$}
\DisplayProof
$$\endgroup
\item all other rules are translated in a trivial way.

\end{itemize}
\end{definition}

The RP-translation $^*$ maps $\beta$-equivalent derivations into $\beta$-equivalent derivations in \Nd:

\begin{proposition}[$=_{\beta}^{2\supset\vee}\ \stackrel{*}{\mapsto}\ =_{\beta}^{2\supset}$]\label{betapres}
If $\mathscr{D}_1 =_\beta^{\supset2\vee} \mathscr{D}_2$ then $\mathscr{D}_1^* =_\beta^{\supset2} \mathscr{D}_2^*$.
\end{proposition}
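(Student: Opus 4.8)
The plan is to use that $=_\beta^{\supset2\vee}$ is, by definition, the least equivalence relation on \Ndv-derivations that is compatible with the rule applications (a congruence), is closed under substitution of formulas for variables, and contains every instance of the three $\beta$-schemata \eqref{betaimp}, \eqref{beta2} and \eqref{betaor}. Since $=_\beta^{\supset2}$ is itself such a congruence on \Nd-derivations, and since the RP-translation of Definition~\ref{rp-derivation} is \emph{compositional} — each rule application is sent to a fixed \Nd-context built uniformly from the translations of the immediate subderivations — it suffices to verify the statement on the generators. Indeed, if $\mathscr{D}_2$ arises from $\mathscr{D}_1$ by replacing one instance $\mathscr{R}_1$ of a $\beta$-redex by its reduct $\mathscr{R}_2$ inside a derivation context $\mathscr{C}$, then $\mathscr{D}_1^*$ and $\mathscr{D}_2^*$ are one and the same translated context applied to $\mathscr{R}_1^*$ and $\mathscr{R}_2^*$; so once I know $\mathscr{R}_1^* =_\beta^{\supset2}\mathscr{R}_2^*$, congruence of $=_\beta^{\supset2}$ propagates the equivalence through $\mathscr{C}$, and transitivity/symmetry handle arbitrary chains of conversions.

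Before the case analysis I would record two routine substitution lemmas, each proved by induction on $\mathscr{D}$: first, that translation commutes with substitution of a formula for a variable, i.e.\ $(\mathscr{D}\ldbrack B/X\rdbrack)^{*}=\mathscr{D}^{*}\ldbrack B^{*}/X\rdbrack$; second, that translation commutes with the grafting of a derivation onto a discharged assumption, i.e.\ the translation of the derivation obtained by replacing the open assumptions $A$ in $\mathscr{D}$ by copies of a derivation $\mathscr{E}$ of $A$ is obtained by replacing the open assumptions $A^{*}$ in $\mathscr{D}^{*}$ by copies of $\mathscr{E}^{*}$. The first lemma, together with the fact that $=_\beta^{\supset2}$ is closed under substitution, also discharges the substitution-closure clause in the definition of $=_\beta^{\supset2\vee}$.

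The cases \eqref{betaimp} and \eqref{beta2} require essentially no work, since $\supset$I, $\supset$E, $\forall$I and $\forall$E are translated homomorphically. The translation of a $\beta\supset$-redex (resp.\ a $\beta\forall$-redex) is literally a $\beta\supset$-redex (resp.\ $\beta\forall$-redex) of \Nd, and by the grafting lemma (resp.\ the formula-substitution lemma) its one-step reduct coincides with the translation of the reduct of the original equation. Hence in these cases the two translations are already related by $=_\beta^{\supset}$, respectively $=_\beta^{2}$.

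The real work, and the step I expect to be the main obstacle, is the disjunctive case \eqref{betaor}. Here the translation of the redex is the composite of the translation of $\vee$I$_i$, which is the Church-style encoding of the $i$-th injection — a derivation of $(A_1\vee A_2)^{*}$ abstracting the two ``continuations'' $A_1^{*}\supset X$ and $A_2^{*}\supset X$ and using only the $i$-th — with the translation of $\vee$E, which instantiates the bound variable $X$ by $C^{*}$ and then feeds the two abstracted minor derivations $\mathscr{D}_1^{*},\mathscr{D}_2^{*}$ to it by two applications of $\supset$E. I would exhibit the reduction sequence explicitly: one $\beta\forall$-conversion performs the instantiation $X:=C^{*}$ (applicable because $X$ was chosen fresh, so it occurs in neither $A_1^{*}$ nor $A_2^{*}$), and two subsequent $\beta\supset$-conversions apply the encoding to the two continuations. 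Since the $i$-th injection ignores the other continuation, the minor derivation of the non-selected branch is erased by a vacuous discharge, while the selected continuation $\supset$I$(n_i).\mathscr{D}_i^{*}$ becomes applied to $\mathscr{D}^{*}$; a final $\beta\supset$-conversion of this last redex grafts $\mathscr{D}^{*}$ onto the discharged assumptions $A_i^{*}$ of $\mathscr{D}_i^{*}$. By the grafting lemma the outcome is exactly the translation of $\mathscr{D}_i$ with $\mathscr{D}$ grafted onto its discharged assumptions $A_i$, i.e.\ the translation of the right-hand side of \eqref{betaor}, so the two translations are related by $=_\beta^{\supset2}$. The delicate points are the bookkeeping of the vacuous discharge that erases the unused branch and the verification that the freshness of the bound variable makes the instantiation step valid.
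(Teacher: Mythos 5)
Your proposal is correct and follows essentially the same route as the paper: the paper likewise reduces the problem to the $\vee$-redex case (the $\supset$ and $\forall$ cases being trivial since those rules are translated homomorphically) and then verifies directly that the translated redex $\beta$-reduces — via the $\beta\forall$ instantiation $X:=C^{*}$, the two $\beta\supset$ applications to the continuations (one vacuous), and the final grafting step — to the translation of the reduct. The congruence, compositionality and substitution/grafting lemmas you spell out are exactly the infrastructure the paper leaves implicit behind its phrase ``it suffices to show this in the case of a $\vee$-redex.''
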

\begin{proof}
It suffices to show this in the case of a $\vee$-redex. If the redex contains an application of $\vee$I$_{1}$, then we just verify that:
\begingroup\makeatletter\def\f@size{10}\check@mathfonts
\begin{equation*}
\AxiomC{$\stackrel{m}{A^{*}\supset X}$}
\AxiomC{$\D D^{*}$}
\noLine
\UnaryInfC{$A^{*}$}
\RightLabel{\footnotesize$\supset$E}
\BinaryInfC{$X$}
\RightLabel{\footnotesize$\supset$I}
\UnaryInfC{$(B^{*}\supset X)\supset X$}
\RightLabel{\footnotesize$\supset$I $(m)$}
\UnaryInfC{$(A^{*}\supset X)\supset (B^{*}\supset X)\supset X$}
\RightLabel{\footnotesize$\forall I$}
\UnaryInfC{$(A\lor B)^{*}$}
\RightLabel{\footnotesize$\forall E$}
\UnaryInfC{$(A^*\supset C^*)\supset (B^*\supset C^*)\supset C^*$}
\AxiomC{$\stackrel{n_1}{[A^{*}]}$}
\noLine
\UnaryInfC{$\D D_{1}^{*}$}
\noLine
\UnaryInfC{$C^{*}$}
\RightLabel{\footnotesize$\supset$I $(n_1)$}
\UnaryInfC{$A^*\supset C^*$}
\RightLabel{\footnotesize$\supset$E}
\BinaryInfC{$(B^*\supset C^*)\supset C^*$}
\AxiomC{$\stackrel{n_2}{[B^{*}]}$}
\noLine
\UnaryInfC{$\D D_{2}^{*}$}
\noLine
\UnaryInfC{$C^{*}$}
\RightLabel{\footnotesize$\supset$I $(n_2)$}
\UnaryInfC{$B^*\supset C^*$}
\RightLabel{\footnotesize$\supset$E}
\BinaryInfC{$C^*$}
\DisplayProof\end{equation*}\endgroup
is $\beta$-equivalent to 
\begingroup\makeatletter\def\f@size{10}\check@mathfonts$$\AXC{$\D D^{*}$}
\noLine
\UIC{$[A^*]$}
\noLine
\UIC{$\D D_{1}^{*}$}
\noLine
\UIC{$C^*$}
\DisplayProof
$$\endgroup
The case where the redex contains an application of  $\vee$I$_{2}$ is treated similarly.
\end{proof}

Analogous results do {\em not} hold for the equivalence relations induced by $\eta$- and $\gamma$-equations (see e.g.~\cite{GLT89}, p.~85): in particular, it is not the case that whenever $\mathscr{D}_1 =_{\eta}^{\vee} \mathscr{D}_2$, then $\mathscr{D}_1^* =_{\eta}^{\supset2} \mathscr{D}_2^*$, actually not even $\mathscr{D}_1^* =_{\beta\eta}^{\supset2} \mathscr{D}_2^*$. Similarly, it is not the case that whenever  $\mathscr{D}_1 =_{\gamma}^{\vee}\mathscr{D}_2$, then $\mathscr{D}_1^* =_{\beta\eta}^{\supset2} \mathscr{D}_2^*$. To see this, it is enough to consider any instance of \eqref{etaor} and \eqref{gammaorG} in which the derivation $\D D$ of the major premise of the application of $\vee$E constituting the redex consists of the sole assumption of $A\vee B$.

In spite of this, it is possible to show that particular classes of instances of \eqref{etaor} and \eqref{gammaorG} are preserved by the RP-translation, in particular, those instances in which the derivation of the major premise of the application of $\vee$E displayed in the equation schemata is closed (we will refer to such instances as {\em m-closed}). The equivalence relations induced by these instances of \eqref{etaor} and \eqref{gammaorG} will be indicated with $=_{\eta^{mc}}^{\vee}$ and $=_{\gamma_g^{mc}}^{\vee}$.

To prove this fact we rely on a restricted form of normalization for \Ndv\ (namely, that any derivation $\mathscr{D}$ can be $\beta$-reduced to a $\beta$-normal derivation $\tilde{\mathscr{D}}$) and on the following:

\begin{proposition}\label{closenormintro} All closed $\beta$-normal \Ndv-derivations end with an application of an introduction rule.
 \end{proposition}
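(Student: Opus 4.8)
The plan is to argue by contradiction, using the notion of the \emph{main branch} of a derivation (what Prawitz calls a \emph{track} running through major premises). Let $\mathscr{D}$ be a closed $\beta$-normal \Ndv-derivation. Since a bare assumption is not closed, $\mathscr{D}$ has a last inference; suppose for contradiction that it is an elimination, i.e.\ one of $\forall$E, $\supset$E, $\vee$E. Starting from the conclusion, I would trace a path upward through the derivation tree by repeatedly passing from the conclusion of each elimination to its \emph{major} premise ($\forall X A$ for $\forall$E, $A\supset B$ for $\supset$E, $A\vee B$ for $\vee$E). Since $\mathscr{D}$ is a finite tree this path is finite; write $M$ for the formula occurrence at its top. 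By construction $M$ is not the conclusion of an elimination (otherwise the path would continue through its major premise), so $M$ is either a leaf (an assumption) or the conclusion of an introduction rule. The key observation I would record is that this path is exactly the unique tree-path from the conclusion of $\mathscr{D}$ to $M$, and that every inference whose conclusion lies strictly below $M$ on it is an elimination.

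First I would dispose of the case where $M$ is the conclusion of an introduction rule. Then the inference immediately below $M$ on the main branch is an elimination having $M$ as its major premise. Because the shape of the major premise of each elimination determines the connective, that elimination is precisely the one matching the introduction concluding $M$; hence $M$ exhibits an introduction immediately followed by the corresponding elimination on its major premise. This is exactly a $\beta$-redex (of type \eqref{beta2}, \eqref{betaimp}, or \eqref{betaor}), contradicting the $\beta$-normality of $\mathscr{D}$.

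It remains to exclude $M$ being an assumption. Since $\mathscr{D}$ is closed, $M$ must be discharged by some inference below it, and the only discharging inferences are $\supset$I (discharging the antecedent in the derivation of its premise) and $\vee$E (discharging a case hypothesis in one of its \emph{minor} premise derivations). In either case the discharging inference is an ancestor of $M$ toward the root, hence lies on the tree-path from the conclusion to $M$, i.e.\ on the main branch. If it is an $\supset$I, this contradicts the fact that every inference below $M$ on the main branch is an elimination. If it is a $\vee$E, then the branch leaves that $\vee$E through its \emph{major} premise, so $M$ lies in the major-premise subderivation; but discharge of $M$ by this $\vee$E places $M$ in one of its \emph{minor}-premise subderivations, which is impossible. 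Either way I reach a contradiction, so $M$ cannot be an assumption. Having excluded both possibilities for $M$, the last inference of $\mathscr{D}$ cannot be an elimination, which proves the proposition.

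The step I expect to be the main obstacle is the discharge analysis in the assumption case. The delicate point is to argue rigorously that the inference discharging the topmost assumption $M$ must lie on the main branch — which rests on identifying the main branch with the unique path from the conclusion to $M$ — and then to exploit the \emph{distinction between major and minor premises} of $\vee$E to rule out the disjunction subcase. This is the one place where normality and closedness are genuinely combined, and the only place where the three-premise shape of $\vee$E (as opposed to the two-premise eliminations) needs separate care.
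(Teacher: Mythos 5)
Your proof is correct, and it reaches the proposition by a genuinely different route from the paper's. The paper argues by induction on the number of elimination rules: if $\mathscr{D}$ ended with an elimination, its major-premise subderivation $\mathscr{D}'$ would again be closed and $\beta$-normal with fewer eliminations, so by the induction hypothesis $\mathscr{D}'$ ends with an introduction, producing a $\beta$-redex at the last inference --- contradiction. You replace the induction by an explicit traversal of the main branch (Prawitz's track through major premises) and a case analysis at its topmost formula occurrence $M$: an introduction there yields a redex, and an assumption there is excluded by the discharge analysis. Both proofs hinge on the same two facts --- that an introduction immediately above the major premise of an elimination is a $\beta$-redex, and that eliminations never discharge assumptions in their major-premise subderivations --- but they distribute the work differently. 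In the paper the second fact is used silently, in the single clause asserting that $\mathscr{D}'$ is still closed (for $\vee$E this holds precisely because discharge happens only in the minor-premise subderivations); in your proof it is made fully explicit in the final discharge analysis, which you rightly single out as the one place where closedness and the major/minor distinction genuinely interact. What the paper's induction buys is brevity; what your track argument buys is that the use of closedness is localized to ruling out an assumption at the top of the branch, which is the form of the argument that generalizes to open derivations (where the same traversal shows the main branch of a normal derivation starts at an undischarged assumption, the standard route to subformula-style results).
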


\begin{proof}The proof is by induction on the number of applications of elimination rules in a closed $\beta$-normal derivation $\mathscr{D}$.
\end{proof}

A consequence of this is that the m-closed instances of \eqref{gammaorG} and \eqref{etaor} are contained in the equational theory $=_{\beta}^{2\supset\vee}$: 

\begin{proposition}
\label{gammaclose}
If $\mathscr{D}'=_{\gamma_g^{mc}}^{\vee}\mathscr{D}''$ then $\mathscr{D}'=_{\beta}^{2\supset\vee}\mathscr{D}''$

\end{proposition}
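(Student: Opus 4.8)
The plan is to combine Proposition~\ref{closenormintro} with the restricted normalization for \Ndv{} so as to resolve an m-closed generalized permutation into an ordinary $\beta\vee$-reduction, after which both sides of the equation collapse to a common derivation. The key observation is that in an m-closed instance of \eqref{gammaorG} the derivation $\D D$ of the major premise $A\vee B$ is closed. By restricted normalization I may $\beta$-reduce it to a $\beta$-normal derivation $\tilde{\D D}$, so that $\D D=_{\beta}^{2\supset\vee}\tilde{\D D}$. Being closed and $\beta$-normal, $\tilde{\D D}$ ends with an introduction rule by Proposition~\ref{closenormintro}; since its conclusion is $A\vee B$, this rule is either $\vee$I$_1$ or $\vee$I$_2$. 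I would treat the $\vee$I$_1$ case, the other being symmetric: thus $\tilde{\D D}$ consists of a closed derivation $\D E$ of $A$ followed by an application of $\vee$I$_1$.

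Next I would use that $=_{\beta}^{2\supset\vee}$ is a congruence to replace $\D D$ by $\tilde{\D D}$ on both sides of \eqref{gammaorG} without altering their $\beta$-equivalence classes; this replacement cannot interfere with the discharges labelled $n$ and $m$ precisely because $\D D$ is closed, and hence shares no open assumption with the minor premises. In the modified left-hand side the new application of $\vee$I$_1$ now feeds directly into $\vee$E, creating a $\beta\vee$-redex; reducing it according to \eqref{betaor} substitutes $\D E$ into the $A$-branch, so the entire left-hand side $\beta$-reduces to $\D E$ followed by $\D D_1$ and then by $\D D_3$. Performing the same reduction on the modified right-hand side, the redex again selects the first minor premise, whose derivation is $\D D_1$ followed by its own copy of $\D D_3$; substituting $\D E$ for the discharged assumption $A$ yields exactly $\D E$ followed by $\D D_1$ and then $\D D_3$. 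Since both sides therefore reduce to one and the same derivation, chaining the equalities gives $\D D'=_{\beta}^{2\supset\vee}\D D''$.

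I do not expect a genuine obstacle here, since the whole content of the argument is that the second, redundant copy of $\D D_3$ appearing on the right-hand side of \eqref{gammaorG} is discarded by the $\beta\vee$-reduction, which retains only the branch matching the introduction rule isolated in $\tilde{\D D}$. The only points demanding minimal care are the bookkeeping of assumption labels when $\tilde{\D D}$ is substituted back into the two derivation schemata, and the routine verification of the symmetric $\vee$I$_2$ case; the analogous treatment of the m-closed instances of \eqref{etaor} then follows by the same normalization argument applied to the major premise.
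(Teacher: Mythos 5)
Your proposal is correct and follows essentially the same route as the paper's own proof: $\beta$-normalize the closed derivation of the major premise, invoke Proposition~\ref{closenormintro} to conclude it ends in $\vee$I$_i$, and then observe that both sides of the resulting instance of \eqref{gammaorG} collapse under $\beta{\vee}$-reduction to the common derivation obtained by composing the premise derivation with $\mathscr{D}_i$ and $\mathscr{D}_3$. The only difference is presentational: the paper displays the two reduced derivations and calls them ``clearly $\beta$-equivalent,'' whereas you carry out the final $\beta{\vee}$-step explicitly.
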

\begin{proof}
Consider an instance of \eqref{gammaorG} in which the derivation ${\D D}$ of $A\vee B$ is {\em closed}. Call $\D D'$ and $\mathscr{D}''$ the left-hand side and right-hand side of this instance of \eqref{gammaorG}. We show that $\mathscr{D}'=^{\supset2\vee}_{\beta} \mathscr{D}''$.

Since $\D D$ is closed, it can be $\beta$-reduced into a $\beta$-normal derivation $\D D^{\sharp}$ which (by proposition \ref{closenormintro}) consists of a derivation ${\D D^\sharp}'$ of either $A$ or $B$ to which an application of one of the introduction rules is appended. If we assume that  the rule applied is $\vee$I$_1$ (the alternative case is similar), the two members of the $\gamma_g$-equation $\beta$-reduce (respectively) to the following derivations:
\begingroup\makeatletter\def\f@size{10}\check@mathfonts$$\AXC{${\mathscr{D}^\sharp}'$}\noLine\UIC{$A$}\RightLabel{\footnotesize$\vee $I$_{1}$}\UIC{$A\vee B$}\AXC{$\stackrel{n}{[A]}$}\noLine\UIC{$\mathscr{D}_1$}\noLine\UIC{$C$}\AXC{$\stackrel{m}{[B]}$}\noLine\UIC{$\mathscr{D}_2$}\noLine\UIC{$C$}\RightLabel{\footnotesize $\vee$E $(n,m)$}\TIC{$[C]$}\noLine
\UnaryInfC{$\D D_3$}
\noLine
\UnaryInfC{$ D$}
\DP \qquad \AXC{${\D D^\sharp}'$}\noLine\UIC{$A$}\RightLabel{\footnotesize$\vee$I$_{1}$}\UIC{$A\vee B$}\AXC{$\stackrel{n}{[A]}$}\noLine\UIC{$\mathscr{D}_1$}\noLine\UIC{$[C]$}\noLine
\UnaryInfC{$\D D_3$}
\noLine
\UnaryInfC{$ D$}\AXC{$\stackrel{m}{[B]}$}\noLine\UIC{$\mathscr{D}_2$}\noLine\UIC{$[C]$}\noLine
\UnaryInfC{$\D D_3$}
\noLine
\UnaryInfC{$ D$}
\RightLabel{\footnotesize $\vee$E $(n,m)$}\TIC{$D$}\DP$$\endgroup
which are clearly $\beta$-equivalent.
\end{proof}

\begin{proposition}
\label{etaclose}
If $\mathscr{D}'=_{\eta^{mc}}^{\vee}\mathscr{D}''$ then $\mathscr{D}'=_{\beta}^{2\supset\vee}\mathscr{D}''$
\end{proposition}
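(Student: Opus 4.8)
The plan is to follow exactly the strategy of the proof of Proposition~\ref{gammaclose}, exploiting the closedness of the major-premise derivation in order to expose a $\beta\lor$-redex that is hidden inside the $\eta\lor$-redex. Consider an m-closed instance of \eqref{etaor}, i.e.\ one in which the derivation $\mathscr{D}$ of the major premise $A\vee B$ of the $\vee$E application is closed. Call $\mathscr{D}'$ the left-hand side (the $\eta\lor$-redex) and $\mathscr{D}''=\mathscr{D}$ the right-hand side; the goal is to show $\mathscr{D}'=_{\beta}^{\supset2\vee}\mathscr{D}''$. Since $\mathscr{D}$ is closed, by the restricted normalization result it $\beta$-reduces to a $\beta$-normal derivation $\mathscr{D}^{\sharp}$, and by Proposition~\ref{closenormintro} this $\mathscr{D}^{\sharp}$ ends with an application of an introduction rule. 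As its conclusion is $A\vee B$, that rule must be either $\vee$I$_1$ or $\vee$I$_2$, so $\mathscr{D}^{\sharp}$ consists of a derivation ${\mathscr{D}^{\sharp}}'$ of $A$ (resp.\ of $B$) followed by $\vee$I$_1$ (resp.\ $\vee$I$_2$).

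Assume the rule applied is $\vee$I$_1$ (the other case is symmetric). Replacing $\mathscr{D}$ by its $\beta$-normal form $\mathscr{D}^{\sharp}$ inside the $\eta\lor$-redex is a $\beta$-reduction of $\mathscr{D}'$; the resulting derivation now has, as major premise of its outermost $\vee$E, the formula $A\vee B$ inferred by $\vee$I$_1$ from ${\mathscr{D}^{\sharp}}'$, and hence contains a $\beta\lor$-redex. Firing this \eqref{betaor}-reduction selects the first minor-premise derivation of the $\eta\lor$-redex — namely the trivial derivation consisting of the assumption $\stackrel{n}{[A]}$ followed by $\vee$I$_1$ — and substitutes ${\mathscr{D}^{\sharp}}'$ for its discharged assumption $\stackrel{n}{A}$. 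This yields precisely ${\mathscr{D}^{\sharp}}'$ followed by $\vee$I$_1$, that is, $\mathscr{D}^{\sharp}$ itself. Thus $\mathscr{D}'$ $\beta$-reduces to $\mathscr{D}^{\sharp}$. Since $\mathscr{D}''=\mathscr{D}$ also $\beta$-reduces to $\mathscr{D}^{\sharp}$ by construction, the two sides are both $\beta$-equivalent to $\mathscr{D}^{\sharp}$, and therefore to each other, as desired.

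The only genuinely load-bearing step — and the one that fails for unrestricted \eqref{etaor} — is the appeal to Proposition~\ref{closenormintro}, which is available precisely because $\mathscr{D}$ is closed. Were $\mathscr{D}$ open, its normal form could end with an elimination rule (in the extreme case $\mathscr{D}$ is simply the assumption $A\vee B$), so no $\beta\lor$-redex would ever be exposed and the surrounding $\vee$E could not be discharged; this is exactly the situation flagged earlier, in which the RP-translation fails to preserve $\eta$-equivalence. I therefore expect no serious obstacle beyond faithfully tracking the assumption discharged by the $\beta\lor$-reduction, since the argument is a near-verbatim adaptation of the proof of Proposition~\ref{gammaclose}.
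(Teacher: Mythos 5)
Your proof is correct and follows essentially the same route as the paper's: $\beta$-normalize the closed derivation $\mathscr{D}$ of the major premise, invoke Proposition~\ref{closenormintro} to conclude that its normal form ends in $\vee$I$_i$, and observe that this exposes a \eqref{betaor}-redex whose firing collapses the $\eta\lor$-redex onto $\mathscr{D}^{\sharp}$. The only difference is that you spell out explicitly the final $\beta\lor$-step that the paper compresses into ``which are clearly $\beta$-equivalent.''
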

\begin{proof}
Consider an instance of \eqref{etaor} in which ${\D D}$ is {\em closed}. Call $\D D'$ and $\mathscr{D}''$ the left-hand side and right-hand side of this instance of \eqref{etaor}. We show $\mathscr{D}'=^{2\supset\vee}_{\beta} \mathscr{D}''$.

As in the proof of the previous proposition, the two members of the $\eta$-equation $\beta$-reduce (respectively) to the following derivations:
\begingroup\makeatletter\def\f@size{10}\check@mathfonts
\begin{equation*}\def\defaultHypSeparation{\hskip .1in}
\AXC{${\mathscr{D}^\sharp}'$}\noLine\UIC{$A$}\RightLabel{\footnotesize$\vee$I$_1$}\UIC{$A\vee B$}\AXC{$\stackrel{n}{A}$}\RightLabel{\footnotesize $\vee$I}\UIC{$A\vee B$}\AXC{$\stackrel{m}{B}$}\RightLabel{\footnotesize $\vee$I}\UIC{$A\vee B$}\RightLabel{\footnotesize $\vee$E $(n,m)$}\TIC{$A\vee B$}\DP\ \qquad\qquad \ \AXC{${\mathscr{D}^\sharp}'$}\noLine\UIC{$A$}\RightLabel{\footnotesize$\vee$I$_1$}\UIC{$A\vee B$}\DP\end{equation*}
\endgroup
which are clearly $\beta$-equivalent.
\end{proof}

\begin{remark}\label{eta2imp-pres}
Propositions analogous to \ref{gammaclose} and \ref{etaclose} hold for the m-closed instances of \eqref{eta2} and \eqref{etaimp} as well. 
\end{remark}

As by Propositions \ref{gammaclose}  and  \ref{etaclose} the m-closed instances of \eqref{gammaorG} and of \eqref{etaor} are included in the equational theory $=_{\beta}^{2\supset\vee}$, and moreover 
by Proposition \ref{betapres} the RP-translation maps $\beta^{2\supset\vee}$-equivalent derivations in \Ndv{} into $\beta^{2\supset}$-equivalent derivations in \Nd,  we have the following:

\begin{corollary}[$=_{\gamma_g^{mc}}^{\vee}\ \stackrel{*}{\mapsto}\ =_{\beta}^{2\supset}$]\label{mc-gammapres}
If $\mathscr{D}_1 =_{\gamma_g^{mc}}^{2\supset\vee} \mathscr{D}_2$ then $\mathscr{D}_1^* =_\beta^{2\supset} \mathscr{D}_2^*$.
\end{corollary}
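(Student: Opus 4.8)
The plan is to read the corollary off as the composition of the two inclusions established immediately above it, namely Proposition~\ref{gammaclose} and Proposition~\ref{betapres}. Proposition~\ref{gammaclose} shows that every m-closed generalized permutation collapses, already in the full language, into a $\beta$-equivalence, i.e. $=_{\gamma_g^{mc}}^{\vee}\ \subseteq\ =_{\beta}^{2\supset\vee}$; Proposition~\ref{betapres} shows that the RP-translation sends $=_{\beta}^{2\supset\vee}$ into $=_{\beta}^{2\supset}$. Chaining these two facts is exactly what the statement requires, so the corollary itself carries no new combinatorial content: the work has already been done in the closed-$\beta$-normal-form argument of Proposition~\ref{gammaclose} (which rests on Proposition~\ref{closenormintro}) and in the explicit redex computation of Proposition~\ref{betapres}.

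Concretely, I would argue as follows. Assume $\mathscr{D}_1 =_{\gamma_g^{mc}}^{2\supset\vee} \mathscr{D}_2$. By Proposition~\ref{gammaclose} every m-closed instance of \eqref{gammaorG} is contained in the equational theory $=_{\beta}^{2\supset\vee}$, hence $\mathscr{D}_1 =_{\beta}^{2\supset\vee} \mathscr{D}_2$. Applying Proposition~\ref{betapres} to this equivalence then yields $\mathscr{D}_1^* =_{\beta}^{2\supset} \mathscr{D}_2^*$, which is the claim.

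The only point that needs care --- and which I regard as the main, though minor, obstacle --- is the passage from individual conversion instances to the full equivalence relation $=_{\gamma_g^{mc}}^{2\supset\vee}$. A witness of $\mathscr{D}_1 =_{\gamma_g^{mc}}^{2\supset\vee} \mathscr{D}_2$ is, by definition, a finite chain of m-closed instances of \eqref{gammaorG}, each possibly applied inside a larger derivation context, closed under substitution. Proposition~\ref{gammaclose} treats one such instance at top level; to lift it I would use that $=_{\beta}^{2\supset\vee}$ is itself a congruence closed under substitution (being defined as exactly such a closure), so that a single m-closed permutation step remains a $\beta$-equivalence when embedded in any context, and that chains of steps compose by transitivity. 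The same congruence-and-transitivity remark applies on the \Nd-side when invoking Proposition~\ref{betapres}, so no further inspection of the RP-translation is required beyond what that proposition already supplies; in particular the disappearance of $\vee$ under the encoding $(A\vee B)^{*}\equiv \forall X((A^{*}\supset X)\supset (B^{*}\supset X)\supset X)$ is absorbed into Proposition~\ref{betapres}, and the intermediate $\vee$-occurrences on the \Ndv-side cause no difficulty.
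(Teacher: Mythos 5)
Your proof is correct and takes essentially the same route as the paper, which also obtains the corollary by chaining Proposition~\ref{gammaclose} (m-closed instances of \eqref{gammaorG} are contained in $=_{\beta}^{2\supset\vee}$) with Proposition~\ref{betapres} (the RP-translation preserves $\beta$-equivalence). Your additional remark about lifting single top-level instances to the substitution-closed congruence is a detail the paper leaves implicit, but it does not alter the argument.
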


\begin{corollary}[$=_{\eta^{mc}}^{\vee}\ \stackrel{*}{\mapsto}\ =_{\beta}^{2\supset}$]\label{mc-etapres}
If $\mathscr{D}_1 =_{\eta^{mc}}^{2\supset\vee} \mathscr{D}_2$ then $\mathscr{D}_1^* =_\beta^{2\supset} \mathscr{D}_2^*$.
\end{corollary}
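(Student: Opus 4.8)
The plan is to establish this corollary in exactly the way Corollary~\ref{mc-gammapres} follows from Propositions~\ref{gammaclose} and~\ref{betapres}: by composing the inclusion of the relevant m-closed $\eta$-instances into the $\beta$-theory of the full language with the $\beta$-preservation of the RP-translation. In symbols, I would first show $=_{\eta^{mc}}^{2\supset\vee}\ \subseteq\ =_\beta^{2\supset\vee}$ and then feed this into Proposition~\ref{betapres}.

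First I would recall that, by the notational convention of Section~\ref{sec-prel}, $=_{\eta^{mc}}^{2\supset\vee}$ is the substitution- and context-closed equivalence relation generated by the m-closed instances of the $\eta$-equations. For the $\vee$-case these generators are precisely the pairs treated in Proposition~\ref{etaclose}, which shows each of them to be a $=_\beta^{2\supset\vee}$-equality; the analogous generators for \eqref{etaimp} and \eqref{eta2} are supplied by Remark~\ref{eta2imp-pres}. Hence every generating pair of $=_{\eta^{mc}}^{2\supset\vee}$ already lies in $=_\beta^{2\supset\vee}$.

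Next I would use that $=_\beta^{2\supset\vee}$ is itself a congruence closed under substitution, so it contains the whole equivalence relation generated by those pairs: a chain of m-closed $\eta$-steps linking $\mathscr{D}_1$ to $\mathscr{D}_2$ turns, step by step, into a chain of $\beta$-equalities (embedding each step inside its surrounding derivation context, and taking any substitution instance, preserves $=_\beta^{2\supset\vee}$), and transitivity then yields $\mathscr{D}_1 =_\beta^{2\supset\vee} \mathscr{D}_2$. A final application of Proposition~\ref{betapres} converts this into $\mathscr{D}_1^* =_\beta^{2\supset} \mathscr{D}_2^*$, which is the claim.

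I do not expect a genuine obstacle here, since the corollary is a bookkeeping composition of results already in hand; its mathematical content is entirely concentrated in Proposition~\ref{etaclose} (and its analogues in Remark~\ref{eta2imp-pres}), which in turn rest on the restricted normalization for \Ndv{} and on Proposition~\ref{closenormintro}. The only point that warrants an explicit line of justification is the passage from ``each single m-closed $\eta$-instance is $\beta$-equivalent'' to the inclusion of the entire congruence $=_{\eta^{mc}}^{2\supset\vee}$ in $=_\beta^{2\supset\vee}$, i.e.\ checking that placing such an instance in an arbitrary context and substituting for propositional variables keeps the two sides $\beta$-equivalent; this is immediate from the fact that $=_\beta^{2\supset\vee}$ is defined as a congruence closed under substitution.
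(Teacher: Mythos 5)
Your proposal is correct and is exactly the paper's own argument: the corollary is obtained by composing Proposition~\ref{etaclose} (with Remark~\ref{eta2imp-pres} for the \eqref{etaimp} and \eqref{eta2} cases), which places the m-closed $\eta$-instances inside $=_{\beta}^{2\supset\vee}$, with Proposition~\ref{betapres}, which transports $\beta$-equivalence through the RP-translation. Your extra remark that the congruence/substitution closure of the generators stays inside $=_{\beta}^{2\supset\vee}$ is a point the paper leaves implicit, but it is the same route.
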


\begin{remark}
A proposition analogous to \ref{gammaclose} and \ref{etaclose} and a corollary analogous to \ref{mc-gammapres} and \ref{mc-etapres} can be established for the so-called simplification conversions \cite[\S II.3.3.2.1]{Prawitz1971}, which are the left-to-right orientations of the instances of the following equation schema:
\begingroup\makeatletter\def\f@size{10}\check@mathfonts
\begin{equation*}\tag{$\sigma\vee$}
\AXC{$\mathscr{D}$}\noLine\UIC{$A_1\vee A_2$}\AXC{$\mathscr{D}_1$}\noLine\UIC{$C$}\AXC{$\mathscr{D}_2$}\noLine\UIC{$C$}\RL{\footnotesize $\vee$E}\TIC{$C$}\DP\quad =_\sigma\quad \mathscr{D}_i
\end{equation*}
$$\text{provided the displayed application of $\vee$E discharges no assumption in $\mathscr{D}_i$}$$
\endgroup
\end{remark}

In the next sections we will show that the corollaries \ref{mc-etapres} and \ref{mc-gammapres} are instances of a more general phenomenon, namely the \emph{naturality} of natural deduction derivations.

\section{The naturality of \Nd-derivations}\label{sec-nat}

In this section we introduce a naturality condition for \Nd-derivations, well-known from categorial approaches to logic, in purely proof-theoretic terms. 

Besides relying on the notion of substitution of a formula $A$ for a variable $X$ both within a formula $C$, $C\ldbrack A/X\rdbrack$, and within a derivation $\mathscr{D}$, $\mathscr{D}\ldbrack A/X\rdbrack$, we will need a further notion in order to present naturality in proof-theoretic terms: when $C$ is a positive (respectively negative) formula, $X$ is a variable and $\mathscr{D}$ is a derivation of $B$ from (undischarged) assumptions $A, \Delta$, the {\em $C$-expansion of $\mathscr{D}$ relative to $X$}, will be defined as a particular derivation, to be indicated with $\SF{X}{C}{\mathscr{D}}$, of  $C\ldbrack B/X\rdbrack$ from $C\ldbrack A/X\rdbrack$ and $\Delta$ (resp.~of $C\ldbrack A/X\rdbrack$ from $C\ldbrack B/X\rdbrack$ and $\Delta$).

In section~\ref{spx-ss}, after addressing a few remarks on substitution, we introduce the notion of $X$-safety and of positive and negative formulas and derivations that we will use in the definition of $C$-expansion. We devote section~\ref{cexp-ss} to the latter notion and its properties. Finally we introduce the naturality condition in section~\ref{nat-ss}.

Unless explicitly stated, for the rest of section~\ref{sec-nat} when we speak of derivations we mean \Nd-derivations. 

\subsection{$X$-safety, positive and negative formulas and derivations}\label{spx-ss}
For simplicity, we will identify formulas and derivations which can be obtained from each other by renaming the bound variables. Hence, 

\begin{remark}\label{alfa}We assume the following:
\begin{enumerate}
 \item Substitution is always defined, though, in some cases, the substitution $(\forall Y F) \llbracket A/X\rrbracket= \forall Y' (F'\llbracket A/X\rrbracket)$ might require a renaming of the bound variables of $F$.
\item Given two derivations $\D D$ and $\D D'$, no bound variables in $\D D$  occurs free in $\D D'$.
\end{enumerate}
\end{remark}

These assumptions will be needed in the proof of the main theorem of this section. This, as well as most results in this section below, will be shown for particular classes of derivation which are defined relative to the choice of a particular variable:

\begin{definition}[X-safety]An \Nd-derivation $\D D$ is \emph{$X$-safe}, if, for all applications {\small\AXC{$\forall Y A$}\RightLabel{\footnotesize$\forall$E}\UIC{$A\llbracket B/Y\rrbracket$}\DP} of $\forall$E in $\D D$, $X$ does not occur free in $B$.
\end{definition}

\begin{remark} $X$-safety is preserved by $\beta$-reduction, i.e.~if $\mathscr{D}$ $\beta$-reduces to $\D D'$ and $\D D$ is $X$-safe, so is $\D D'$. 
\end{remark}

\begin{remark}\label{epsfree}
For any \Ndv-derivation $\D D$  there is always a variable $X$ such that the RP-translation $\D D^{*}$ of $\D D$ is $X$-safe.
\end{remark}

Before introducing $C$-expansions, we recall the definition of positive and negative formulas and extended it to derivations. 

\begin{definition}[Positive and negative formulas and derivations]\label{sp-qsp}Given a variable $X$, a formula $C$ is {\em positive in $X$} ({\em negative in $X$}), abbreviated p-$X$ (n-$X$) iff:
\begin{itemize}
\item $C\equiv Z$, where, possibly, $Z=X$ (resp.~$C\equiv Z$ with $Z\neq X$);
\item $C\equiv A\supset B$, provided $A$ is n-$X$ (resp.~p-$X$) and $B$ is p-$X$ (resp.~n-$X$);
\item $C\equiv \forall Y A$, provided $A\llbracket Z/Y\rrbracket $ is p-$X$ (resp.~n-$X$) for $Z\nequiv X$.
\end{itemize}

A formula is said pn-$X$ if it is either p-$X$ or n-$X$.
A derivation $\mathscr{D}$ of $C$ from $\Gamma$ is p-$X$ (resp.~n-$X$, pn-$X$) iff all formulas in $\Gamma$ and $C$ are p-$X$ (resp.~n-$X$, pn-$X$). 
\end{definition}

In order to prove the main theorem of this section we need the following proposition, whose proof is given in appendix \ref{appendino}:

\begin{proposition}\label{normalspx}
If $\mathscr{D}$ is $X$-safe, $\beta$-normal and pn-$X$, all formulas occurring in $\mathscr{D}$ are either p-$X$ or n-$X$. 
\end{proposition}

\subsection{The $C$-expansion of a derivation}\label{cexp-ss}

Fixed a variable $X$, we now show how, for any p-$X$ formula $C$, a derivation of $B$ from $A,\Delta$ can be ``expanded'' into a derivation of $C\llbracket B/X\rrbracket$  from $C\llbracket A/X\rrbracket,\Delta$, to be called the $C$-expansion of $\D D$ relative to $X$, which we indicate as $\SF{X}{C}{\mathscr{D}}$ (thereby leaving the fixed variable $X$ implicit).

Similarly, for any n-$X$ formula $D$, the $D$-expansion of a derivation $\D D$ of $B$ from $A$ will be a derivation of $D\llbracket A/X\rrbracket$  from $D\llbracket B/X\rrbracket,\Delta$ (observe that the role of $A$ and $B$ in assumptions and conclusions is inverted), which we likewise indicate as $\SF{X}{D}{\mathscr{D}}$.

We will use the {\em same} notation also for the substitution of $A$ for $X$ in $C$ writing $\SF{X}{C}{A}$ for $C\llbracket A/X\rrbracket$. Beside saving brackets, this uniform notation highlights the functorial nature of pn-$X$ formulas, that is the fact that to each p-$X$ (resp.~n-$X$) formula $C$ one can associate a functor $\SF{X}{C}{\cdot}$ that applied to a formula $A$ yields the formula $C\llbracket A/X\rrbracket \equiv \SF{X}{C}{A}$ as value and applied to a derivation $\mathscr{D}$ of $B$ from $A,\Delta$ yields a derivation $\SF{X}{C}{\mathscr{D}}$ of $C\llbracket B/X\rrbracket$  from $C\llbracket A/X\rrbracket,\Delta$ (resp.~of $C\llbracket A/X\rrbracket$ from  $C\llbracket B/X\rrbracket, \Delta$) as value. 

Leaving the assumptions in $\Delta$ implicit, we thus have that, for $C$ p-$X$ and $D$ n-$X$, 
\begingroup\makeatletter\def\f@size{10}\check@mathfonts
$${\small\SF{X}{C}{\mathscr{D}} \equiv {\small \def\extraVskip{0pt}\AXC{$C\llbracket A/X\rrbracket$}\noLine\UIC{$\SF{X}{C}{\makebox[2em][c]{$\mathscr{D}$}}$}\noLine\UIC{$C\llbracket B/X\rrbracket$}\DP} \equiv {\small \def\extraVskip{0pt}\AXC{$\SF{X}{C}{A}$}\noLine\UIC{$\SF{X}{C}{\mathscr{D}}$}\noLine\UIC{$\SF{X}{C}{B}$}\DP}
\qquad
\SF{X}{D}{\mathscr{D}} \equiv {\small \def\extraVskip{0pt}\AXC{$D\llbracket B/X\rrbracket$}\noLine\UIC{$\SF{X}{D}{\makebox[2em][c]{$\mathscr{D}$}}$}\noLine\UIC{$D\llbracket A/X\rrbracket$}\DP} \equiv {\small \def\extraVskip{0pt}\AXC{$\SF{X}{D}{B}$}\noLine\UIC{$\SF{X}{D}{\mathscr{D}}$}\noLine\UIC{$\SF{X}{D}{A}$}\DP}
}$$
\endgroup
which we sometimes shorten further to 
\begingroup\makeatletter\def\f@size{10}\check@mathfonts
$$
\cexpD{X}{C}{\AXC{$A$}\noLine\UIC{$\mathscr{D}$}\noLine\UIC{$B$}\DP}
\qquad\qquad
\cexpD{X}{D}{\AXC{$A$}\noLine\UIC{$\mathscr{D}$}\noLine\UIC{$B$}\DP}
$$
\endgroup
(In the case of the n-$X$ formula $D$, we warn again the reader that the assumptions of the $D$-expansion of $\D D$ are $D\llbracket B/X\rrbracket, \Delta$ and not, as the notation may suggest, $D\llbracket A/X\rrbracket, \Delta$, and similarly for the conclusion.)

If $\Gamma$ is the multiset of formulas $C_{1},\dots, C_{n}$, then by $\Gamma\ldbrack A/X\rdbrack$ we indicate the multiset of formulas $C_1\ldbrack A/X\rdbrack,\ldots, C_n\ldbrack A/X\rdbrack$. Likewise, we use $\SF{X}{\Gamma}{A}$ for $\SF{X}{C_1}{A},\ldots, \SF{X}{C_n}{A}$ and $\SF{X}{\Gamma}{\mathscr{D}}$ for $\SF{X}{C_1}{\mathscr{D}},\ldots,\SF{X}{C_n}{\mathscr{D}}$. 

\begin{remark} The following basic facts about substitution:
\begin{enumerate}
 \item If $X$ does not occur in $C$, then  $C\llbracket A/X\rrbracket \equiv C$ for all $A$;
\item For all $A$, $X$ and $C$,  $(\forall X C) \llbracket A/X\rrbracket \equiv \forall X C$;
\item If $X$ does not occur in $C$, and $Y$ does not occur in $A$, then  for all $F$ 
\begingroup\makeatletter\def\f@size{10}\check@mathfonts
$$F\llbracket C /Y\rrbracket \llbracket A /X\rrbracket \equiv F \llbracket A /X\rrbracket \llbracket C /Y\rrbracket$$
\endgroup
\end{enumerate}
will be therefore expressed as follows:
\begin{enumerate}
 \item If $X$ does not occur in $C$, then for all $A$ $\SF{X}{C}{A} \equiv C$;
 \item For all $A$, $X$ and $C$, $\SF{X}{\forall X C}{A} \equiv \forall X C$;
\item If $X$ does not occur in $C$, nor $Y$ in $A$, then for all $F$  
\begingroup\makeatletter\def\f@size{10}\check@mathfonts
$$\SF{X}{F\llbracket C /Y\rrbracket }{A} \equiv \SF{X}{F}{A}\llbracket C /Y\rrbracket$$
\endgroup
\end{enumerate}
\end{remark}

\begin{definition}[$C$-expansion of $\mathscr{D}$ relative to $X$]\label{cexp}
If $C$ is p-$X$ (resp.~n-$X$) and $\mathscr{D}$ is a derivation of $B$ from undischarged assumptions $A,\Delta$, we call the {\em $C$-expansion of $\mathscr{D}$ relative to $X$}, notation $\SF{X}{C}{\D D}$, the derivation  of $C\ldbrack B/X\rdbrack$ from $C\ldbrack A/X\rdbrack, \Delta$ (resp.~of $C\ldbrack A/X\rdbrack$ from $C\ldbrack B/X\rdbrack, \Delta$) defined as follows: 
\begin{itemize}
 \item[A.] If $X$ does not occur in $C$, then $\SF{X}{C}{\mathscr{D}}$ just consists of the assumption of $C$.
\item[B.] Otherwise $\SF{X}{C}{\mathscr{D}}$ is defined by induction on $C$:
\begin{enumerate}
\item If $C\equiv X$ then $\SF{X}{C}{\mathscr{D}} \equiv \D D$; 
\item If $C \equiv \forall Y F$ then, as $C$ is p-$X$ (resp.~n-$X$), $F$ is p-$X$ (resp.~n-$X$) too, we define 
\begingroup\makeatletter\def\f@size{10}\check@mathfonts
\begin{lrbox}{\mypti}% Store prooftree in \mypti
\begin{varwidth}{\linewidth}
$\cexpD{X}{F}{\AXC{$A$}\noLine\UIC{$\mathscr{D}$}\noLine\UIC{$B$}\DP}$
\end{varwidth}
\end{lrbox}
$$
\SF{X}{\forall Y F}{\mathscr{D}} \equiv 
\AXC{$\forall Y\SF{X}{F}{A}$}
\RightLabel{\footnotesize$\forall$E}
\UIC{\usebox{\mypti}}
\RightLabel{\footnotesize$\forall$I}
\UIC{$\forall Y\SF{X}{F}{B}$}\DisplayProof 
\equiv
\AXC{$\SF{X}{\forall YF}{A}$}
\RightLabel{\footnotesize$\forall E$}
\UIC{\usebox{\mypti}}
\RightLabel{\footnotesize$\forall$I}
\UIC{$\SF{X}{\forall Y F}{B}$}
\DisplayProof
$$
$$\left ( \ \text{resp.~  }
\SF{X}{\forall Y F}{\mathscr{D}} \equiv 
\AXC{$\forall Y\SF{X}{F}{B}$}
\RightLabel{\footnotesize$\forall$E}
\UIC{\usebox{\mypti}}
\RightLabel{\footnotesize$\forall$I}
\UIC{$\forall Y\SF{X}{F}{A}$}\DisplayProof 
\equiv
\AXC{$\SF{X}{\forall YF}{B}$}
\RightLabel{\footnotesize$\forall E$}
\UIC{\usebox{\mypti}}
\RightLabel{\footnotesize$\forall$I}
\UIC{$\SF{X}{\forall Y F}{A}$}
\DisplayProof \ \right )
$$\endgroup
(where the substitution $(\forall Y F)[A/X]\equiv \forall Y' (F'[A/X])$ might require a renaming of the bound variables of $F$, cf.~remark~\ref{alfa}i.{} above).
\item If $C\equiv F\supset G$ then, $F$ is n-$X$ (resp.~p-$X$) and $G$ is p-$X$ (resp.~n-$X$), so we can define
\begingroup\makeatletter\def\f@size{10}\check@mathfonts
\begin{lrbox}{\mypti}% Store prooftree in \mypti
\begin{varwidth}{\linewidth}
$\cexpD{X}{G}{\AXC{$A$}\noLine\UIC{$\mathscr{D}$}\noLine\UIC{$B$}\DP}$
\end{varwidth}
\end{lrbox}
\begin{lrbox}{\mypto}% Store prooftree in \mypto
\begin{varwidth}{\linewidth}
$\cexpD{X}{F}{\AXC{$A$}\noLine\UIC{$\mathscr{D}$}\noLine\UIC{$B$}\DP}$
\end{varwidth}
\end{lrbox}
$$\SF{X}{C}{\mathscr{D}} \equiv \def\defaultHypSeparation{\hskip .45em}\AxiomC{$\begin{matrix} \ \\ \ \\  \SF{X}{F}{A}\supset\SF{X}{G}{A}\end{matrix}$}
\AxiomC{$\stackrel{n}{\usebox{\mypto}}$}
\RightLabel{\footnotesize$\supset$E}
\BinaryInfC{\usebox{\mypti}}
\RightLabel{\footnotesize$\supset$I $(n)$}
\UIC{$\SF{X}{G}{B}\supset \SF{X}{G}{B}$}
\DisplayProof\equiv
\AxiomC{$\begin{matrix} \ \\ \ \\  \SF{X}{F\supset G}{A}\end{matrix}$}
\AxiomC{$\stackrel{n}{\usebox{\mypto}}$}
\RightLabel{\footnotesize$\supset$E}
\BinaryInfC{\usebox{\mypti}}
\RightLabel{\footnotesize$\supset$I $(n)$}
\UIC{$\SF{X}{F\supset G}{B}$}
\DisplayProof$$
$$\left (\text{resp.~} \SF{X}{C}{\mathscr{D}} \equiv \def\defaultHypSeparation{\hskip .45em}\AxiomC{$\begin{matrix} \ \\ \ \\  F\supset\SF{X}{G}{B}\end{matrix}$}
\AxiomC{$\stackrel{n}{\usebox{\mypto}}$}
\RightLabel{\footnotesize$\supset$E}
\BinaryInfC{\usebox{\mypti}}
\RightLabel{\footnotesize$\supset$I $(n)$}
\UIC{$F\supset \SF{X}{G}{A}$}
\DisplayProof\equiv
\AxiomC{$\begin{matrix} \ \\ \ \\  \SF{X}{F\supset G}{B}\end{matrix}$}
\AxiomC{$\stackrel{n}{\usebox{\mypto}}$}
\RightLabel{\footnotesize$\supset$E}
\BinaryInfC{\usebox{\mypti}}
\RightLabel{\footnotesize$\supset$I $(n)$}
\UIC{$\SF{X}{F\supset G}{A}$}
\DisplayProof\right )$$
\endgroup	
\end{enumerate}
\end{itemize}
\end{definition}

\medskip
In the proof of the main theorem in section \ref{nat-ss} we will need the following:
\begin{lemma}\label{lemmasafe}
If $\mathscr{D}$ is a derivation of $B$ from $A,\Delta$ and $X$ does not occur in $F$, then for all $Y$ not occurring free in $\D D$, the result of substituting $F$ for $Y$ in the $C$-expansion of $\mathscr{D}$ (relative to $X$) is equal to the $C\ldbrack F/Y\rdbrack$-expansion of $\mathscr{D}$ (relative to $X$):  
\begingroup\makeatletter\def\f@size{10}\check@mathfonts
$$\SF{X}{C\ldbrack F/Y\rdbrack}{\mathscr{D}}\equiv \SF{X}{C}{\mathscr{D}}\ldbrack F/Y\rdbrack$$
\endgroup
\end{lemma}
\begin{proof}
By induction on $C$, by observing that, as $X$ does not occur in $F$, the derivation $\SF{X}{F}{\mathscr{D}}$  consists solely of the assumption of $F$.\end{proof}

\subsection{The naturality condition}\label{nat-ss}

The core insight from category theory is the following: the system \Nd can be seen as a syntactic category, whose objects are $\C L^{2}$ formulas and whose morphisms are \Nd derivations. Then, given a variable $X$ we can associate to any p-$X$ (resp.~n-$X$) formula $C$ a functor, whose application to a formula $A$ gives $\SF{X}{C}{A}$ as value, and whose application to a derivation $\mathscr{D}$ gives $\SF{X}{C}{\mathscr{D}}$ as value.  Moreover, given a p-$X$ (resp.~n-$X$), $X$-safe derivation $\mathscr{D}$ of $C$ from $\Gamma$, the operation which associates to any formula $A$ the derivation $\mathscr{D}\ldbrack A/X\rdbrack$ (that we will abbreviate with $\mathscr{D}\ldbrack A\rdbrack$) yields a family of morphisms $\theta_A$ from $\SF{X}{\Gamma}{A}$ to $\SF{X}{C}{A}$. In the terminology of category theory, such a family of morphisms is a natural transformation between the functors associated to $\Gamma$ and $C$, provided that for any derivation $\mathscr{D}'$ of $B$ from $A,\Delta$ the following diagram commutes:
\begingroup\makeatletter\def\f@size{10}\check@mathfonts
\begin{equation*}\label{nat}
\begin{matrix}
\vcenter{\xymatrix{
\SF{X}{\Gamma}{A} \ar[r]^{\SD{X}{\mathscr{D}}{A}} \ar[d]_{\SF{X}{\Gamma}{\mathscr{D}'}} & \SF{X}{C}{A} \ar[d]^{\SF{X}{C}{\mathscr{D}'}} \\
\SF{X}{\Gamma}{B} \ar[r]_{\SD{X}{\mathscr{D}}{B}} & \SF{X}{C}{B}
}}
& \ & \ & \ &  \left ( \text{ resp.~ }\vcenter{
\xymatrix{
\SF{X}{\Gamma}{B} \ar[r]^{\SD{X}{\mathscr{D}}{B}} \ar[d]_{\SF{X}{\Gamma}{\mathscr{D}'}} & \SF{X}{C}{B} \ar[d]^{\SF{X}{C}{\mathscr{D}'}} \\
\SF{X}{\Gamma}{A} \ar[r]_{\SD{X}{\mathscr{D}}{A}} & \SF{X}{C}{A}
}} \ \ \right )
\end{matrix}
\end{equation*} 
\endgroup

Using the notions so far introduced, this can be expressed as follows:
\begin{definition}[naturality condition]\label{natcon}
Let $\mathscr{D}$ be a p-$X$ (resp.~n-$X$) derivation of $C$ from $\Gamma$. We say that $\mathscr{D}$ is {\em natural in $X$} iff for any derivation $\mathscr{D}'$ of $B$ from $A,\Delta$, the composition of $\SD{X}{\mathscr{D}}{A}$ with $\SF{X}{C}{\mathscr{D}'}$ is $\beta$-equal to the composition of $\SF{X}{\Gamma}{\mathscr{D}'}$ with $\SD{X}{\mathscr{D}}{B}$, that is iff the following holds: 
\begingroup\makeatletter\def\f@size{10}\check@mathfonts
\begin{lrbox}{\mypti}% Store prooftree in \mypti
\begin{varwidth}{\linewidth}
$\cexpD{X}{C}{\AXC{$A$}\noLine\UIC{$\mathscr{D}'$}\noLine\UIC{$B$}\DP}$
\end{varwidth}
\end{lrbox}
\begin{lrbox}{\mypto}% Store prooftree in \mypti
\begin{varwidth}{\linewidth}
$\cexpD{X}{\Gamma}{\AXC{$A$}\noLine\UIC{$\mathscr{D}'$}\noLine\UIC{$B$}\DP}$
\end{varwidth}
\end{lrbox}
$$\def\extraVskip{0pt}\AXC{$\phantom{A}$}\noLine\UIC{$\phantom{A}$}\noLine\UIC{$\SFI{X}{\Gamma}{A}$}\noLine\UIC{$\SDI{X}{\mathscr{D}}{A}$}\noLine\UIC{$\SFI{X}{C}{A}$}\noLine\UIC{$\SFI{X}{C}{\mathscr{D}'}$}\noLine\UIC{$\SFI{X}{C}{B}$}\DP\equiv\def\extraVskip{0pt}\AXC{$\SFI{X}{\Gamma}{A}$}\noLine\UIC{$\SDI{X}{\mathscr{D}}{A}$}\noLine\UIC{\usebox{\mypti}}\DP\qquad  \text{{\Large$=_\beta^{2\supset}$}}\qquad
\def\extraVskip{0pt}\AXC{\usebox{\mypto}}\noLine\UIC{$\SDI{X}{\mathscr{D}}{B}$}\noLine\UIC{$\SFI{X}{C}{B}$}\DP\equiv\def\extraVskip{0pt}\AXC{$\SFI{X}{\Gamma}{A}$}\noLine\UIC{$\SFI{X}{\Gamma}{\mathscr{D}'}$}\noLine\UIC{$\SFI{X}{\Gamma}{B}$}\noLine\UIC{$\SDI{X}{\mathscr{D}}{B}$}\noLine\UIC{$\SFI{X}{C}{B}$}\DP$$
\begin{lrbox}{\mypti}% Store prooftree in \mypti
\begin{varwidth}{\linewidth}
$\cexpD{X}{C}{\AXC{$A$}\noLine\UIC{$\mathscr{D}'$}\noLine\UIC{$B$}\DP}$
\end{varwidth}
\end{lrbox}
\begin{lrbox}{\mypto}% Store prooftree in \mypti
\begin{varwidth}{\linewidth}
$\cexpD{X}{\Gamma}{\AXC{$A$}\noLine\UIC{$\mathscr{D}'$}\noLine\UIC{$B$}\DP}$
\end{varwidth}
\end{lrbox}
$$ \left ( \ \text{resp.~  }\def\extraVskip{0pt}\AXC{$\phantom{A}$}\noLine\UIC{$\phantom{A}$}\noLine\UIC{$\SFI{X}{\Gamma}{B}$}\noLine\UIC{$\SDI{X}{\mathscr{D}}{B}$}\noLine\UIC{$\SFI{X}{C}{B}$}\noLine\UIC{$\SFI{X}{C}{\mathscr{D}'}$}\noLine\UIC{$\SFI{X}{C}{A}$}\DP\equiv\def\extraVskip{0pt}\AXC{$\SFI{X}{\Gamma}{B}$}\noLine\UIC{$\SDI{X}{\mathscr{D}}{B}$}\noLine\UIC{\usebox{\mypti}}\DP\qquad  \text{{\Large$=_\beta^{2\supset}$}}\qquad
\def\extraVskip{0pt}
\AXC{\usebox{\mypto}}
\noLine
\UIC{$\SDI{X}{\mathscr{D}}{A}$}
\noLine
\UIC{$\SFI{X}{C}{A}$}
\DP
\equiv\def\extraVskip{0pt}
\AXC{$\SFI{X}{\Gamma}{B}$}
\noLine\UIC{$\SFI{X}{\Gamma}{\mathscr{D}'}$}\noLine\UIC{$\SFI{X}{\Gamma}{A}$}\noLine\UIC{$\SDI{X}{\mathscr{D}}{A}$}\noLine\UIC{$\SFI{X}{C}{A}$}\DP \ \ \right )$$
\endgroup
\end{definition}

\medskip
\begin{theorem}\label{naturality-th}
If $\mathscr{D}$ is an $X$-safe, p-$X$ (resp.~n-$X$) derivation, then $\mathscr{D}$ is natural in $X$.
\end{theorem}
\begin{proof}
We will actually prove a slightly stronger claim: if $\D D$ is a pn-$X$ derivation, then the following equation holds (where $\Gamma$ (resp.~$\Delta$) is the set of p-$X$ (resp.~n-$X$) hypotheses of $\D D$):
\begingroup\makeatletter\def\f@size{10}\check@mathfonts
\begin{lrbox}{\mypti}% Store prooftree in \mypti
\begin{varwidth}{\linewidth}
$\cexpD{X}{C}{\AXC{$A$}\noLine\UIC{$\mathscr{D}'$}\noLine\UIC{$B$}\DP}$
\end{varwidth}
\end{lrbox}
\begin{lrbox}{\mypto}% Store prooftree in \mypti
\begin{varwidth}{\linewidth}
$\cexpD{X}{\Gamma}{\AXC{$A$}\noLine\UIC{$\mathscr{D}'$}\noLine\UIC{$B$}\DP}$
\end{varwidth}
\end{lrbox}
\begin{lrbox}{\myptu}% Store prooftree in \mypti
\begin{varwidth}{\linewidth}
$\cexpD{X}{\Delta}{\AXC{$A$}\noLine\UIC{$\mathscr{D}'$}\noLine\UIC{$B$}\DP}$
\end{varwidth}
\end{lrbox}
$$\def\extraVskip{0pt}
\AXC{$\phantom{A}$}
\noLine
\UIC{$\phantom{A}$}
\noLine
\UIC{$ \SF{X}{\Gamma}{A}$}
\AXC{$\SFI{X}{\Delta}{B}$}
\noLine
\UIC{$\SFI{X}{\Delta}{\D D'}$}
\noLine
\UIC{$\SFI{X}{\Delta}{A}$}
\noLine
\BIC{$\SDI{X}{\mathscr{D}}{A}$}
\noLine
\UIC{$\SFI{X}{C}{A}$}
\noLine
\UIC{$\SFI{X}{C}{\mathscr{D}'}$}
\noLine
\UIC{$\SFI{X}{C}{B}$}
\DP
\ \ =^{2\supset}_{\beta} \ \
\AXC{$\SFI{X}{\Gamma}{A}$}
\noLine
\UIC{$\SFI{X}{\Gamma}{\D D'}$}
\noLine
\UIC{$\SFI{X}{\Gamma}{B}$}
\AXC{$\phantom{A}$}
\noLine
\UIC{$\phantom{A}$}
\noLine
\UIC{$\SF{X}{\Delta}{B}$}
\noLine
\BIC{$\SDO{X}{\mathscr{D}}{B}$}
\noLine
\UIC{$\SFO{X}{C}{B}$}
\DP$$
$$
\left ( \text{ resp.~}\def\extraVskip{0pt}
\AXC{$\SFI{X}{\Gamma}{A}$}
\noLine
\UIC{$\SFI{X}{\Gamma}{\D D'}$}
\noLine
\UIC{$\SFI{X}{\Gamma}{B}$}
\AXC{$\phantom{A}$}
\noLine
\UIC{$\phantom{A}$}
\noLine
\UIC{$ \SF{X}{\Delta}{B}$}
\noLine
\BIC{$\SDI{X}{\mathscr{D}}{B}$}
\noLine
\UIC{$\SFI{X}{C}{B}$}
\noLine
\UIC{$\SFI{X}{C}{\mathscr{D}'}$}
\noLine
\UIC{$\SFI{X}{C}{A}$}
\DP
\ \ =^{2\supset}_{\beta} \ \
\AXC{$\phantom{B}$}
\noLine
\UIC{$\phantom{B}$}
\noLine
\UIC{$\SF{X}{\Gamma}{A}$}
\AXC{$\SFI{X}{\Delta}{B}$}
\noLine
\UIC{$\SFI{X}{\Delta}{\D D'}$}
\noLine
\UIC{$\SFI{X}{\Delta}{A}$}
\noLine
\BIC{$\SDI{X}{\mathscr{D}}{A}$}
\noLine
\UIC{$\SFI{X}{C}{A}$}
\DP
\right )
$$
\endgroup

Without loss of generality we can assume $\D D$ normal; we argue by induction on the number of inference rules applied in $\mathscr{D}$. We limit ourselves to some cases:
\begin{itemize}
\item if $\D D$ consists of the assumption of a p-$X$ (resp.~n-$X$) formula $C$ then, for all derivations $\mathscr{D}'$ of $B$ from $A$, we obviously have 
\begingroup\makeatletter\def\f@size{10}\check@mathfonts
\begin{lrbox}{\mypti}% Store prooftree in \mypti
\begin{varwidth}{\linewidth}
$\cexpD{X}{C}{\AXC{$A$}\noLine\UIC{$\mathscr{D}'$}\noLine\UIC{$B$}\DP}$
\end{varwidth}
\end{lrbox}
$$ \usebox{\mypti}=_{\beta}^{2\supset} \usebox{\mypti} $$
\endgroup
\item if $\D D\equiv {\small
\AXC{$\Gamma$}\noLine\UIC{$\D D_1$}\noLine\UIC{$G$}\RightLabel{\footnotesize$\forall I$}\UIC{$\forall Y G$}\DisplayProof}$ and $G$ is p-$X$ then, for all derivations $\mathscr{D}'$ of $B$ from $A$, supposing by remark~\ref{alfa}ii.{} that no bound variable of $\D D$ occurs free in $\D D'$, we have that
\begingroup\makeatletter\def\f@size{10}\check@mathfonts
\begin{lrbox}{\mypti}% Store prooftree in \mypti
\begin{varwidth}{\linewidth}
$\cexpD{X}{G}{\AXC{$A$}\noLine\UIC{$\mathscr{D}'$}\noLine\UIC{$B$}\DP}$
\end{varwidth}
\end{lrbox}
\begin{lrbox}{\mypta}% Store prooftree in \mypti
\begin{varwidth}{\linewidth}
$\cexpD{X}{\Gamma}{\AXC{$A$}\noLine\UIC{$\mathscr{D}'$}\noLine\UIC{$B$}\DP}$
\end{varwidth}
\end{lrbox}
$$\def\extraVskip{0pt}\def\defaultHypSeparation{\hskip 1em}
\AXC{$\SF{X}{\Gamma}{A}$}
\AXC{$\SFI{X}{\Delta}{B}$}
\noLine
\UIC{$\SFI{X}{\Delta}{\D D'}$}
\noLine
\UIC{$\SFI{X}{\Delta}{A}$}
\noLine
\BIC{$\SDI{X}{\D D_1}{A}$}\noLine\UIC{$\SFI{X}{G}{A}$}\def\extraVskip{2pt}
\RightLabel{\footnotesize$\forall I$}\UIC{$\SFI{X}{\forall Y G}{A}$}\def\extraVskip{0pt}
\noLine\UIC{$\SFI{X}{\forall Y G}{\D D'}$}\noLine\UIC{$\SFI{X}{\forall YG}{ B} $}\DisplayProof
\ \ \equiv\ \
\def\extraVskip{0pt}\AXC{$\SF{X}{\Gamma}{A}$}\AXC{$\SFI{X}{\Delta}{B}$}
\noLine
\UIC{$\SFI{X}{\Delta}{\D D'}$}
\noLine
\UIC{$\SFI{X}{\Delta}{A}$}
\noLine\BIC{$\SDI{X}{\D D_1}{A}$}\noLine\UIC{$\SFI{X}{G}{A}$}\def\extraVskip{2pt}\RightLabel{\footnotesize$\forall I$}\UIC{$\forall Y\SF{X}{ G}{A}$}\RightLabel{\footnotesize$\forall E$}\UIC{\usebox{\mypti}}\RightLabel{\footnotesize$\forall I$}\UIC{$\forall Y\SF{X}{G}{ B }$}\DisplayProof
\ \ =_{\beta}^{2}\ \ 
\def\extraVskip{0pt}\AXC{$\SF{X}{\Gamma}{A}$}\AXC{$\SFI{X}{\Delta}{B}$}
\noLine
\UIC{$\SFI{X}{\Delta}{\D D'}$}
\noLine
\UIC{$\SFI{X}{\Delta}{A}$}
\noLine\BIC{$\SDI{X}{\D D_1}{A}$}\noLine\UIC{\usebox{\mypti}}\def\extraVskip{2pt}\RightLabel{\footnotesize$\forall I$}\UIC{$\SF{X}{\forall YG}{B}$}\DisplayProof
\quad\stackrel{\text{\fbox{i.h.$\!$}}}{=_{\beta}^{2\supset}}\quad$$
$$\ \ \stackrel{\text{\fbox{i.h.$\!$}}}{=_{\beta}^{2\supset}}\ \ \def\defaultHypSeparation{\hskip 1em}\def\extraVskip{0pt}\AXC{$\SFI{X}{\Gamma}{A}$}\noLine\UIC{$\SFI{X}{\Gamma}{\mathscr{D}'}$}\noLine\UIC{$\SFI{X}{\Gamma}{B}$}\AXC{$\SF{X}{\Delta}{B}$}\noLine\BIC{$\SDI{X}{\D D_1}{B}$}\noLine\UIC{$\SFI{X}{G}{B}$}\def\extraVskip{2pt}\RightLabel{\footnotesize$\forall I$}\UIC{$\SF{X}{\forall YG}{B}$}\DisplayProof
\ \ \equiv\ \  \def\extraVskip{0pt}
\AXC{$\SFI{X}{\Gamma}{A}$}\noLine\UIC{$\SFI{X}{\Gamma}{\mathscr{D}'}$}\noLine\UIC{$\SFI{X}{\Gamma}{B}$}\AXC{$\SF{X}{\Delta}{B}$}\def\extraVskip{0pt}\noLine\BIC{$\SD{X}{\D D_1}{B}$}\noLine\UIC{$\SF{X}{G}{B}$}\def\extraVskip{2pt}\RightLabel{\footnotesize$\forall I$}\UIC{$\SF{X}{\forall YG}{B}$}\DisplayProof
$$
\endgroup
\item if $\D D\equiv {\small\AXC{$\Gamma   $}\noLine\UIC{$\D D_1 $}\noLine\UIC{$\forall Y G$}\RightLabel{\footnotesize$\forall E$}\UIC{$G\llbracket F/Y\rrbracket$}\DisplayProof}$, with $G$ p-$X$, then $X$ does not occur in $F$ (since $\D D$ is $X$-safe). By remark \ref{alfa}ii., we can suppose that $Y$ does not occur free in $\D D'$ and thus, by lemma \ref{lemmasafe}, for all $\D D'$ of $B$ from $A$, $\SF{X}{G\ldbrack F/Y\rdbrack}{\mathscr{D}'}\equiv \SF{X}{G}{\mathscr{D}'}\ldbrack F/Y\rdbrack$. Hence
\begingroup\makeatletter\def\f@size{10}\check@mathfonts
\begin{lrbox}{\mypti}% Store prooftree in \mypti
\begin{varwidth}{\linewidth}
$\cexpD{X}{\forall Y G}{\AXC{$A$}\noLine\UIC{$\mathscr{D}'$}\noLine\UIC{$B$}\DP}$
\end{varwidth}
\end{lrbox}
\begin{lrbox}{\mypto}% Store prooftree in \mypti
\begin{varwidth}{\linewidth}
$\cexpD{X}{G}{\AXC{$A$}\noLine\UIC{$\mathscr{D}'$}\noLine\UIC{$B$}\DP}$
\end{varwidth}
\end{lrbox}
\begin{lrbox}{\mypta}% Store prooftree in \mypti
\begin{varwidth}{\linewidth}
$\cexpD{X}{\Gamma}{\AXC{$A$}\noLine\UIC{$\mathscr{D}'$}\noLine\UIC{$B$}\DP}$
\end{varwidth}
\end{lrbox}
\begin{lrbox}{\mypte}% Store prooftree in \mypti
\begin{varwidth}{\linewidth}
$\cexpD{X}{G\llbracket F/Y\rrbracket}{\AXC{$A$}\noLine\UIC{$\mathscr{D}'$}\noLine\UIC{$B$}\DP}$
\end{varwidth}
\end{lrbox}
$$\def\defaultHypSeparation{\hskip 1em}\def\extraVskip{0pt}\AXC{$\SF{X}{\Gamma}{A}$}\AXC{$\SFI{X}{\Delta}{B}$}\noLine\UIC{$\SFI{X}{\Delta}{\mathscr{D}'}$}\noLine\UIC{$\SFI{X}{\Delta}{A}$}\noLine\BIC{$\SDI{X}{\D D_{1}}{A}$}\noLine\UIC{$\SF{X}{\forall YG}{A}$}\def\extraVskip{2pt}\RightLabel{\footnotesize$\forall E$}\UIC{$\SFI{X}{G\llbracket F/Y\rrbracket}{A}$}\def\extraVskip{0pt}\noLine\UIC{$\SFI{X}{G\llbracket F/Y\rrbracket}{\D D'}$}\noLine\UIC{$\SFI{X}{G\llbracket F/Y\rrbracket}{B}$}\DisplayProof 
\ \ \equiv\ \
\def\extraVskip{0pt}\AXC{$\SF{X}{\Gamma}{A}$}\AXC{$\SFI{X}{\Delta}{B}$}\noLine\UIC{$\SFI{X}{\Delta}{\mathscr{D}'}$}\noLine\UIC{$\SFI{X}{\Delta}{A}$}\noLine
\BIC{$\SD{X}{\D D_{1}}{A}$}\noLine\UIC{$\forall Y\SF{X}{G}{A}$}\def\extraVskip{2pt}\RightLabel{\footnotesize$\forall E$}\UIC{\usebox{\mypte}}\DisplayProof 
\ \ \equiv\ \ 
\def\extraVskip{0pt}\AXC{$\SF{X}{\Gamma}{A}$}
\AXC{$\SFI{X}{\Delta}{B}$}\noLine\UIC{$\SFI{X}{\Delta}{\mathscr{D}'}$}\noLine\UIC{$\SFI{X}{\Delta}{A}$}\noLine\BIC{$\SD{X}{\D D_{1}}{A}
$}\noLine\UIC{$\forall Y\SF{X}{G}{A}$}\def\extraVskip{2pt}\RightLabel{\footnotesize$\forall E$}\UIC{\usebox{\mypto}$\llbracket F/Y\rrbracket$}\DisplayProof\ \ =_{\beta}^{2}\ \  $$
$$
=_{\beta}^{2}\ 
\def\defaultHypSeparation{\hskip 1em}\def\extraVskip{0pt}\AXC{$\SF{X}{\Gamma}{A}$}\AXC{$\SFI{X}{\Delta}{B}$}\noLine\UIC{$\SFI{X}{\Delta}{\mathscr{D}'}$}\noLine\UIC{$\SFI{X}{\Delta}{A}$}\noLine\BIC{$\SD{X}{\D D_{1}}{A}$}\noLine\UIC{$\forall Y\SF{X}{G}{A}$}\def\extraVskip{2pt}\RightLabel{\footnotesize$\forall E$}\UIC{\usebox{\mypto}}\RightLabel{\footnotesize$\forall I$}\UIC{$\forall Y\SF{X}{G}{B}$}\RightLabel{\footnotesize$\forall E$}\UIC{$\SF{X}{G}{B}\llbracket F/Y\rrbracket$}\DisplayProof
\ \ \equiv\ \
 \def\extraVskip{0pt}\AXC{$\SF{X}{\Gamma}{A}$}\AXC{$\SFI{X}{\Delta}{B}$}\noLine\UIC{$\SFI{X}{\Delta}{\mathscr{D}'}$}\noLine\UIC{$\SFI{X}{\Delta}{A}$}\noLine\BIC{$\SD{X}{\D D_{1}}{A}$}\noLine\UIC{\usebox{\mypti}}\def\extraVskip{2pt}\RightLabel{\footnotesize$\forall E$}\UIC{$\SF{X}{G\llbracket F/Y\rrbracket}{B}$}\DisplayProof
\ \ \stackrel{\text{\fbox{i.h.$\!$}}}{=_{\beta}^{2\supset}}\ \
 \def\extraVskip{0pt}\AXC{$\SFI{X}{\Gamma}{A}$}\noLine\UIC{$\SFI{X}{\Gamma}{\mathscr{D}'}$}\noLine\UIC{$\SFI{X}{\Gamma}{B}$}\AXC{$\SF{X}{\Delta}{B}$}\noLine\BIC{$\SD{X}{\D D_{1}}{B}$}\noLine\UIC{$\SF{X}{\forall YG}{B}$}\def\extraVskip{2pt}\RightLabel{\footnotesize$\forall E$}\UIC{$\SF{X}{G\llbracket F/Y\rrbracket}{B}$}\DisplayProof$$
\endgroup
\item if $\D D\equiv {\small\AXC{$\Gamma\ [\stackrel{n}{F}]$}\noLine\UIC{$\D D_1$}\noLine\UIC{$G$}\RightLabel{\footnotesize$\supset$I $(n)$}\UIC{$F\supset G$}\DisplayProof}$, where $F\supset G$ is p-$X$, then $F$ is n-$X$ and $G$ is p-$X$. Hence we have that
\begingroup\makeatletter\def\f@size{10}\check@mathfonts
\begin{lrbox}{\mypto}% Store prooftree in \mypti
\begin{varwidth}{\linewidth}
$\cexpD{X}{F\supset G}{\AXC{$A$}\noLine\UIC{$\mathscr{D}'$}\noLine\UIC{$B$}\DP}$
\end{varwidth}
\end{lrbox}
\begin{lrbox}{\mypta}% Store prooftree in \mypti
\begin{varwidth}{\linewidth}
$\cexpD{X}{F}{\AXC{$A$}\noLine\UIC{$\mathscr{D}'$}\noLine\UIC{$B$}\DP}$
\end{varwidth}
\end{lrbox}
\begin{lrbox}{\mypti}% Store prooftree in \mypti
\begin{varwidth}{\linewidth}
$\cexpD{X}{G}{\AXC{$A$}\noLine\UIC{$\mathscr{D}'$}\noLine\UIC{$B$}\DP}$
\end{varwidth}
\end{lrbox}
$$\def\extraVskip{0pt}\def\defaultHypSeparation{\hskip .0em}
\AXC{$\SF{X}{\Gamma}{A}$\!\!\!\!\!}
\AXC{$\ \ \, [ \stackrel{n}{\SFI{X}{F}{A}} ] $}
\AXC{$\SFI{X}{\Delta}{B}$}
\noLine
\UIC{$\SFI{X}{\Delta}{\D D'}$}
\noLine
\UIC{$\SFI{X}{\Delta}{A}$}
\noLine\TrinaryInfC{$\SDI{X}{\D D_1 }{A}$}\noLine\UIC{$\SFI{X}{G}{A}$}\def\extraVskip{2pt}\RightLabel{\footnotesize$\supset$I $(n)$}
\UIC{$\SFI{X}{F\supset G}{A}$}\def\extraVskip{0pt}\noLine\UIC{$\SFI{X}{F\supset G}{\D D'}$}\noLine\UIC{$\SFI{X}{F\supset G}{B}$}\DisplayProof 
\ \ \equiv \ \ 
\def\extraVskip{0pt}
\AXC{$\SF{X}{\Gamma}{A} \!\!\!\!\!$}
\AXC{$\ \ \, [\stackrel{n}{\SFI{X}{F}{A}}]$}
\AXC{$\SFI{X}{\Delta}{B}$}
\noLine
\UIC{$\SFI{X}{\Delta}{\D D'}$}
\noLine
\UIC{$\SFI{X}{\Delta}{A}$}
\noLine\TrinaryInfC{$\SDI{X}{\D D_1}{ A }$}\noLine\UIC{$\SFI{X}{G}{A}$}\def\extraVskip{2pt}\RightLabel{\footnotesize$\supset$I $(n)$}\UIC{$\SF{X}{F\supset G}{A}$}
\def\extraVskip{0pt}\AXC{$[\stackrel{m}{\SFI{X}{F}{B}}]$}
\noLine
\UIC{$\SFI{X}{F}{\D D'}$}
\noLine
\UIC{$\SFI{X}{F}{A}$}\def\extraVskip{2pt}
\RightLabel{\footnotesize$\supset$E}
\BIC{\usebox{\mypti}}\RightLabel{\footnotesize$\supset$I $(m)$}\UIC{$\SF{X}{F\supset G}{B}$}\DisplayProof 
\ \ =_{\beta}^{\supset} 
$$
$$
=_{\beta}^{\supset}\ \ 
\def\extraVskip{0pt}\def\defaultHypSeparation{\hskip .1em}
\AXC{$\SF{X}{\Gamma}{A}$}
\AXC{$[\stackrel{m}{ \SFI{X}{F}{B}} ]$}
\noLine
\UIC{$\SFI{X}{F}{\D D'}$}
\noLine
\UIC{$\SFI{X}{F}{A}$}
\AXC{$\SFI{X}{\Delta}{B}$}
\noLine
\UIC{$\SFI{X}{\Delta}{\D D'}$}
\noLine
\UIC{$\SFI{X}{\Delta}{A}$}
\noLine\TrinaryInfC{$\SDI{X}{\D D_1}{ A }$}\noLine\UIC{\usebox{\mypti}}\def\extraVskip{2pt}\RightLabel{\footnotesize$\supset$I $(m)$}\UIC{$\SF{X}{F\supset G}{B}$}\DisplayProof
\ \ 
\stackrel{\text{\fbox{i.h.$\!$}}}{=_{\beta}^{2\supset}}\ \ \def\extraVskip{0pt}\def\defaultHypSeparation{\hskip .1em}
\AXC{$\SFI{X}{\Gamma}{A}$}
\noLine
\UIC{$\SFI{X}{\Gamma}{\D D'}$}
\noLine
\UIC{$\SFI{X}{\Gamma}{B}$}
\AXC{$[\stackrel{m}{\SF{X}{F}{B}}]$}
\noLine
\UIC{$\SFI{X}{F}{\D D'}$}
\noLine
\UIC{$\SFI{X}{F}{A}$}
\AXC{$\SFI{X}{\Delta}{B}$}
\noLine\TrinaryInfC{$\SDI{X}{\D D_1}{B}$}\noLine\UIC{$\SFI{X}{G}{B}$}\def\extraVskip{2pt}\RightLabel{\footnotesize$\supset$I $(m)$}\UIC{$\SF{}{F\supset G}{B}$}\DisplayProof 
$$\endgroup
\item if $ \D D\equiv {\small
\AxiomC{$\Gamma_1$}\noLine\UIC{$\mathscr{D}_1$}
\noLine
\UnaryInfC{$F\supset G$}
\AxiomC{$\Gamma_2$}\noLine\UIC{$\mathscr{D}_2$}
\noLine
\UnaryInfC{$F$}
\RightLabel{\footnotesize$\supset$E}
\BinaryInfC{$G$}
\DisplayProof}
$, with $F$ n-$X$ and $G$ p-$X$, then 
\begingroup\makeatletter\def\f@size{10}\check@mathfonts
\begin{lrbox}{\mypti}% Store prooftree in \mypti
\begin{varwidth}{\linewidth}
$\cexpD{X}{G}{\AXC{$A$}\noLine\UIC{$\mathscr{D}'$}\noLine\UIC{$B$}\DP}$
\end{varwidth}
\end{lrbox}
\begin{lrbox}{\mypte}% Store prooftree in \mypti
\begin{varwidth}{\linewidth}
$\cexpD{X}{F\supset G}{\AXC{$A$}\noLine\UIC{$\mathscr{D}'$}\noLine\UIC{$B$}\DP}$
\end{varwidth}
\end{lrbox}
\begin{lrbox}{\mypto}% Store prooftree in \mypti
\begin{varwidth}{\linewidth}
$\cexpD{X}{F}{\AXC{$A$}\noLine\UIC{$\mathscr{D}'$}\noLine\UIC{$B$}\DP}$
\end{varwidth}
\end{lrbox}
\begin{lrbox}{\mypta}% Store prooftree in \mypti
\begin{varwidth}{\linewidth}
$\cexpD{X}{\Gamma_1}{\AXC{$A$}\noLine\UIC{$\mathscr{D}'$}\noLine\UIC{$B$}\DP}$
\end{varwidth}
\end{lrbox}
\begin{lrbox}{\myptu}% Store prooftree in \mypti
\begin{varwidth}{\linewidth}
$\SF{X}{\Gamma_2}{\AXC{$A$}\noLine\UIC{$\mathscr{D}'$}\noLine\UIC{$B$}\DP}$
\end{varwidth}
\end{lrbox}
$$\AXC{$\SF{X}{\Gamma_1}{A}$}
\def\extraVskip{0pt}\AXC{$\SFI{X}{\Delta_{1}}{B}$}
\noLine
\UIC{$\SF{X}{\Delta_{1}}{\D D'}$}
\noLine
\UIC{$\SFI{X}{\Delta_{1}}{A}$}
\noLine\BIC{$\SD{X}{\mathscr{D}_{1}}{A}$}
\noLine\UIC{$\SF{X}{F\supset G}{A}$}
\AXC{$\SF{X}{\Gamma_2}{A}$}
\AXC{$\SFI{X}{\Delta_{2}}{B}$}
\noLine
\UIC{$\SFI{X}{\Delta_{2}}{\D D'}$}
\noLine
\UIC{$\SFI{X}{\Delta_{2}}{A}$}
\noLine\BIC{$\SD{X}{\mathscr{D}_{2}}{A}$}\noLine\UIC{$\SF{X}{F}{A}$}
\def\extraVskip{2pt}\RightLabel{\footnotesize$\supset$E}
\BIC{\SFI{X}{G}{A}}\def\extraVskip{0pt}\noLine\UIC{\SFI{X}{G}{\mathscr{D}'}}\noLine\UIC{\SFI{X}{G}{B}}\DisplayProof 
\ \ \stackrel{\text{\fbox{i.h.$\!$}}}{=_{\beta}^{2\supset}}$$
$$\def\extraVskip{0pt}\stackrel{\text{\fbox{i.h.$\!$}}}{=_{\beta}^{2\supset}}\ \ \AXC{$\SF{X}{\Gamma_1}{A}$}
\AXC{$\SFI{X}{\Delta_{1}}{B}$}
\noLine
\UIC{$\SFI{X}{\Delta_{1}}{\D D'}$}
\noLine
\UIC{$\SFI{X}{\Delta_{1}}{A}$}
\noLine\BIC{$\SD{X}{\mathscr{D}_{1}}{A}$}
\noLine\UIC{$\SF{X}{F\supset G}{A}$}
\AXC{$\SFI{X}{\Gamma_2}{A}$}
\noLine
\UIC{$\SFI{X}{\Gamma_{2}}{\D D'}$}
\noLine
\UIC{$\SFI{X}{\Gamma_{2}}{B}$}
\AXC{$\SF{X}{\Delta_{2}}{B}$}
\noLine\BIC{$\SDI{X}{\mathscr{D}_{2}}{B}$}\noLine\UIC{$\SFI{X}{F}{B}$}
\noLine
\UIC{$\SFI{X}{F}{\D D'}$}
\noLine
\UIC{$\SFI{X}{F}{A}$}
\def\extraVskip{2pt}\RightLabel{\footnotesize$\supset$E}
\BIC{\SFI{X}{G}{A}}\def\extraVskip{0pt}\noLine\UIC{\SFI{X}{G}{\mathscr{D}'}}\noLine\UIC{\SFI{X}{G}{B}}\DisplayProof \ \ =_{\beta}^{2\supset}$$
$$
=_{\beta}^{2\supset}\ \ 
\def\extraVskip{0pt}\AXC{$\SF{X}{\Gamma_1}{A}$}
\AXC{$\SFI{X}{\Delta_{1}}{B}$}
\noLine
\UIC{$\SFI{X}{\Delta_{1}}{\D D'}$}
\noLine
\UIC{$\SFI{X}{\Delta_{1}}{A}$}
\noLine\BIC{$\SD{X}{\mathscr{D}_{1}}{A}$}
\noLine\UIC{$\SF{X}{F\supset G}{A}$}
\AXC{$[\stackrel{n}{\SF{X}{F}{B}}]$}
\noLine
\UIC{$\SF{X}{F}{\D D}$}
\noLine
\UIC{$\SF{X}{F}{A}$}\def\extraVskip{2pt}\RL{$\supset$E}
\BIC{$\SFI{X}{G}{A}\ $}
\def\extraVskip{0pt}\noLine\UIC{\SFI{X}{G}{\mathscr{D}'}}\noLine\UIC{\SFI{X}{G}{B}}
\def\extraVskip{2pt}\RL{$\supset$I $(n)$}
\UIC{$\SF{X}{F\supset G}{B}$}
\def\extraVskip{0pt}\AXC{$\SFI{X}{\Gamma_2}{A}$}
\noLine
\UIC{$\SFI{X}{\Gamma_{2}}{\D D'}$}
\noLine
\UIC{$\SFI{X}{\Gamma_{2}}{B}$}
\AXC{$\SF{X}{\Delta_{2}}{B}$}
\noLine\BIC{$\SD{X}{\mathscr{D}_{2}}{B}$}\noLine\UIC{$\SF{X}{F}{B}$}
\def\extraVskip{2pt}\RightLabel{\footnotesize$\supset$E}
\BIC{\SF{X}{G}{B}}
\DP
\ \ \stackrel{\text{\fbox{i.h.$\!$}}}{=_{\beta}^{2\supset}}$$$$\stackrel{\text{\fbox{i.h.$\!$}}}{=_{\beta}^{2\supset}}\ \ 
\def\extraVskip{0pt}
\AXC{$\SFI{X}{\Gamma_1}{A}$}
\noLine
\UIC{$\SFI{X}{\Gamma_{1}}{\D D'}$}
\noLine
\UIC{$\SFI{X}{\Gamma_{1}}{B}$}
\AXC{$\SF{X}{\Delta_{1}}{B}$}
\noLine\BIC{$\SD{X}{\mathscr{D}_{1}}{B}$}
\noLine\UIC{$\SF{X}{F\supset G}{B}$}
\AXC{$\SFI{X}{\Gamma_2}{A}$}
\noLine
\UIC{$\SFI{X}{\Gamma_{2}}{\D D'}$}
\noLine
\UIC{$\SFI{X}{\Gamma_{2}}{B}$}
\AXC{$\SF{X}{\Delta_{2}}{B}$}
\noLine\BIC{$\SD{X}{\mathscr{D}_{2}}{B}$}\noLine\UIC{$\SF{X}{F}{B}$}
\def\extraVskip{2pt}\RightLabel{\footnotesize$\supset$E}
\BIC{\SF{X}{G}{B}}
\DP
$$
\endgroup
\end{itemize}
\end{proof}

\begin{remark}\label{din-rem}In \cite{Girard1992} it is proved that derivations in $\texttt{NI}$ satisfy a more general property, called \emph{dinaturality}. Dinaturality takes into account formulas in which variables might occur \emph{both} positively and negatively (in categorial terms, dinaturality accounts for both the covariant and the contravariant action of the functors associated to the formulas). This is the reason why, whereas naturality only holds for $X$-safe derivations whose premises and conclusions are either positive or negative, dinaturality holds for all $X$-safe derivations in $\texttt{NI}$.
However, dinaturality is a weaker property than naturality; in particular, it is well-known that two dinatural transformations might fail to compose, see \cite{Bainbridge1990}. 

Observe that the $X$-safety requirement is essential to make the rule $\forall$E (di-)natural (though, in a sense,  $X$-safety makes $\forall$E (di-)natural in a ``trivial'' way) and that neither theorem~\ref{naturality-th} nor its ``dinatural'' generalization can be extended to derivations that are not $X$-safe (a counter-example to dinaturality in $\texttt{NI}^2$ can be found in \cite{Delatail2009}).
\end{remark}

\section{Nested positive formulas and permutative conversions}\label{sec-epsi}

\subsection{The $\varepsilon$-equation}
The left-to-right orientation of the naturality condition discussed in the previous section has the flavour of a permutative conversion, in the sense that the derivation of $B$ from $A,\Delta$ is permuted across the derivation of $C$ from $\Gamma$. A precise connection between naturality and permutative conversions can be spelled out by  introducing the following class of formulas:

\begin{definition}[nested p-$X$ formulas] Let $ \forall\OV Y_{i}$ (for $1\leq i\leq n$) denote finite (possibly empty) sequences of quantifiers. A formula of the form 
\begingroup\makeatletter\def\f@size{10}\check@mathfonts
$$\forall \OV Y_{1}(F_{1}\supset \forall \OV Y_{2}(F_{2}\supset \dots \supset \forall \OV Y_{n}(F_{n}\supset X)\dots ))$$\endgroup
is {\em nested p-$X$} provided that $\forall \OV Y_1 F_{1},\dots, \forall \OV Y_1 \ldots \forall \OV Y_n F_{n}$ are p-$X$  formulas.
\end{definition}

\begin{remark}\label{qsip}
The RP-translation of $A\vee B$ is of the form $\forall X (C\supset D\supset X)$, where $C$ and $D$ are p-$X$. Thus is of the form $\forall X F$, with $F$ nested p-$X$.
\end{remark}

 \begin{remark}
 If $F$ is nested p-$X$ then 
 \begingroup\makeatletter\def\f@size{10}\check@mathfonts$$\SF{X}{F}{A}\equiv  \forall \OV Y_{1}(\SF{X}{F_{1}}{A}\supset \forall \OV Y_{2}(\SF{X}{F_{2}}{A}\supset \dots \supset \forall \OV Y_{n}(\SF{X}{F_{n}}{A}\supset A)\dots ))$$\endgroup
 \end{remark}
We introduce now the following equation, where $F$ is nested p-$X$:
\begingroup\makeatletter\def\f@size{10}\check@mathfonts
\begin{lrbox}{\mypti}% Store prooftree in \mypti
\raisebox{2.88ex}[-0ex][0ex]{\begin{varwidth}{\linewidth}
$\cexpD{X}{F_1}{\AXC{$A$}\noLine\UIC{$\mathscr{D}_2$}\noLine\UIC{$B$}\DP}$
\end{varwidth}}
\end{lrbox}
\begin{lrbox}{\mypto}% Store prooftree in \mypti
\raisebox{2.88ex}[-0ex][0ex]{\begin{varwidth}{\linewidth}
$\cexpD{X}{F_n}{\AXC{$A$}\noLine\UIC{$\mathscr{D}_2$}\noLine\UIC{$B$}\DP}$
\end{varwidth}}
\end{lrbox}
\begin{equation*}
\def\defaultHypSeparation{\hskip .05in}\AXC{$\mathscr{D}_1$}
\noLine
\UnaryInfC{$\forall X F$}\RightLabel{\footnotesize$\forall $E}\UIC{$\SF{X}{\forall \OV Y_{1}(F_{1}\supset \forall \OV Y_{2}(F_{2}\supset \dots \supset \forall \OV Y_{n}(F_{n}\supset X)\dots ))}{A}$}\doubleLine\RightLabel{\footnotesize$\forall $E}\UIC{$\SF{X}{F_{1}\supset \forall \OV Y_{2}(F_{2}\supset \dots \supset \forall \OV Y_{n}(F_{n}\supset X)\dots )}{A}$}
\AxiomC{$\SF{X}{F_{1}}{A}$}
\RightLabel{\footnotesize$\supset$E}
\BinaryInfC{$\SF{X}{\forall \OV Y_{2}(F_{2}\supset \dots \supset \forall \OV Y_{n}(F_{n}\supset X)\dots )}{A}$}
\noLine
\UIC{$\ddots$}
\noLine
\UIC{$\ $}
\noLine
\UIC{$\SF{X}{\forall \OV Y_{n}(F_{n}\supset X)}{A}$}\doubleLine\RightLabel{\footnotesize$\forall$E}\UIC{$\SF{X}{F_{n}\supset X}{A}$}
\AXC{$\SF{X}{F_{n}}{A}$}
\RightLabel{\footnotesize$\supset$E}\def\defaultHypSeparation{\hskip -.1in}
\BIC{$A$}
\noLine
\UnaryInfC{$\mathscr{D}_2$}
\noLine
\UnaryInfC{$B$}
\DisplayProof  \end{equation*}
\begin{equation*}\text{\Large$=_{\varepsilon}$} \tag{$\varepsilon$}\label{epsilonew}
\end{equation*}
\begin{equation*}
\def\defaultHypSeparation{\hskip .05in}\AXC{$\mathscr{D}_1$}
\noLine
\UnaryInfC{$\forall X F$}\RightLabel{\footnotesize$\forall $E}\UIC{$\SF{X}{\forall \OV Y_{1}(F_{1}\supset \forall \OV Y_{2}(F_{2}\supset \dots \supset \forall \OV Y_{n}(F_{n}\supset X)\dots ))}{B}$}\doubleLine\RightLabel{\footnotesize$\forall$E}\UIC{$\SF{X}{F_{1}\supset \forall \OV Y_{2}(F_{2}\supset \dots \supset \forall \OV Y_{n}(F_{n}\supset X)\dots )}{B}$}\AxiomC{\usebox{\mypti}}
  \RightLabel{\footnotesize$\supset$E}
  \BIC{$\SF{X}{\forall \OV Y_{2}(F_{2}\supset \dots \supset \forall \OV Y_{n}(F_{n}\supset X)\dots )}{B}$} \noLine
 \UIC{$\ddots$}\noLine
\UIC{$\SF{X}{\forall \OV Y_{n}(F_{n}\supset X)}{B}$}\doubleLine\RightLabel{\footnotesize$\forall$E}\UIC{$\SF{X}{F_{n}\supset X}{B}$}\AxiomC{\usebox{\mypto}}
 \RightLabel{\footnotesize$\supset$E}\def\defaultHypSeparation{\hskip -.3in}
 \BinaryInfC{$B$}\DP
\end{equation*}%\end{equation*}%}
\endgroup
We will call $=_\varepsilon$ the equivalence over \Nd derivations generated by the smallest congruence relation closed under reflexivity, symmetry, transitivity and the schema \eqref{epsilonew}.

As an immediate consequence of theorem \ref{naturality-th} we obtain a proposition analogous to \ref{etaclose} and \ref{gammaclose} for the instances of \eqref{epsilonew} in which the derivation $\D D_1$ of $\forall X F$ is closed (we call these instances m-closed and we indicate with $=_{\varepsilon^{mc}}$ the equivalence relation induced by them):

\begin{proposition}\label{elimnat}
If $\D D',\D D''$ are $X$-safe and $\D D' =_{\varepsilon^{mc}} \D D''$, then $\D D'
=_{\beta}^{2\supset} \D D''$.
\end{proposition}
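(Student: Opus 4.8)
The plan is to read the statement off Theorem~\ref{naturality-th} once the outermost quantifier of the quasi sp-$X$ premise has been removed by $\beta$-reduction. Since $=_{\beta}^{2\supset}$ is itself an equivalence relation closed under substitution, it is enough to show that the two sides $\mathscr{D}'$, $\mathscr{D}''$ of a single m-closed instance of \eqref{epsilonew}, both assumed $X$-safe, are $\beta$-equal; the general statement then follows by transitivity and closure under substitution.

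First I would normalise the closed premise $\mathscr{D}_1$ of $\forall X F$. Being closed and $X$-safe, it $\beta$-reduces (by the restricted normalization recalled before Proposition~\ref{closenormintro}) to a $\beta$-normal derivation which, $X$-safety being preserved along $\beta$-reduction, is again $X$-safe, and which by Proposition~\ref{closenormintro} ends with an introduction rule; as its conclusion is $\forall X F$, this rule is $\forall$I. Hence $\mathscr{D}_1 =_{\beta}^{2\supset}$ the result of applying $\forall$I to a closed, $X$-safe derivation $\mathscr{E}$ of $F$. In each side of \eqref{epsilonew} this $\forall$I is immediately followed by the $\forall$E instantiating $X$ (by $A$ on the left, by $B$ on the right), so performing these two $\beta\forall$-reductions replaces the occurrences of $\mathscr{D}_1$ by $\mathscr{E}\llbracket A/X\rrbracket$ and $\mathscr{E}\llbracket B/X\rrbracket$; both steps are $\beta$-equalities.

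The crux is to recognise the $\beta$-reduced sides as the two legs of a naturality square. Let $\mathscr{S}$ be the derivation of the variable $X$ from the assumptions $F_1,\dots,F_n$ obtained by appending to $\mathscr{E}$ the elimination spine of \eqref{epsilonew} (the $\forall$E's stripping the $\OV Y_i$ and the $\supset$E's against $F_1,\dots,F_n$). This $\mathscr{S}$ is sp-$X$: all its open assumptions $F_1,\dots,F_n$ and its conclusion $X$ are sp-$X$, and Definition~\ref{sp-qsp} constrains a derivation only through its assumptions and conclusion, so the fact that the internal formula $F$ is \emph{not} sp-$X$ plays no role. It is also $X$-safe, the extra $\forall$E's being inherited from the $X$-safe derivation $\mathscr{D}'$. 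By Theorem~\ref{naturality-th}, $\mathscr{S}$ is therefore $X$-natural.

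Applying the naturality of $\mathscr{S}$ to the test derivation $\mathscr{D}_2$ of $B$ from $A,\Delta$, and using $X\|\mathscr{D}_2\|_X\equiv\mathscr{D}_2$ together with the reading of $\Gamma\|\mathscr{D}_2\|_X$ as the tuple of $F_i$-expansions $F_i\|\mathscr{D}_2\|_X$, the naturality condition becomes the assertion that $\mathscr{S}\llbracket A/X\rrbracket$ followed by $\mathscr{D}_2$ is $=_{\beta}^{2\supset}$ to $\Gamma\|\mathscr{D}_2\|_X$ followed by $\mathscr{S}\llbracket B/X\rrbracket$. The first of these is precisely the $\beta$-reduct of the left-hand side of \eqref{epsilonew}, and the second, whose minor premises are exactly the expansions $F_i\|\mathscr{D}_2\|_X$, is the $\beta$-reduct of its right-hand side. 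Chaining with the $\beta$-equalities of the previous step yields $\mathscr{D}' =_{\beta}^{2\supset}\mathscr{D}''$. I expect the only real work to be the bookkeeping in this last matching: verifying that $\mathscr{S}\llbracket A/X\rrbracket$ followed by $\mathscr{D}_2$ and $\Gamma\|\mathscr{D}_2\|_X$ followed by $\mathscr{S}\llbracket B/X\rrbracket$ coincide, symbol for symbol, with the two reduced sides of \eqref{epsilonew}, and that the spine's $\forall$E-instances (which $X$-safety forces to avoid $X$) indeed make $\mathscr{S}$ both sp-$X$ and $X$-safe so that Theorem~\ref{naturality-th} applies.
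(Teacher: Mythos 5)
Your proposal is correct and follows essentially the same route as the paper's own proof: $\beta$-normalise the closed premise $\mathscr{D}_1$ to an $\forall$I over a derivation of $F$, fire the resulting $\forall$-redex on each side, and then observe that the derivation of $X$ from $F_1,\dots,F_n$ obtained by appending the elimination spine is sp-$X$ and $X$-safe, so that Theorem~\ref{naturality-th} yields the $\beta$-equality of the two reduced sides. The details you flag as bookkeeping (preservation of $X$-safety under $\beta$-reduction, sp-$X$-ness depending only on assumptions and conclusion, and $X\|\mathscr{D}_2\|_X\equiv\mathscr{D}_2$) are exactly the points the paper leaves implicit.
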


\begin{proof}Consider an instance of \eqref{epsilonew} in which ${\D D_1}$ is {\em closed}. Call $\D D'$ and $\mathscr{D}''$ the left-hand side and right-hand side of this instance of \eqref{epsilonew}. We show $\mathscr{D}'=^{2\supset}_{\beta} \mathscr{D}''$.

Since $\mathscr{D}_1$ is closed, it $\beta$-reduces to a derivation ${\mathscr{D}_1^\sharp}\equiv {\small \AXC{${\mathscr{D}_1^\sharp}'$}\noLine\UIC{$F$}\RightLabel{\footnotesize$\forall$I }\UIC{$\forall X F$}\DP}$. Thus the left-hand side and right-hand side $\beta$-reduce to the following two derivations (by first reducing $\mathscr{D}_1$ to ${\mathscr{D}_1^\sharp}$ and then by getting rid of the resulting $\beta$-redex having $\forall X F$ as maximal formula):
\begingroup\makeatletter\def\f@size{10}\check@mathfonts
\begin{lrbox}{\mypti}% Store prooftree in \mypti
\raisebox{2.88ex}[-0ex][0ex]{\begin{varwidth}{\linewidth}
$\cexpD{X}{F_1}{\AXC{$A$}\noLine\UIC{$\mathscr{D}_2$}\noLine\UIC{$B$}\DP}$
\end{varwidth}}
\end{lrbox}
\begin{lrbox}{\mypto}% Store prooftree in \mypti
\raisebox{2.88ex}[-0ex][0ex]{\begin{varwidth}{\linewidth}
$\cexpD{X}{F_n}{\AXC{$A$}\noLine\UIC{$\mathscr{D}_2$}\noLine\UIC{$B$}\DP}$
\end{varwidth}}
\end{lrbox}
\begin{equation*}
\def\defaultHypSeparation{\hskip .05in}\AXC{$\SD{X}{{\mathscr{D}_1^\sharp}'}{A}$}
\noLine
\UnaryInfC{$\SF{X}{F}{A}$}\doubleLine\RightLabel{\footnotesize$\forall $E}\UIC{$\SF{X}{F_{1}\supset \forall \OV Y_{2}(F_{2}\supset \dots \supset \forall \OV Y_{n}(F_{n}\supset X)\dots )}{A}$}
\AxiomC{$\SF{X}{F_{1}}{A}$}
\RightLabel{\footnotesize$\supset$E}
\BinaryInfC{$\SF{X}{\forall \OV Y_{2}(F_{2}\supset \dots \supset \forall \OV Y_{n}(F_{n}\supset X)\dots )}{A}$}
\noLine
\UIC{$\ddots$}
\noLine
\UIC{$\ $}
\noLine
\UIC{$\SF{X}{\forall \OV Y_{n}(F_{n}\supset X)}{A}$}\doubleLine\RightLabel{\footnotesize$\forall$E}\UIC{$\SF{X}{F_{n}\supset X}{A}$}
\AXC{$\SF{X}{F_{n}}{A}$}
\RightLabel{\footnotesize$\supset$E}\insertBetweenHyps{\hspace{-1.2cm}}
\BIC{$A$}
\noLine
\UnaryInfC{$\mathscr{D}_2$}
\noLine
\UnaryInfC{$B$}
\DisplayProof\end{equation*}
\begin{equation*}
\qquad\def\defaultHypSeparation{\hskip .05in}\AXC{$\SD{X}{{\mathscr{D}_1^\sharp}'}{B}$}
\noLine
\UnaryInfC{$\SF{X}{F}{B}$}\doubleLine\RightLabel{\footnotesize$\forall$E}\UIC{$\SF{X}{F_{1}\supset \forall \OV Y_{2}(F_{2}\supset \dots \supset \forall \OV Y_{n}(F_{n}\supset X)\dots )}{B}$}\AxiomC{\usebox{\mypti}}
  \RightLabel{\footnotesize$\supset$E}
  \BIC{$\SF{X}{\forall \OV Y_{2}(F_{2}\supset \dots \supset \forall \OV Y_{n}(F_{n}\supset X)\dots )}{B}$}\
 \noLine
 \UIC{}
 \noLine
 \UIC{$\ddots$}\noLine
\UIC{$\SF{X}{\forall \OV Y_{n}(F_{n}\supset X)}{B}$}\doubleLine\RightLabel{\footnotesize$\forall$E}\UIC{$\SF{X}{F_{n}\supset X}{B}$}\AxiomC{\usebox{\mypto}}
 \insertBetweenHyps{\hspace{-1.3cm}}\RightLabel{\footnotesize$\supset$E}
 \BinaryInfC{$B$}\DP
\end{equation*}
\endgroup
Their $\beta^{2\supset}$-equality follows from fact that, since $\D D_1$ is closed and $X$-safe, so is ${\D D_1^\sharp}'$, and hence the derivation 
\begingroup\makeatletter\def\f@size{10}\check@mathfonts
$$\AXC{${\mathscr{D}_1^\sharp}'$}
\noLine
\UnaryInfC{$F$}\doubleLine\RightLabel{\footnotesize$\forall $E}\UIC{$F_{1}\supset \forall \OV Y_{2}(F_{2}\supset \dots \supset \forall \OV Y_{n}(F_{n}\supset X)\dots )$}
\AxiomC{$F_{1}$}
\RightLabel{\footnotesize$\supset$E}
\BinaryInfC{$\forall \OV Y_{2}(F_{2}\supset \dots \supset \forall \OV Y_{n}(F_{n}\supset X)\dots )$}
\noLine
\UIC{$\ddots$}
\noLine
\UIC{$\ $}
\noLine
\UIC{$\forall \OV Y_{n}(F_{n}\supset X)$}\doubleLine\RightLabel{\footnotesize$\forall$E}\UIC{$F_{n}\supset X$}
\AXC{$F_{n}$}
\RightLabel{\footnotesize$\supset$E}
\BIC{$X$}
\DisplayProof
$$\endgroup
is p-$X$, $X$-safe and thus $X$-natural by theorem \ref{naturality-th}.
\end{proof}

Whereas m-closed instances of \eqref{epsilonew} are included in $=_{\beta}^{2\supset}$, there are instances of \eqref{epsilonew} which are not. This means that the equational theory on \Nd-derivations induced by \eqref{epsilonew} together with \eqref{beta2} and \eqref{betaimp} (we indicate it with $=_{\beta\varepsilon}^{2\supset}$) is a strict extension of the one induced by \eqref{beta2} and \eqref{betaimp} alone. Actually, the same is true if one considers the extension $=_{\beta\eta\varepsilon}^{2\supset}$ of $=_{\beta\eta}^{2\supset}$.

\begin{proposition}\label{epsf}
The equational theory $=_{\beta\eta\varepsilon}^{2\supset}$ strictly extends $=_{\beta\eta}^{2\supset}$.\end{proposition}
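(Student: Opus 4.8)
The plan is to prove the two inclusions separately. The inclusion $=_{\beta\eta}^{2\supset}\ \subseteq\ =_{\beta\eta\varepsilon}^{2\supset}$ is immediate, since $=_{\beta\eta\varepsilon}^{2\supset}$ is generated by the same equations as $=_{\beta\eta}^{2\supset}$ together with the additional schema \eqref{epsilonew}. The whole content therefore lies in exhibiting a single instance of \eqref{epsilonew} whose two sides, while $\varepsilon$-equal by construction, are \emph{not} $\beta\eta$-equal. By Proposition~\ref{elimnat} no m-closed instance can serve this purpose, so I would deliberately pick an instance in which the derivation $\mathscr{D}_1$ of the quasi sp-$X$ premise is \emph{open}.

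Concretely, I would take the quasi sp-$X$ formula to be $\forall X(X\supset X)$, i.e.\ $n=1$, all quantifier blocks $\OV Y_i$ empty, and $F_1\equiv X$ (which is sp-$X$, since a variable is sp-$X$ by Definition~\ref{sp-qsp}). For $\mathscr{D}_1$ I would take the bare undischarged assumption of $\forall X(X\supset X)$, denote it $u$; I would fix an atom $Y$, let the minor-premise derivation of $F_1\ldbrack Y/X\rdbrack\equiv Y$ be an assumption $v$, and let $\mathscr{D}_2$ be the derivation of $Y$ from $Y$ obtained by one application of $\supset$E from an extra undischarged assumption $w\colon Y\supset Y$. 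The key simplification is that, since $F_1\equiv X$, the $C$-expansion occurring on the right-hand side of \eqref{epsilonew} has $C\equiv X$ and hence, by clause~(i) of Definition~\ref{cexp}, equals $\mathscr{D}_2$ itself; all the $\forall$E/$\supset$I scaffolding of the $C$-expansion collapses, as do the (empty) $\forall\OV Y_i$-eliminations.

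With these choices the two sides of the instance reduce, reading derivations as System~$\B F$ terms, to $w\,(u\,Y\,v)$ on the left (instantiate $u$ at $Y$, apply to $v$, then run $\mathscr{D}_2$) and to $u\,Y\,(w\,v)$ on the right (feed $\mathscr{D}_2$ applied to $v$ into $u$ instantiated at $Y$). Both derivations are already $\beta\eta$-normal, and they are syntactically distinct, since their principal branches begin with different assumptions ($w$ on the left, $u$ on the right). I would then invoke the confluence and strong normalization of $\beta\eta$-reduction for \Nd{} (System~$\B F$), already implicit elsewhere in the paper: two distinct $\beta\eta$-normal derivations are never $\beta\eta$-equal. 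Hence $w\,(u\,Y\,v)\neq_{\beta\eta}^{2\supset}u\,Y\,(w\,v)$, whereas they are $=_\varepsilon$ by a single application of \eqref{epsilonew} and therefore $=_{\beta\eta\varepsilon}^{2\supset}$-equal, which establishes strictness.

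The main obstacle is the last step: making sure the chosen instance genuinely escapes $=_{\beta\eta}^{2\supset}$ rather than being identified by some $\eta$-expansion. This is exactly why I would keep the extra assumption $w$, so that the two normal forms have different heads, and why I would \emph{not} take $\mathscr{D}_2$ to be the identity: with $\mathscr{D}_2$ the identity the instance degenerates to the (trivially $\beta\eta$-valid) naturality square of the identity functor, and the two sides coincide. Checking that both displayed derivations are in $\beta\eta$-long normal form and comparing their heads is routine once $w$ is present; the only external ingredient is the standard confluence/normalization theorem for $\beta\eta$ in System~$\B F$.
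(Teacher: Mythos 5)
Your proof is correct and follows essentially the same route as the paper's: both instantiate \eqref{epsilonew} at the quasi sp-$X$ formula $\forall X(X\supset X)$ with an open assumption as the major-premise derivation, obtain two distinct $\beta\eta$-normal derivations (in term notation $w\,(u\,Y\,v)$ versus $u\,Y\,(w\,v)$), and conclude by the Church-Rosser property that distinct $\beta\eta$-normal forms cannot be $\beta\eta$-equal. If anything, you are slightly more careful than the paper, which asserts distinctness of the two sides for \emph{every} $\beta\eta$-normal $\mathscr{D}'$ --- a claim that fails when $\mathscr{D}'$ is the bare assumption, so that $A\equiv B$ and the two sides coincide --- whereas you explicitly rule out this degenerate case by insisting on the extra assumption $w$.
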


\begin{proof}  
For any derivation $\mathscr{D}'$ of $B$ from $A$ we have that 
\begingroup\makeatletter\def\f@size{10}\check@mathfonts
\begin{lrbox}{\mypti}% Store prooftree in \mypti
\raisebox{2.88ex}[-0ex][0ex]{
\begin{varwidth}{\linewidth}
$\cexpD{X}{X}{\AXC{$A$}\noLine\UIC{$\mathscr{D}'$}\noLine\UIC{$B$}\DP}$
\end{varwidth}
}
\end{lrbox}
\begin{equation*}
\def\extraVskip{2pt}\AxiomC{$\forall X (X\supset X)$}\RightLabel{\footnotesize$\forall $E}\UIC{$A\supset A$}
\AxiomC{$A$}
\RightLabel{\footnotesize$\supset$E}
\BinaryInfC{$A$}
\noLine
\UnaryInfC{$\D D'$}
\noLine
\UnaryInfC{$B$}
\DisplayProof
\  \equiv\ 
\def\extraVskip{2pt}\AxiomC{$\forall X (X\supset X)$}\RightLabel{\footnotesize$\forall $E}\UIC{$\SF{X}{X\supset X}{A}$}\AXC{$\SF{X}{X}{A}$}\RightLabel{\footnotesize$\supset $E}\BIC{$A$}\noLine\UIC{$\mathscr{D}'$}\noLine\UIC{$B$}\DP
\   =_{\varepsilon}
\end{equation*}
\begin{equation*}
=_{\varepsilon}\   \def\extraVskip{2pt}\AXC{$\forall X (X\supset X)$}\RightLabel{\footnotesize$\forall $E}\UIC{$\SF{X}{X\supset X}{B}$}\AXC{\usebox{\mypti}}\BinaryInfC{$B$}\DP\  \equiv\ \ \def\extraVskip{2pt}\AxiomC{$\forall X (X\supset X)$}\RightLabel{\footnotesize$\forall $E}\UIC{$B\supset B$}\AxiomC{$A$}\def\extraVskip{2pt}
\noLine
\UnaryInfC{$\D D'$}
\noLine
\UnaryInfC{$B$}
\RightLabel{\footnotesize$\supset$E}
\BinaryInfC{$B$}
\DisplayProof
\end{equation*}
\endgroup
However, whenever $\mathscr{D}'$ is $\beta\eta$-normal, 
the two member of the instance of \eqref{epsilonew} just considered are distinct $\beta\eta$-normal derivations and therefore (by the Church-Rosser property of the rewriting relation induced by $\beta$- and $\eta$-reductions in \Nd) they are not $\beta\eta$-equivalent. 
\end{proof}

\begin{remark}
It is worth stressing that this extension of $=_{\beta\eta}^{2\supset}$ is  \emph{consistent}: there exist derivations of the same conclusion from the same undischarged assumptions which are not identified by $=_{\beta\eta\varepsilon}^{2\supset}$. This follows from the fact \cite{Bainbridge1990, Freyd88,Girard1992} that there are models of System $\B F$ satisfying the ``dinatural'' generalization of  \eqref{epsilonew} (see remark \ref{din-rem}).

Observe that via Curry-Howard, one can consider $=_{\beta\eta\varepsilon}$ as an equivalence over $\lambda$-terms. Whereas \eqref{epsilonew} consistently extends $\beta\eta$-equality in System $\B F$, in the untyped case $=_{\beta\eta}$ is maximal (as a consequence of B\"ohm's theorem) and thus $=_{\beta\eta\varepsilon}$ is inconsistent over untyped lambda terms (i.e.~$s=_{\beta\eta\varepsilon} t$ for all terms $s,t$). 
\end{remark}

\subsection{RP-translation and $\varepsilon$-equations}\label{rpdisj}
As observed in section \ref{sec-rp}, the RP-translation {\em does not} map $=_{\gamma_g}^{\vee}$ into either $=_{\beta}^{2\supset}$ or $=_{\beta\eta}^{2\supset}$, in the sense that $\mathscr{D}_1=_{\gamma_g}^{\vee}\mathscr{D}_2$ implies neither $\mathscr{D}^*_1=_{\beta}^{2\supset}\mathscr{D}^*_2$ nor $\mathscr{D}_1^*=_{\beta\eta}^{2\supset}\mathscr{D}_2^*$. However, the RP-translation {\em does} map $=_{\gamma_g}^{\vee}$ into $=_{\beta\eta\varepsilon}^{2\supset}$, in fact into $=_{\beta\varepsilon}$ alone. More precisely,

\begin{proposition}[$=_{\gamma_g}^{\vee}\ \stackrel{*}{\mapsto}\ =_{\beta\varepsilon}$]\label{conv}
Let $\D D'$ and $\mathscr{D}''$ be, respectively, the left-hand side and right-hand side of \eqref{gammaorG}. 
One has $\mathscr{D}'^{*}=_{\beta\varepsilon} \mathscr{D}''^{*}$.
\end{proposition}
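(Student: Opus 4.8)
The plan is to compute the RP-translations $\mathscr{D}'^{*}$ and $\mathscr{D}''^{*}$ of the two sides of \eqref{gammaorG} explicitly from Definition \ref{rp-derivation}, and to recognise their difference as a single instance of the $\varepsilon$-equation \eqref{epsilonew} applied to the quasi sp-$X$ formula $(A\lor B)^{*}$. Write $\mathscr{D}$ for the subderivation of $A\lor B$ occurring in \eqref{gammaorG}, $\mathscr{D}_1,\mathscr{D}_2$ for the two minor derivations of $C$, and $\mathscr{D}_3$ for the derivation of $D$ from $C$ appended below the $\lor$E. By Remark \ref{qsip}, $(A\lor B)^{*}\equiv \forall X((A^{*}\supset X)\supset(B^{*}\supset X)\supset X)$ is quasi sp-$X$, with $F_1\equiv A^{*}\supset X$ and $F_2\equiv B^{*}\supset X$ both sp-$X$; by Proposition \ref{epsfree} (and Remark \ref{alfa}) we may assume $X$ is chosen so that it occurs free neither in any of $\mathscr{D}^{*},\mathscr{D}_1^{*},\mathscr{D}_2^{*},\mathscr{D}_3^{*}$ nor in $A^{*},B^{*},C^{*},D^{*}$.

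Reading off Definition \ref{rp-derivation}, $\mathscr{D}'^{*}$ consists of $\mathscr{D}^{*}$ followed by $\forall$E instantiating $X$ at $C^{*}$, then two applications of $\supset$E whose minor premises are the derivation $\mathscr{E}_1$ of $A^{*}\supset C^{*}$ (an $\supset$I over $\mathscr{D}_1^{*}$ discharging $A^{*}$) and the derivation $\mathscr{E}_2$ of $B^{*}\supset C^{*}$ (an $\supset$I over $\mathscr{D}_2^{*}$ discharging $B^{*}$), reaching $C^{*}$, and finally $\mathscr{D}_3^{*}$ carrying $C^{*}$ to $D^{*}$. This is \emph{literally} the left-hand side of the instance of \eqref{epsilonew} in which the quasi sp-$X$ formula is $(A\lor B)^{*}$, the derivation $\mathscr{D}_1$ of \eqref{epsilonew} is $\mathscr{D}^{*}$, the derivation $\mathscr{D}_2$ of \eqref{epsilonew} is $\mathscr{D}_3^{*}$ (so that $A\equiv C^{*}$ and $B\equiv D^{*}$ in the notation of \eqref{epsilonew}), and the two minor-premise leaves $F_1\ldbrack C^{*}/X\rdbrack,F_2\ldbrack C^{*}/X\rdbrack$ have been filled, respectively, by $\mathscr{E}_1$ and $\mathscr{E}_2$. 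Because \eqref{epsilonew} is a schema in arbitrary subderivations and $=_{\varepsilon}$ is closed under substitution (it is a congruence on the syntactic category of derivations, so grafting $\mathscr{E}_1,\mathscr{E}_2$ onto those undischarged leaves is legitimate), we obtain $\mathscr{D}'^{*}=_{\varepsilon}\mathscr{R}$, where $\mathscr{R}$ is the right-hand side of the same $\varepsilon$-instance with $\mathscr{E}_1,\mathscr{E}_2$ grafted in.

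It then remains to check $\mathscr{R}=_{\beta}^{2\supset}\mathscr{D}''^{*}$. In $\mathscr{R}$ the spine of $(A\lor B)^{*}$ is now instantiated at $D^{*}$, and the minor premises along it are the $C$-expansions $F_1\|\mathscr{D}_3^{*}\|_X$ and $F_2\|\mathscr{D}_3^{*}\|_X$. Computing $F_1\|\mathscr{D}_3^{*}\|_X$ from Definition \ref{cexp} (case iii, with $F=A^{*}$ not containing $X$ and $G=X$, so that $X\|\mathscr{D}_3^{*}\|_X\equiv\mathscr{D}_3^{*}$ by case i) yields a derivation of $A^{*}\supset D^{*}$ from the assumption $A^{*}\supset C^{*}$: apply $\supset$E to that assumption and a fresh $[A^{*}]$, then $\mathscr{D}_3^{*}$, then $\supset$I discharging $[A^{*}]$. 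Grafting $\mathscr{E}_1$ onto its open assumption $A^{*}\supset C^{*}$ creates exactly one $\supset$-redex (an $\supset$I over $\mathscr{D}_1^{*}$ immediately consumed by $\supset$E); one \eqref{betaimp}-reduction collapses it to the $\supset$I over $\mathscr{D}_1^{*}$ followed by $\mathscr{D}_3^{*}$, discharging $A^{*}$, which is precisely the minor premise $A^{*}\supset D^{*}$ occurring in $\mathscr{D}''^{*}$. The case of $F_2$ is identical. After these two $\beta$-reductions $\mathscr{R}$ becomes $\mathscr{D}''^{*}$, so $\mathscr{R}=_{\beta}^{2\supset}\mathscr{D}''^{*}$, and hence $\mathscr{D}'^{*}=_{\beta\varepsilon}\mathscr{D}''^{*}$ by transitivity.

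The main obstacle is the bookkeeping in the third paragraph: verifying that the $C$-expansions prescribed by the right-hand side of \eqref{epsilonew} are, once the $\mathscr{E}_i$ are grafted on and the resulting redexes contracted, \emph{syntactically} the branch derivations produced by the RP-translation of $\mathscr{D}''$. One must also be explicit that $(A\lor B)^{*}$ has empty quantifier blocks $\OV Y_i$, so that the iterated $\forall$E steps displayed in \eqref{epsilonew} are vacuous, and that the $\varepsilon$-equation is being used in its \emph{unrestricted} form: no $X$-safety or m-closedness hypothesis is needed here, those being required only for Proposition \ref{elimnat}.
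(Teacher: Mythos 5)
Your proof is correct and takes essentially the same route as the paper's own: it identifies $\mathscr{D}'^{*}$ as a substitution instance of the left-hand side of \eqref{epsilonew} with $F\equiv(A^{*}\supset X)\supset(B^{*}\supset X)\supset X$, the $\mathscr{D}_2$ of the schema taken to be $\mathscr{D}_3^{*}$ and the minor-premise assumptions filled by the $\supset$I-derivations over $\mathscr{D}_1^{*}$ and $\mathscr{D}_2^{*}$, applies $=_{\varepsilon}$, and then collapses the two resulting $\supset$-redexes by \eqref{betaimp} to reach $\mathscr{D}''^{*}$. The only differences are cosmetic: you spell out the appeal to closure under substitution, the vacuity of the quantifier blocks $\OV Y_i$, and the unfolding of the $C$-expansions, all of which the paper handles implicitly in its displayed derivations.
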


\begin{proof}
Since $(A\vee B)^*\equiv \forall X ((A^*\supset X)\supset (B^*\supset X)\supset X)$, by proposition \ref{elimnat} and remark \ref{epsfree} we have that
\begingroup\makeatletter\def\f@size{10}\check@mathfonts
\begin{lrbox}{\mypto}% Store prooftree in \mypti
\raisebox{2.88ex}[7.7ex][0ex]{\begin{varwidth}{\linewidth}
$\cexpD{X}{A^{*}\supset X}{\!\!\AXC{$C^*$}\noLine\UIC{$\mathscr{D}_3$}\noLine\UIC{$D^*$}\DP\!\!}$
\end{varwidth}}
\end{lrbox}
\begin{lrbox}{\mypta}% Store prooftree in \mypti
\raisebox{2.88ex}[7.7ex][0ex]{\begin{varwidth}{\linewidth}
$\cexpD{X}{B\supset X}{\!\!\AXC{$C^*$}\noLine\UIC{$\mathscr{D}_3$}\noLine\UIC{$D^*$}\DP\!\!}$\end{varwidth}}
\end{lrbox}
\begin{equation*}
\mathscr{D}'^*\ \equiv\ \def\defaultHypSeparation{\hskip .01in}\AxiomC{$\mathscr{D}^*$}\noLine\UIC{$(A\vee B)^*$}
\RightLabel{\footnotesize$\forall$E}
\UnaryInfC{$(A^*\supset C^*)\supset (B^*\supset C^*)\supset C^*$}
\AxiomC{$\stackrel{n_1}{[A^*]}$}\noLine\UIC{$\mathscr{D}_1^*$}\noLine\UIC{$C^*$}\RightLabel{\footnotesize$\supset$I $(n_1)$}\UIC{$A^*\supset C^*$}
\RightLabel{\footnotesize$\supset$E}
\BinaryInfC{$(B^*\supset C^*)\supset C^*$}
\AxiomC{$\stackrel{n_2}{[B^*]}$}\noLine\UIC{$\mathscr{D}_2^*$}\noLine\UIC{$C^*$}\RightLabel{\footnotesize$\supset$I $(n_2)$}\UIC{$B^*\supset C^*$}
\RightLabel{\footnotesize$\supset$E}
\BinaryInfC{$C^*$}
\noLine
\UnaryInfC{$\D D_3^*$}
\noLine
\UnaryInfC{$D^*$}
\DisplayProof\end{equation*}
$$\equiv$$
\begin{equation*}\def\extraVskip{2pt}\def\defaultHypSeparation{\hskip .01in}\AxiomC{$\mathscr{D}^*$}\noLine\UIC{$\forall X ((A^*\supset X)\supset (B^*\supset X)\supset X)$}
\RightLabel{\footnotesize$\forall$E}
\UnaryInfC{$\SF{X}{A^*\supset X}{C^*\!}\supset \SF{X}{B^*\supset X}{C^*\!}\supset C^*$}
\AxiomC{$\stackrel{n_1}{[A^*]}$}\noLine\UIC{$\mathscr{D}_1^*$}\noLine\UIC{$C^*$}\RightLabel{\footnotesize$\supset$I $(n_1)$}\UIC{$\SF{X}{A^*\supset X}{C^*\!}$}
\RightLabel{\footnotesize$\supset$E}
\BinaryInfC{$\SF{X}{B^*\supset X}{C^*\!}\supset C^*$}
\AxiomC{$\stackrel{n_2}{[B^*]}$}\noLine\UIC{$\mathscr{D}_2^*$}\noLine\UIC{$C^*$}\RightLabel{\footnotesize$\supset$I $(n_2)$}\UIC{$\SF{X}{B^*\supset X}{C^*\!}$}
\RightLabel{\footnotesize$\supset$E}\def\defaultHypSeparation{\hskip -.23in}
\BinaryInfC{$C^*$}
\noLine
\UnaryInfC{$\D D_3^*$}
\noLine
\UnaryInfC{$D^*$}
\DisplayProof
\end{equation*}
\nopagebreak[5]
$$\ =_\varepsilon$$
\begin{equation*}
\def\defaultHypSeparation{\hskip .01in}\def\extraVskip{2pt}\AxiomC{$\mathscr{D}^*$}\noLine\UIC{$\forall X ((A^*\supset X)\supset (B^*\supset X)\supset X)$}\RightLabel{\footnotesize$\forall $E}\UIC{$\SF{X}{A^*\supset X}{D}\supset \SF{X}{B^*\supset X}{D}\supset D$}\AxiomC{$\stackrel{n_1}{[A^*]}$}\noLine\UIC{$\mathscr{D}_1^*$}\noLine\UIC{$C^*$}\RightLabel{\footnotesize$\supset$I $(n_1)$}\UIC{\usebox{\mypto}}\RightLabel{\footnotesize$\supset$E}\BIC{$\SF{X}{B^*\supset X}{D}\supset D$}\AxiomC{$\stackrel{n_2}{[B^*]}$}\noLine\UIC{$\mathscr{D}_2^*$}\noLine\UIC{$C^*$}\RightLabel{\footnotesize$\supset$I $(n_2)$}\UIC{\usebox{\mypta}}\insertBetweenHyps{\hspace{-.5cm}}\RightLabel{\footnotesize$\supset$E}\BIC{$D$}\DP
\end{equation*}
$$\equiv$$
\begin{equation*}
\def\defaultHypSeparation{\hskip .01in}\AxiomC{$\mathscr{D}^*$}\noLine\UIC{$(A\vee B)^*$}
\RightLabel{\footnotesize$\forall$E}
\UnaryInfC{$(A^*\supset D)\supset (B^*\supset D)\supset D$}
\AxiomC{$\stackrel{n_1}{[A^*]}$}\noLine\UIC{$\mathscr{D}_1^*$}\noLine\UIC{$C^*$}\RightLabel{\footnotesize$\supset$I $(n_1)$}\UIC{$A^*\supset C$}\AxiomC{$\stackrel{m_1}{A^{*}}$}\RightLabel{\footnotesize$\supset$E}
\BIC{$C$}
\noLine
\UnaryInfC{$\D D_3^*$}
\noLine
\UnaryInfC{$D$}
\RightLabel{\footnotesize$\supset$I $(m_1)$}
\UnaryInfC{$A^*\supset D$}
\RightLabel{\footnotesize$\supset$E}\insertBetweenHyps{\hspace{-.7cm}}
\BinaryInfC{$(B^*\supset D)\supset D$}
\AxiomC{$\stackrel{n_2}{[B^*]}$}\noLine\UIC{$\mathscr{D}_2^*$}\noLine\UIC{$C^*$}\RightLabel{\footnotesize$\supset$I $(n_2)$}\UIC{$B^*\supset C^*$}\AxiomC{$\stackrel{m_2}{B^{*}}$}\RightLabel{\footnotesize$\supset$E}
\BIC{$C$}
\noLine
\UnaryInfC{$\D D_3^*$}
\noLine
\UnaryInfC{$D$}
\RightLabel{\footnotesize$\supset$I $(m_2)$}
\UnaryInfC{$B^*\supset D$}
\RightLabel{\footnotesize$\supset$E}
\BinaryInfC{$D$}
\DisplayProof
\end{equation*}\bigskip
$$\ =_{\beta}^\supset \D D''^*$$
\endgroup
\end{proof}

\begin{remark}
By inspecting the proof of proposition \ref{conv} it is clear that for m-closed instances of $=_{\gamma_g}^{\vee}$, we have that $\mathscr{D}'=_{\gamma_g^{mc}}^\vee \mathscr{D}''$ implies $\mathscr{D}'^*=_{\varepsilon^{mc}\beta}^{\supset} \mathscr{D}''^*$. Thus, proposition \ref{conv} together with theorem \ref{naturality-th} provides an alternative way to establish corollary \ref{mc-gammapres}. 
\end{remark}

Not only does the RP-translation map $=_{\gamma_g}^{\vee}$ into $=_{\varepsilon}$, but it also maps $=_{\eta}^{\vee}$ into $=_{\beta\eta\varepsilon}^{2\supset}$. More precisely,

\begin{proposition}[$=_{\eta}^{\vee}\ \stackrel{*}{\mapsto}\ =_{\beta\eta\varepsilon}^{2\supset}$]\label{etaa}
Let 
$\D D'$ and $\mathscr{D}''$ be, respectively, the left-hand side and right-hand side of \eqref{etaor}. 
One has $\mathscr{D}'^{*}=_{\beta\eta\varepsilon}^{2\supset} \mathscr{D}''^{*}$.
\end{proposition}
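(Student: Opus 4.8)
The plan is to make both translations explicit, exhibit $\mathscr{D}'^{*}$ as a Church-style case-expression applied to the two injections, and collapse it to $\mathscr{D}''^{*}$ by a single $\varepsilon$-step sandwiched between $\eta$-expansions. Write $t := \mathscr{D}^{*} = \mathscr{D}''^{*}$, a derivation of $(A\vee B)^{*}\equiv\forall X((A^{*}\supset X)\supset(B^{*}\supset X)\supset X)$; by Remark \ref{qsip} this formula is quasi sp-$X$, and by Proposition \ref{epsfree} we may fix $X$ so that in addition $\mathscr{D}'^{*}$, hence $t$, is $X$-safe. Since the two minor premises of the $\vee$E in \eqref{etaor} are the injections $[A]/\vee$I$_{1}$ and $[B]/\vee$I$_{2}$, the $\vee$E-clause of Definition \ref{rp-derivation} gives $\mathscr{D}'^{*} = t[(A\vee B)^{*}]\,\iota_{A}\,\iota_{B}$, where instantiation abbreviates $\forall$E, application abbreviates $\supset$E, and $\iota_{A}\colon A^{*}\supset(A\vee B)^{*}$, $\iota_{B}\colon B^{*}\supset(A\vee B)^{*}$ are the closed derivations $\supset$I$(\mathscr{D}_{1}^{*})$, $\supset$I$(\mathscr{D}_{2}^{*})$ obtained by discharging the translated injection-assumptions.

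First I would $\eta$-expand $\mathscr{D}'^{*}$: one use of \eqref{eta2} and two of \eqref{etaimp} rewrite it into the derivation that opens fresh assumptions $f\colon A^{*}\supset X$ and $g\colon B^{*}\supset X$, forms the body $\mathscr{D}'^{*}[X]\,f\,g\colon X$ (by $\forall$E at $X$ followed by two $\supset$E), and closes everything off with $\supset$I, $\supset$I, $\forall$I. As $\Lambda X.\lambda f\lambda g.\,t[X]\,f\,g =_{\eta} t$, it then suffices to prove the body identity $\mathscr{D}'^{*}[X]\,f\,g =_{\beta\eta\varepsilon}^{2\supset} t[X]\,f\,g$.

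The heart of the argument is a single application of \eqref{epsilonew}, used as a congruence exactly as in the proof of Proposition \ref{conv}. Take $\mathscr{D}_{1} := t$ (of the quasi sp-$X$ formula $(A\vee B)^{*}$) and $\mathscr{D}_{2} := h$, the derivation of $X$ from the assumption $(A\vee B)^{*}$ (and side-assumptions $f,g$) given by $\forall$E at $X$ and two $\supset$E with $f$ and $g$; note that \eqref{epsilonew} imposes no $X$-safety requirement on $\mathscr{D}_{2}$, so the fact that $h$ instantiates at $X$ is harmless. Feeding $\iota_{A},\iota_{B}$ into the two argument positions, the left-hand side of this $\varepsilon$-instance is precisely the body $\mathscr{D}'^{*}[X]\,f\,g$ (appending $h$ below $(A\vee B)^{*}$ re-creates the $\forall$E/$\supset$E/$\supset$E of the body, and since $\mathscr{D}'^{*}$ ends in an elimination no redex is produced). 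Its right-hand side is $t[X]$ fed with the two $C$-expansions of $h$, namely $(A^{*}\supset X)\|h\|_{X}$ with $\iota_{A}$ in its open-assumption position and $(B^{*}\supset X)\|h\|_{X}$ with $\iota_{B}$.

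Finally I would evaluate these expansions. By Definition \ref{cexp} the first is $\lambda a^{A^{*}}.\,h(\iota_{A}\,a)$; since $\iota_{A}\,a$ $\beta$-reduces to the first-injection term $\Lambda X'.\lambda k\lambda l.\,k\,a$, the subterm $h(\iota_{A}\,a)$ $\beta$-reduces to $f\,a$, so the whole expansion is $=_{\beta\eta} f$, and symmetrically the second is $=_{\beta\eta} g$. Hence the $\varepsilon$-rewritten body equals $t[X]\,f\,g$ up to $=_{\beta\eta}$, which establishes the body identity and therefore $\mathscr{D}'^{*} =_{\beta\eta\varepsilon}^{2\supset}\mathscr{D}''^{*}$. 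The main obstacle is the bookkeeping of this lone $\varepsilon$-step: correctly identifying $\mathscr{D}_{2}=h$, matching the schema's argument slots to $\iota_{A},\iota_{B}$, and checking that the $C$-expansions of $h$ composed with the injections collapse under $\beta\eta$ to exactly the $\eta$-variables $f$ and $g$. The role of Proposition \ref{epsfree} is to license the choice of $X$ making $(A\vee B)^{*}$ genuinely quasi sp-$X$, so that \eqref{epsilonew} applies in the first place.
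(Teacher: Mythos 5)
Your proof is correct and takes essentially the same route as the paper's: an initial $\eta$-expansion (one \eqref{eta2} plus two \eqref{etaimp} steps), a single $\varepsilon$-step in which the unpacking derivation $h$ (the paper's $\mathscr{D}_2$) is permuted across the instantiation of $(A\vee B)^{*}$, and a final $\beta\eta$-collapse of the $C$-expansions composed with the injections $\iota_A,\iota_B$ down to the assumptions $f$ and $g$. The only difference is presentational: you isolate the body under the binders and invoke closure under substitution, whereas the paper carries the outer $\supset$I/$\forall$I introductions explicitly through each equational step.
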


\begin{proof}
We will use $A^*\curlyvee B^*$ as a shorthand for $(A^{*}\supset X)\supset (B^{*}\supset X)\supset X$. Thus 
\begin{itemize}
 \item $\SF{X}{A^{*}\curlyvee B^{*}}{A^*\vee B^*}\equiv (A^{*}\supset (A\vee B)^*)\supset (B^{*}\supset (A\vee B)^*)\supset (A\vee B)^*$
 \item$\SF{X}{A^{*}\curlyvee B^{*}}{X}\equiv A^{*}\curlyvee B^{*}$
\end{itemize}
  Moreover we set 
\begingroup\makeatletter\def\f@size{10}\check@mathfonts 
 $$\AXC{$(A\vee B)^*$}\noLine\UIC{$\mathscr{D}_2$}\noLine\UIC{$X$}\DP\ \  \equiv\ \ \ \  \def\defaultHypSeparation{\hskip .01in}\AXC{$(A\vee B)^*$}\RightLabel{\footnotesize$\forall $E}\UIC{$A^*\curlyvee B^*$}\AXC{$A^*\supset X$}\RightLabel{\footnotesize$\supset $E}\BIC{$(B^*\supset X)\supset X$}\AXC{$B^*\supset X$}\RightLabel{\footnotesize$\supset $E}\BIC{$X$}\DP$$\endgroup
 (thereby leaving the assumptions $A^{*}\supset X$ and $B^{*}\supset X$ implicit) and
\begingroup\makeatletter\def\f@size{10}\check@mathfonts 
$$\mathscr{D}_A \ \ \equiv  \ \ 
\AxiomC{$\stackrel{n_1}{A^{*}\supset X}$}
\AxiomC{$\stackrel{n_2}{A^{*}}$}
\RightLabel{\footnotesize$\supset$E}
\BinaryInfC{$X$}
\RightLabel{\footnotesize$\supset$I}
\UnaryInfC{$(B^{*}\supset X)\supset X$}
\RightLabel{\footnotesize$\supset$I $(n_1)$}
\UnaryInfC{$A^{*}\curlyvee B^{*}$}
\RightLabel{\footnotesize$\forall I$}
\UnaryInfC{$(A\lor B)^{*}$}
\RightLabel{\footnotesize$\supset$I $(n_2)$}
\UnaryInfC{$A^{*}\supset (A\lor B)^{*}$}\DP\ \  \equiv  \ \ 
\AxiomC{$\stackrel{n_1}{A^{*}\supset X}$}
\AxiomC{$\stackrel{n_2}{A^{*}}$}
\RightLabel{\footnotesize$\supset$E}
\BinaryInfC{$X$}
\RightLabel{\footnotesize$\supset$I}
\UnaryInfC{$(B^{*}\supset X)\supset X$}
\RightLabel{\footnotesize$\supset$I $(n_1)$}
\UnaryInfC{$A^{*}\curlyvee B^{*}$}
\RightLabel{\footnotesize$\forall I$}
\UnaryInfC{$(A\lor B)^{*}$}
\RightLabel{\footnotesize$\supset$I $(n_2)$}
\UnaryInfC{$\SF{X}{A^{*}\supset X}{A^*\vee B^*}$}\DP
$$
$$\mathscr{D}_B \ \ \equiv  \ \ 
\AxiomC{$\stackrel{m_1}{B^{*}\supset X}$}
\AxiomC{$\stackrel{m_2}{B^{*}}$}
\RightLabel{\footnotesize$\supset$E}
\BinaryInfC{$X$}
\RightLabel{\footnotesize$\supset$I $(m_1)$}
\UnaryInfC{$(B^{*}\supset X)\supset X$}
\RightLabel{\footnotesize$\supset$I}
\UnaryInfC{$A^{*}\curlyvee B^{*}$}
\RightLabel{\footnotesize$\forall I$}
\UnaryInfC{$(A\lor B)^{*}$}
\RightLabel{\footnotesize$\supset$I $(m_2)$}
\UnaryInfC{$B^{*}\supset (A\lor B)^{*}$}\DP\ \ \equiv \ \ 
\AxiomC{$\stackrel{m_1}{B^{*}\supset X}$}
\AxiomC{$\stackrel{m_2}{B^{*}}$}
\RightLabel{\footnotesize$\supset$E}
\BinaryInfC{$X$}
\RightLabel{\footnotesize$\supset$I $(m_1)$}
\UnaryInfC{$(B^{*}\supset X)\supset X$}
\RightLabel{\footnotesize$\supset$I}
\UnaryInfC{$A^{*}\curlyvee B^{*}$}
\RightLabel{\footnotesize$\forall I$}
\UnaryInfC{$(A\lor B)^{*}$}
\RightLabel{\footnotesize$\supset$I $(m_2)$}
\UnaryInfC{$\SF{X}{B^{*}\supset X}{A^*\vee B^*}$}\DP
$$\endgroup

By proposition \ref{elimnat} and remark \ref{epsfree} we have that $\mathscr{D}'^*\equiv $
\begingroup\makeatletter\def\f@size{10}\check@mathfonts 
$$\equiv \ \AxiomC{$\D D^{*}$}
\noLine
\UnaryInfC{$(A\lor B)^{*}$}
\RightLabel{\footnotesize$\forall E$}
\UnaryInfC{$\SF{X}{A^{*}\curlyvee B^{*}}{A^*\vee B^*}$}
\AxiomC{$\mathscr{D}_A$}\noLine
\UnaryInfC{$A^{*}\supset (A\lor B)^{*}$}
\RightLabel{\footnotesize$\supset$E}
\BinaryInfC{$(B^*\supset (A\lor B)^{*})\supset (A\lor B)^{*}$}
\AxiomC{$\mathscr{D}_B$}\noLine\UnaryInfC{$B^{*}\supset (A\lor B)^{*}$}
\RightLabel{\footnotesize$\supset$E}
\BinaryInfC{$(A\lor B)^{*}$}
\DisplayProof
\ =_{\eta}^{2\supset}$$
\begin{lrbox}{\mypta}% Store prooftree in \mypti
\begin{varwidth}{\linewidth}
$\SF{X}{X}{\AXC{$(A\vee B)^*$}\noLine\UIC{$\mathscr{D}_2$}\noLine\UIC{$X$}\DP}$
\end{varwidth}
\end{lrbox}
\begin{lrbox}{\mypto}% Store prooftree in \mypti
\raisebox{3.25ex}[7.7ex][0ex]{\begin{varwidth}{\linewidth}
$\SF{X}{A^*\supset X}{\!\!\!\!\AXC{$(A\vee B)^*$}\noLine\UIC{$\mathscr{D}_2$}\noLine\UIC{$X$}\DP\!\!\!\!}$
\end{varwidth}}
\end{lrbox}
\begin{lrbox}{\mypti}% Store prooftree in \mypti
\raisebox{3.25ex}[7.7ex][0ex]{\begin{varwidth}{\linewidth}
$\SF{X}{B^*\supset X}{\AXC{$(A\vee B)^*$}\noLine\UIC{$\mathscr{D}_2$}\noLine\UIC{$X$}\DP}$
\end{varwidth}}
\end{lrbox}
%
% \begin{lrbox}{\myptu}% Store prooftree in \mypti
% \begin{varwidth}{1.8cm}
% $\phantom{A}$
% 
% \vspace{-1.8cm}
% $\AxiomC{$\mathscr{D}_B$}\noLine\UnaryInfC{\usebox{\mypti}}\DP$
% \end{varwidth}
% \end{lrbox}
%
$$=_{\eta}^{2\supset}\ \def\defaultHypSeparation{\hskip .1in}\AxiomC{$\D D^{*}$}
\noLine
\UnaryInfC{$(A\lor B)^{*}$}
\RightLabel{\footnotesize$\forall E$}
\UnaryInfC{$\SF{X}{A^{*}\curlyvee B^{*}}{A^*\vee B^*}$}
  \AxiomC{$\mathscr{D}_A$}\noLine\UnaryInfC{$\SF{X}{A^*\supset X}{(A\lor B)^{*}}$}
  \BIC{$\SF{X}{B^*\supset X}{(A\lor B)^{*}}\supset (A\lor B)^{*}$}
  \AxiomC{$\mathscr{D}_B$}\noLine
  \UnaryInfC{$\SF{X}{B^*\supset X}{(A\lor B)^{*}}$}
\BIC{$(A\lor B)^{*}$}\noLine\UnaryInfC{$\D D_2$}\noLine\UIC{$X$}\RightLabel{\footnotesize$\supset$I}\UIC{$(B^*\supset X)\supset X$}\RightLabel{\footnotesize$\supset$I}\UIC{$A^*\curlyvee B^*$}\RightLabel{\footnotesize$\forall$I}\UIC{$(A\vee B)^*$}\DP \ =_{\varepsilon}$$
$$=_{\varepsilon} \ \AxiomC{$\D D^{*}$}
\noLine
\UnaryInfC{$(A\lor B)^{*}$}
\RightLabel{\footnotesize$\forall E$}
\UnaryInfC{$\SF{X}{A^{*}\curlyvee B^{*}}{X}$}
  \AxiomC{$\mathscr{D}_A$}\noLine
  \UnaryInfC{\usebox{\mypto}}\BIC{$\SF{X}{B^*\supset X}{X}$}
 \AxiomC{$\mathscr{D}_B$}\noLine\UnaryInfC{\usebox{\mypti}}
\BIC{$X$}\RightLabel{\footnotesize$\supset$I}\UIC{$(B^*\supset X)\supset X$}\RightLabel{\footnotesize$\supset$I}\UIC{$A^*\curlyvee B^*$}\RightLabel{\footnotesize$\forall$I}\UIC{$(A\vee B)^*$}\DP\ \equiv$$
$$\equiv \ \def\defaultHypSeparation{\hskip .0in}
\AxiomC{$\D D^{*}$}
\noLine
\UnaryInfC{$(A\lor B)^{*}$}
%\RightLabel{\footnotesize$\forall E$}
\UnaryInfC{$A^{*}\curlyvee B^{*}$}
\AxiomC{$\mathscr{D}_A$}\noLine\UIC{$A^*\supset (A\lor B)^{*}$}
\AXC{$\stackrel{n_3}{A^*}$}
%\RightLabel{\footnotesize$\supset $E}
\BIC{$(A\lor B)^{*}$}
%\RightLabel{\footnotesize$\forall E$}
\UnaryInfC{$A^*\curlyvee B^*$}
\AxiomC{$\stackrel{o_2}{A^{*}\supset X}$}
%\RightLabel{\footnotesize$\supset$E}
\insertBetweenHyps{\hspace{-.6cm}}
\BinaryInfC{$(B^{*}\supset X)\supset X$}
\AxiomC{$\stackrel{o_1}{B^{*}\supset X}$}
%\RightLabel{\footnotesize$\supset$E}
\insertBetweenHyps{\hspace{-.1cm}}
\BinaryInfC{$X$}
\RightLabel{\footnotesize%$\supset$I 
$(n_3)$}
\UnaryInfC{$A^{*}\supset X$}
%\RightLabel{\footnotesize$\supset$E}
\insertBetweenHyps{\hspace{-1.3cm}}
\BinaryInfC{$(B^*\supset X)\supset X$}
\AxiomC{$\mathscr{D}_B$}\noLine
\UIC{$B^*\supset (A\lor B)^{*}$}
\AXC{$\stackrel{m_3}{B^*}$}
%\RightLabel{\footnotesize$\supset$E}
\BIC{$A\lor B)^{*}$}
%\RightLabel{\footnotesize$\forall$E}
\UnaryInfC{$A^*\curlyvee B^*$}
\AxiomC{$\stackrel{o_2}{A^{*}\supset X}$}
%\RightLabel{\footnotesize$\supset$E}
\insertBetweenHyps{\hspace{-.6cm}}
\BinaryInfC{$(B^{*}\supset X)\supset X$}
\AxiomC{$\stackrel{o_1}{B^{*}\supset X}$}
%\RightLabel{\footnotesize$\supset$E}
\insertBetweenHyps{\hspace{-.1cm}}
\BinaryInfC{$X$}
\RightLabel{\footnotesize%$\supset$I 
$(m_3)$}
\UnaryInfC{$B^{*}\supset X$}
%\RightLabel{\footnotesize$\supset$E}
\insertBetweenHyps{\hspace{-.7cm}}
\BinaryInfC{$X$}
\RightLabel{\footnotesize%$\supset$I 
$(o_1)$}
\UnaryInfC{$(B^{*}\supset X)\supset X$}
\RightLabel{\footnotesize%$\supset$I 
$(o_2)$}
\UnaryInfC{$A^*\curlyvee B^*$}
\DisplayProof\ =_{\beta}^{2\supset}
$$
$$=_{\beta}^{2\supset}\ 
\AxiomC{$\D D^{*}$}
\noLine
\UnaryInfC{$(A\lor B)^{*}$}
\RightLabel{\footnotesize$\forall E$}
\UnaryInfC{$A^{*}\curlyvee B^{*}$}\AxiomC{$\stackrel{o_2}{A^{*}\supset X}$}
\AxiomC{$\stackrel{n_3}{A^{*}}$}
\RightLabel{\footnotesize$\supset$E}
\BinaryInfC{$X$}
\RightLabel{\footnotesize$\supset$I $(n_3)$}
\UnaryInfC{$A^{*}\supset X$}
\RightLabel{\footnotesize$\supset$E}
\BinaryInfC{$(B^*\supset X)\supset X$}
\AxiomC{$\stackrel{o_1}{B^{*}\supset X}$}
\AxiomC{$\stackrel{m_3}{B^{*}}$}
\RightLabel{\footnotesize$\supset$E}
\BinaryInfC{$X$}
\RightLabel{\footnotesize$\supset$I $(m_3)$}
\UnaryInfC{$B^{*}\supset X$}
\RightLabel{\footnotesize$\supset$E}
\BinaryInfC{$X$}
\RightLabel{\footnotesize$\supset$I $(o_1)$}
\UnaryInfC{$(B^{*}\supset X)\supset X$}
\RightLabel{\footnotesize$\supset$I $(o_2)$}
\UnaryInfC{$A^*\curlyvee B^*$}
\DisplayProof\  =_{\eta}^{2\supset}$$

\medskip
$$=_{\eta}^{2\supset}  \ \AxiomC{$\D D^{*}$}
\noLine
\UnaryInfC{$(A\lor B)^{*}$}
\RightLabel{\footnotesize$\forall E$}
\UnaryInfC{$A^{*}\curlyvee B^{*}$}\DisplayProof \equiv \ \mathscr{D}''^*
$$\endgroup
\end{proof}

\begin{remark}
By inspecting the proof of proposition \ref{etaa} it is clear that for m-closed instances of $=_{\gamma}^{\vee}$, we have that $\mathscr{D}'=_{\gamma^{mc}}^\vee \mathscr{D}''$ implies $\mathscr{D}'^*=_{\beta\eta^{mc}\varepsilon^{mc}}^{2\supset} \mathscr{D}''^*$. Thus, proposition \ref{etaa} together with theorem \ref{naturality-th}  (and with remark \ref{eta2imp-pres}) provides an alternative way to establish corollary \ref{mc-etapres}.
\end{remark}

\subsection{Generalized RP-connectives and $\varepsilon$-equations}
The results of the previous section can be given for the intuitionistic connectives $\wedge$ and $\bot$ as well, by using their RP-translations $\forall X((A\supset B\supset X)\supset X)$ and $\forall X X$. For instance, the generalized permutation \eqref{gammabotG}  for $\bot$ of section \ref{sec-prel} is mapped by the RP-translation onto the following instance of \eqref{epsilonew}:
\begingroup\makeatletter\def\f@size{10}\check@mathfonts 
$$
\AXC{$\mathscr{D}^*$}\noLine\UIC{$\forall X X$}
\UnaryInfC{$A$}\noLine\UIC{$\mathscr{D}_2$}\noLine\UIC{$B$}\DP \  =_{\varepsilon}\
\AXC{$\mathscr{D}^*$}\noLine\UIC{$\forall X X$}
\UnaryInfC{$B$}\DP
$$
\endgroup
Actually, these results can be generalized to the much wider class of connectives introduced by Schroeder-Heister in the context of his natural extension of natural deduction with rules of arbitrary level \cite{SH1984}. 

According to \cite{Pra79}, given $r$ introduction rules for the connective $\dagger$, which have the following general form (for $1\leq h\leq r$):
\begingroup\makeatletter\def\f@size{10}\check@mathfonts 
\begin{equation*}
\AXC{$[A^{h}_{1{1}}]\ \ \ldots\ \ [A^{h}_{k_{1}{1}}]$}\noLine\UIC{$B^{h}_{1}$}\AXC{\ldots}\AXC{$[A^{h}_{1{n_{h}}}] \  \ \ldots \ \ [A^{h}_{k_{n_{h}}{n_{h}}}]$}\noLine\UIC{$B^{h}_{n_{h}}$}\RightLabel{\footnotesize $\dagger I_{h}$}\TIC{$\dagger(C_1\ldots C_{m})$}\DP
\end{equation*}\endgroup
where all the $A^{h}_{lj}$ and $B^{h}_j$ are identical with one of the $C_i$ ($1 \leq l\leq k_{j}$, $1\leq j\leq n_{h}$ and $1 \leq i\leq m$). Applications of the rule can discharge the assumptions of the form $A^{h}_{l{j}}$ ($1 \leq l\leq kj$) in the derivation of the premise $B^{h}_{j}$ ($1<j<n_{h}$).

Given $r$ introduction rules $\dagger I_{1},\dots, \dagger I_{r}$ for the connective $\dagger$ of the form above, a unique elimination rule construed after the model of disjunction which ``inverts'' (in the sense of Lorenzen's inversion principle) this collection of introduction rules is the following:
\begingroup\makeatletter\def\f@size{10}\check@mathfonts 
\begin{equation*}\label{elim}
 \AXC{$\dagger(C_1\ldots C_{m})$}
 \AXC{$[R_{1}^{1}]\ \  \ldots \ \ [R_{n_{1}}^{1}]$}
 \noLine
 \UIC{$X$}
 \AXC{\ldots}
 \AXC{$[R_{1}^{r}]\ \  \ldots \ \ [R_{n_{r}}^{r}]$}
 \noLine
 \UIC{$X$}
 \RightLabel{\footnotesize $\dagger$E}
 \QuaternaryInfC{$X$}
 \DP5%
 \end{equation*}\endgroup
where $X$ is fresh and each $R_{j}^{h}$ discharged by the $\dagger$E rule corresponds to the $j$-th premise of the $\dagger$-intro$_h$ rule. I.e., \mbox{$ R_{j}^{h} = A_{1j}^{h} \supset \dots \supset A_{k_{j}j}^{h}\supset B_{j}^{h}$} if $k>0$; $ R_{j}^{h} =B_{j}^{h}$ otherwise.

The RP-translation of $\dagger (C_1\ldots C_{m})$ is given by the formula 
\begingroup\makeatletter\def\f@size{10}\check@mathfonts 
\begin{equation*}
\dagger(C_1\ldots C_{m})^{*}:=\forall X \big (  (R_{1}^{1}\supset \dots \supset R_{n_{1}}^{1} \supset X) \supset \dots \supset  (R_{1}^{r}\supset \dots \supset R_{n_{r}}^{r} \supset X) \supset X 
\big )
\end{equation*}\endgroup
As all formulas of the form 
$R_{1}^{h}\supset \dots \supset R_{n_{h}}^{h}\supset X$
are p-$X$, the formula $\dagger(C_1\ldots C_{m})^{*}$ is of the form $\forall X F$, for $F$ nested p-$X$, and thus results analogous to proposition \ref{conv} and \ref{etaa} can be established for all such connectives as well. 

In fact, these formulas form a dinstguished sub-class of p-$X$ formulas to which we refer as strictly positive formulas:
\begin{definition}[strictly positive and nested strictly positive formulas]
If $C$ is p-$X$ and either $X$ does not occur free in $C$ at all, or it occurs free only once, as the rightmost variable in $C$, we call $C$ \emph{strictly positive in $X$}, abbreviated sp-$X$. 

More formally, let $ \forall\OV Y_{i}$ (for $1\leq i\leq n$) denote finite (possibly empty) sequences of quantifiers. Then $C$ is sp-$X$ when  $C$ is $$\forall \OV Y_{1}(F_{1}\supset \forall \OV Y_{2}(F_{2}\supset \dots \supset \forall \OV Y_{n}(F_{n}\supset Z)\dots ))$$ where $X$ does not occur free in any of the $F_i$ (for $1\leq i\leq n$).
 
Let $ \forall\OV Y_{i}$ (for $1\leq i\leq n$) denote finite (possibly empty) sequences of quantifiers. A formula of the form 
\begingroup\makeatletter\def\f@size{10}\check@mathfonts 
 $$\forall \OV Y_{1}(F_{1}\supset \forall \OV Y_{2}(F_{2}\supset \dots \supset \forall \OV Y_{n}(F_{n}\supset X)\dots ))$$\endgroup
 is {\em nested sp-$X$} provided that $\forall \OV Y_1 F_{1},\dots, \forall \OV Y_1 \ldots \forall \OV Y_n F_{n}$ are sp-$X$  formulas.
 \end{definition}
 
 The RP-translation of $\dagger(C_1\ldots C_{m})^{*}$ is in fact of the form $\forall X F$, for $F$ nested sp-$X$.

In \cite{SH1984}, Schroeder-Heister got rid of the left-iterated implications in the elimination rules by enriching the structural means of expression of natural deduction  by allowing not only formulas but also (applications of) rules to be assumed in the course of a derivation and by allowing rules to discharge not only formulas but also previously assumed rules. Once the structural device of rule-discharge is introduced, nothing prohibits its use in introduction rules, thereby yielding a yet richer class of connectives definable by means of introduction and elimination rules. In \cite{SH14hlr}, the structural means of expression for defining connectives have been further enriched by admitting a form of structural quantification, in terms of which, for instance, the introduction rule for negation can be formulated as: 

\begingroup\makeatletter\def\f@size{10}\check@mathfonts 
\begin{lrbox}{\mypti}% Store prooftree in \mypti
\begin{varwidth}{\linewidth}
$\left(\AXC{$[A]$}\noLine\UIC{$X$}\DP\right)_X$
\end{varwidth}
\end{lrbox}
$$\AXC{\usebox{\mypti}}\UIC{$\neg A$}\DP$$\endgroup	
where the notation $()_X$ indicates that $X$ plays a role similar to the {\em eigenvariable} $X$ in the second order $\forall$I. As the elimination rule obtained by inverting a collection of introduction rules of this more general form follows the same pattern above, it should be clear that the RP-translation can be generalized accordingly, and that the RP-translation of a logically complex formula governed by such a connective will be of the form $\forall X   F$, with $F$ sp-$X$. 

As  Schroeder-Heister \cite{SH14hlr} has shown, however, one can define (well-behaved) connectives by specifying introduction and elimination rules for them that do not follow the strict pattern given above, but a more relaxed condition (but see \cite{Tra17} for a criticism of Schroeder-Heister's proposal). As natural transformations between sp-$X$ formulas are enough to describe permutations for connectives obeying inversion, we conjecture that natural transformations between formulas containing only positive (not necessarily strictly positive) occurrences of $X$ can be used to represent permutations for a more general class of connectives.  It seems reasonable to conjecture also the converse, namely that permutative conversions {\em cannot} be defined for those connectives whose RP-translation is not a nested p-$X$ formula. Further investigations of these matters will be undertook in a subsequent paper.

\section{Conclusions}

In this paper we have introduced pn-$X$ derivations and shown that they satisfy a naturality condition which translates a restricted version of the dinaturality condition in functorial semantics. Moreover we showed the exact sense in which the naturality of p-$X$ derivations corresponds to a principle of permutation which holds for closed derivations of quantified nested p-$X$ formulas. 

The RP-translation suggested to consider a particular sub-class of nested p-$X$ formulas, nested sp-$X$ formulas, the universal closure of which translate the formula constructed with propositional connectives satisfying the inversion principle of Prawitz and Schroeder-Heister: The permutative principle for nested sp-X formulas translates the generalized permutative and eta conversions for these connectives. 

In recent work Ferreira and Ferreira \cite{Ferreira2009} have established similar results by embedding intuitionistic propositional logic into the fragment of \Nd in which $\forall$E is restricted to atomic instantiation. In a follow-up paper we will investigate the relationship between the their approach and ours and investigate their generalization to the case of nested p-$X$ formulas in relation to the connectives definable using rules of higher level and propositional quantification.

%%%%%%  ACKNOWLEDGEMENTS SECTION
\paragraph{Acknowledgements.} 
This work has been carried out as part of the ANR-DFG project ``Beyond Logic'' (Schr275/17-1 / ANR-14-FRAL-0002) The first author acknowledges moreover the support of DFG who financed his work as part of the project ``Logical Consequence and Paradoxical Reasoning'' (Tr1112/3-1).

\appendix

\section{Proof of proposition \ref{normalspx}}\label{appendino}

\begin{definition}[Sub-formula]
The {\em sub-formulas} of $A$ are defined by induction on the number of logical signs in $A$ as follows:
\begin{itemize}
 \item if $A\equiv X$, the only sub-formula of $A$ is $A$ itself;
 \item if $A\equiv B\supset C$, the sub-formulas of $A$ are $A$ itself and the sub-formulas of $B$ and  $C$;
 \item if $A\equiv \forall X B$, the sub-formulas of $A$ are $A$ itself and the sub-formulas of $B$;
\end{itemize}
\end{definition}

\begin{lemma}\label{spix}
If $A$ is pn-$X$ (resp. sp-$X$), all its sub-formulas are pn-$X$ (resp. sp-$X$).%If $\mathscr{D}$ is a normal sp-$X$ derivation, all formulas occurring in $\mathscr{D}$ are sp-$X$. \LUCA{non trivial, perche non c'e sottoformula in sistema F, mi pare. In ogni caso, ci serve sto remark?}.
\end{lemma}

Lemma \ref{spix} is not enough to warrant that in a $\beta$-normal pn-$X$ (resp. sp-$X$) derivation all formulas are pn-$X$ (resp. sp-$X$), since normal derivations in \Nd do not enjoy the sub-formula property. However, we can show that if a pn-$X$ (resp. sp-$X$) derivation is $X$-safe, then this is the case, as a consequence of a weakened form of the sub-formula property (see proposition \ref{subf}).

\begin{definition}[$X$-equivalence]  We say that $A<_{X}' B$ iff for some $Y\nequiv X$ and for some $C$, such that $X$ does not occur in $C$, $B\equiv A\llbracket C/Y\rrbracket$. 

Let the relation $A<_{X} B$ and $='_{X}$ be defined (respectively) as the transitive closure and the reflexive and symmetric closure  of the relation $A<_{X}' B$.%, given by
%$$A\leq_{X}^{1} A\llbracket C/Z\rrbracket  \qquad \qquad Z\neq X, X \text{ not occurring in }C$$
%Let moreover $=_{X}$ be the reflexive and symmetric closure of $\leq_{X}$.

Finally, let the relation $=_{X}$ be the union of  $A<_{X} B$ and $='_{X}$ (i.e.~the reflexive symmetric and transitive closure of $A<_{X}' B$). 
\end{definition}

To establish the weakened sub-formula property for pn-$X$ and sp-$X$ derivations we first prove the following:

\begin{lemma}\label{jijji1} If $A<_X B\supset C$, then there exist sub-formulas $A_{1}$ and $A_{2}$ of $A$ such that $A_{1}\leq_{X} B$ and $A_{2}\leq_{X}C$;
%\item if $A\leq_{X} \forall Z B$, then there exists a sub-formula $A'$ of $A$ such that $A\leq_{X} B$.
\end{lemma}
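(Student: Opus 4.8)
The plan is to prove Lemma \ref{jijji1} by induction on the number of steps in the relation $A <_X B\supset C$, which by definition is the transitive closure of the one-step relation $<_X'$. Recall that $A <_X' D$ means $D \equiv A\llbracket E/Y\rrbracket$ for some variable $Y \nequiv X$ and some formula $E$ in which $X$ does not occur. So the whole task reduces to understanding how substitution of an $X$-free formula for a non-$X$ variable can produce an implication, and then propagating the resulting structural information through finitely many such substitutions.

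First I would treat the \emph{base case}, where $A <_X' B\supset C$ in a single step, i.e.\ $B\supset C \equiv A\llbracket E/Y\rrbracket$ with $Y\nequiv X$ and $X$ not in $E$. The key observation is that substitution cannot create the \emph{outermost} connective out of nothing: if $A\llbracket E/Y\rrbracket$ is an implication, then either (i) $A$ itself is an implication $A\equiv A_1\supset A_2$, in which case $B \equiv A_1\llbracket E/Y\rrbracket$ and $C\equiv A_2\llbracket E/Y\rrbracket$, so $A_1 <_X B$ (or $A_1 \equiv B$) and $A_2 <_X C$ (or $A_2\equiv C$) with $A_1, A_2$ sub-formulas of $A$; or (ii) $A\equiv Y$ and the implication comes entirely from $E$, i.e.\ $E\equiv B\supset C$ — but then $X$ does not occur in $B$ or $C$, and since $A\equiv Y$ is a sub-formula of itself with $X$ not occurring in $B,C$, one checks directly that $Y \leq_X B$ and $Y\leq_X C$ hold vacuously via the $X$-free substitution clause. (The remaining shape $A\equiv Z$ with $Z\nequiv Y$ cannot yield an implication, and $A\equiv \forall W A'$ yields a universal, not an implication, so these are excluded.) In each surviving case I exhibit the required sub-formulas $A_1, A_2$ of $A$ together with the $\leq_X$ relations.

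For the \emph{inductive step}, suppose $A <_X B\supset C$ holds via $n+1$ one-step substitutions, factoring as $A <_X' A'$ followed by $A' <_X B\supset C$ in $n$ steps. By the induction hypothesis applied to $A' <_X B\supset C$, there are sub-formulas $A'_1, A'_2$ of $A'$ with $A'_1 \leq_X B$ and $A'_2 \leq_X C$. What remains is to pull these back across the single step $A <_X' A'$, i.e.\ to find sub-formulas $A_1, A_2$ of $A$ mapping to $A'_1, A'_2$ under the substitution $\llbracket E/Y\rrbracket$. Here I would use the general principle that every sub-formula of $A\llbracket E/Y\rrbracket$ either is a sub-formula of $A$ (with $Y$ possibly substituted) or is a sub-formula of the inserted $E$; in the former case the pre-image is an honest sub-formula $A_i$ of $A$ with $A_i <_X' A'_i$ (or $A_i \equiv A'_i$), giving $A_i <_X B$ (resp.\ $C$) by composing with the IH relation, and in the latter case the sub-formula already lies in $E$, is $X$-free, and the desired $\leq_X$ relation follows from the $X$-free clause of $<_X'$. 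Either way, transitivity of $<_X$ (and the fact that $=_X'$ absorbs into $\leq_X$) delivers $A_1 \leq_X B$ and $A_2 \leq_X C$.

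The main obstacle I anticipate is the bookkeeping in the base case around which part of the resulting implication $B\supset C$ comes from $A$ versus from the substituted formula $E$ — in particular the degenerate case $A\equiv Y$, where the entire implication is inherited from $E$ and one must invoke the $X$-free-substitution clause (rather than a genuine sub-formula decomposition of $A$) to certify $Y\leq_X B$ and $Y\leq_X C$. Getting the definitions of $\leq_X$ and $=_X$ to line up correctly in this boundary case, and ensuring the inductive pull-back respects the distinction between sub-formulas of $A$ and sub-formulas of $E$, is where I would spend the most care; the rest is a routine structural induction leaning on the fact that substitution commutes with the sub-formula relation and cannot manufacture a leading implication except in the two ways identified above.
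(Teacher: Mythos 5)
Your proof is correct, but it takes a genuinely different route from the paper's. The paper does not induct on the length of the substitution chain at all: it takes the composite substitution $\theta_1\cdots\theta_n$ witnessing $A<_X B\supset C$ as a single object and case-analyzes the shape of $A$ once — if $A\equiv A_1\supset A_2$, the substitutions distribute over $\supset$, so $B\equiv A_1\theta_1\cdots\theta_n$ and $C\equiv A_2\theta_1\cdots\theta_n$ immediately; $A\equiv \forall Z\, A'$ is impossible because substitution preserves the outer quantifier; and if $A$ is a variable $Y$, one takes $A_1\equiv A_2\equiv Y$, exactly as in your base case (ii). In other words, the paper's proof is your base case run once against the whole chain, exploiting the fact that the homomorphism property of substitution holds just as well for a composite of substitutions as for a single one; this is what lets it skip the induction entirely. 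Your induction on one-step substitutions instead requires an extra ingredient: the pull-back principle that every sub-formula of $A\llbracket E/Y\rrbracket$ is either $A_i\llbracket E/Y\rrbracket$ for some sub-formula $A_i$ of $A$ or a sub-formula of $E$, plus transitivity bookkeeping. That principle is sound and reusable, so your argument works, but it is longer. One spot does need tightening: in the inductive case where $A'_i$ lies inside an inserted copy of $E$, a sub-formula of $E$ is \emph{not} a sub-formula of $A$, so you must explicitly exhibit one; the right choice is the variable $Y$ itself (which occurs free in $A$, otherwise this case cannot arise), together with the observation that $B$ is then $X$-free (since $A'_i$ is $X$-free and all substitution images along the chain are $X$-free), so that $Y<_X' B$ via $B\equiv Y\llbracket B/Y\rrbracket$. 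This is the same move as your base case (ii) and completes routinely, but your phrase ``follows from the $X$-free clause'' leaves the choice of the sub-formula of $A$ implicit, which is precisely the point the paper's variable case makes explicit.
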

\begin{proof}We prove the two parts of the lemma separately:
Let $A<_XB\supset C$. Then there exists a finite sequence of substitutions (whose image is made of formulas not containing $X$) $\theta_{1},\dots, \theta_{n}$ such that $A\theta_{1}\dots \theta_{n}\equiv B\supset C$. If $A$ is of the form $A_{1}\supset A_{2}$, then $A\theta_{1}\dots \theta_{n}\equiv A_{1}\theta_{1}\dots \theta_{n}\supset A_{2}\theta_{1}\dots \theta_{n}$, which proves the claim. Otherwise, since $A$ cannot be of the form $\forall Z A'$ (as $(\forall Z A')\theta_{1}\dots \theta_{n}\equiv \forall Z (A\theta'_{1}\dots \theta'_{n})$, where $\theta_{i}'$ is obtained by renaming all occurrences of $Y$), $A$ must be a variable $Y$, and then $A\llbracket B/Y\rrbracket \equiv B$ and $A\llbracket C/Y\rrbracket \equiv C$, so we can take $A_{1}\equiv A_{2}\equiv A$.

%Let now $A<_X\forall ZB$. Again, there exists a finite sequence of substitutions $\theta_{1},\dots, \theta_{n}$ (whose image is made of formulas not containing $X$) such that $A\theta_{1}\dots \theta_{n}= \forall ZB$. If $A$ is of the form $\forall Z A'$, then $A\theta_{1}\dots \theta_{n}= \forall Z (A\theta'_{1}\dots \theta'_{n})$, hence $A\theta'_{1}\dots \theta'_{n}=B$. Otherwise, since $A$ cannot be of the form $A_{1}\supset A_{2}$ (as $(A_{1}\supset A_{2})\theta_{1}\dots \theta_{n}=A_{1}\theta_{1}\dots \theta_{n}\supset A_{2}\theta_{1}\dots \theta_{n}$), $A$ must be a variable $Y$, and then $A[\forall Z B/Y]=\forall Z B$, so we can take $A'=A$.

\end{proof} 

\begin{lemma}\label{jijji2}
If $A=_{X}B$ and $A$ is pn-$X$ (resp. sp-$X$), then $B$ is pn-$X$ (resp. sp-$X$).
\end{lemma}
\begin{proof}
 
By induction on $A$ one shows that, if $C$ does not contain occurrences of $X$, then $A$ is pn-$X$ (resp. sp-$X$) if and only if $A\llbracket C/Y\rrbracket$ (for $Y\nequiv X$) is pn-$X$ (resp. sp-$X$). This proves the claim for $=_{X}'$ (the reflexive and symmetric closure of $<_X'$). The claim can then be extended to $=_{X}$ by induction on the application to $A$ of a finite number of substitutions.
\end{proof}

We can now establish the following weakened form of the sub-formula property for $\beta$-normal, $X$-safe and sp-$X$ derivations:

\begin{proposition}\label{subf}
Let $\D D$ be a $\beta$-normal $X$-safe derivation. Then, for any formula $F$ occurring in $\D D$, 
\begin{description}
\item[$i.$] for some formula $C$, which is a sub-formula either of the conclusion of $\D D$, or of some undischarged assumption of $\D D$, $F=_{X} C$;
\item[$ii.$] moreover, if $\D D$ ends by an elimination rule, then it has a \emph{principal branch}, i.e. a sequence of formulas $A_{0},\dots, A_{n}$ such that
	\begin{itemize}
	\item $A_{0}$ is an undischarged assumption of $\D D$;
	\item $A_{n}$ is the conclusion of $\D D$;
	\item for all $1\leq i\leq n-1$, $A_{i}$ is the major premise of an elimination rule whose consequence is $A_{i+1}$.

	\end{itemize}

\end{description}
\end{proposition} 
\begin{proof}
We argue by induction on $\D D$. If $\D D$ consists solely of an assumption, there is nothing to prove. If $\D D$ ends with an application of an introduction rule, we must consider two cases:

\begin{enumerate}
\item $\D D$ ends with an application of $\supset$I:
\begingroup\makeatletter\def\f@size{10}\check@mathfonts
$$\AXC{$[\stackrel{n}{A}]$}\noLine\UIC{$\D D'$}\noLine\UIC{$B$}\RightLabel{$\supset$I $(n)$}\UIC{$A\supset B$}\DP$$\endgroup
Let $F$ be a formula occurring in $\D D$. Unless $F$ is $A\supset B$, in which case there is nothing to prove, $F$ occurs in $\D D'$. Then, by induction hypothesis, two possibilities arise: either for some sub-formula $C$ of an undischarged assumption $C'$ of $\D D'$, $F=_{X}C$; or for some sub-formula $C$ of $B$, $F=_{X}C$. In the first case, if $C'$ is different from $A$, then it is an undischarged assumption of $\D D$ and we are done; if $C'$ is $A$, we conclude by remarking that $A$ is a sub-formula of the conclusion $A\supset B$ of $\D D$. In the second case, we conclude similarly by remarking that $B$ is a sub-formula of the conclusion $A\supset B$.
\item $\D D$ ends with an application of $\forall$I rule:
\begingroup\makeatletter\def\f@size{10}\check@mathfonts
$$\AXC{$\D D'$}\noLine\UIC{$A$}\RightLabel{$\forall$I}\UIC{$\forall Y A$}\DP$$\endgroup
Let $F$ be a formula occurring in $\D D$. Again, unless $F$ is $\forall YA$, in which case there is nothing to prove, $F$ occurs in $\D D'$. Then, by induction hypothesis, either for some sub-formula $C$ of an undischarged assumption $C'$ of $\D D'$, $F=_{X}C$, or for some sub-formula $C$ of $A$, $F=_{X}C$. In the first case, we are done, as all undischarged assumptions of $\D D'$ are undischarged assumptions of $\D D$; in the second case, we conclude by remarking that $A$ is a sub-formula of the conclusion $\forall YA $ of $\D D$.
\end{enumerate}

If $\D D$ ends with an application of an elimination rule, again we must consider two cases:
\begin{enumerate}
\item $\D D$ ends with an application of $\supset$E:
\begingroup\makeatletter\def\f@size{10}\check@mathfonts
$$\AXC{$\D D_{1}$}\noLine\UIC{$A\supset B$}\AXC{$\D D_{2}$}\noLine\UIC{$A$}\RightLabel{$\supset$E}\BIC{$B$}\DP$$\endgroup
As $\D D$ is $\beta$-normal, the subderivation $\D D_{1}$ cannot end with an introduction rule. Hence, by induction hypothesis, there exists a principal branch $A_{0},\dots, A_{n}\equiv A\supset B$ in $\D D_{1}$. It can be easily shown by induction that, for all $1\leq i\leq n$, there exists a sub-formula $A'$ of $A_{0}$ such that $A'<_XA_{i}$. Hence, in particular, there exists a sub-formula $A'$ of $A_{0}$ such that $A'<_X A\supset B$. 

Now, if $F$ occurs in $\D D$ then, unless $F$ is $B$, in which case we are done, $F$ must occur either in $\D D_{1}$, either in $\D D_{2}$. In the first case, by induction hypothesis, either for some $C$ sub-formula of an undischarged assumption of $\D D_{1}$, $F=_{X} C$, in which case we are done, or for some sub-formula $C$ of $A\supset B$, $F=_{X}C$, in which case we use the fact that $A\supset B=_{X}A'$ and the transitivity of $=_{X}$. In the second case, either for some $C$ sub-formula of an undischarged assumption of $\D D_{2}$, $F=_{X}C$, in which case we are done, or for some sub-formula of $A$, $F=_{X}C$; in this last case, as $A$ is a sub-formula of $A\supset B$, by lemma \ref{jijji1}, there exists a sub-formula $A''$ of $A'$ (and hence of $A_{0}$), such that $A''<_XA$.

Finally, take $A_{n+1}\equiv B$ and we obtain a principal branch for $\D D$.

\item $\D D$ ends with an application of $\forall$E:
\begingroup\makeatletter\def\f@size{10}\check@mathfonts
$$\AXC{$\D D'$}\noLine\UIC{$\forall ZA$}\RightLabel{$\forall$E}\UIC{$A\llbracket B/Z\rrbracket $}\DP$$\endgroup
As $\D D$ is $X$-safe, $X$ does not occur in $B$. Moreover, as $\D D$ is $\beta$-normal, by proposition \ref{closenormintro} the subderivation $\D D'$ cannot end by an introduction. Hence, by induction hypothesis, there exists a principal branch $A_{0},\dots, A_{n}=\forall ZA$ in $\D D'$. Then, there exists a sub-formula $A'$ of $A_{0}$ such that $A'<_X\forall ZA$.

Now, if $F$ occurs in $\D D$ then, unless $F\equiv A\llbracket B/Z\rrbracket $, in which case we are done, $F$ must occur in $\D D'$. Then, by induction hypothesis, either there exists a sub-formula $C$ of an undischarged assumption of $\D D'$ such that $F=_{X}C$, in which case we are done, or for some sub-formula $C$ of $\forall Z A$, $F=_{X}C$, in which case we use the fact that $A'=_{X}\forall ZA$ as well as the transitivity of $=_{X}$.

Finally, take $A_{n+1}\equiv A\llbracket B/Z\rrbracket$ and we obtain a principal branch for $\D D$.
\end{enumerate}
\end{proof}

Proposition \ref{normalspx} is an immediate corollary of proposition \ref{subf}:
\begin{proof}[Proof of proposition \ref{normalspx}.]
By proposition \ref{subf}, if a formula $F$ occurs in $\D D$, then for some formula $C$, which is either a sub-formula of an undischarged assumption of $\D D$, either a sub-formula of the conclusion of $\D D$, $F=_{X}C$. By lemma \ref{spix}, $C$ is sp-$X$. Hence, by lemma \ref{jijji2}, $F$ must be pn-$X$ \mbox{(resp.~sp-$X$)}.
\end{proof}

%%%%%%  AUTHOR'S ADDRESS INFORMATION AT THE END OF THE PAPER:
%
%\AuthorAdressEmail{Luca Tranchini}{Department of Computer Science\\
%Eberhard Karls Universit\"at T\"ubingen\\
%Sand 13 \\
%72076 T\"ubingen, Germany}{luca.tranchini@gmail.com}
%
%%%%  In case of more than one author use also 
%%%%  the following as many times as needed:
%
%\AdditionalAuthorAddressEmail{Paolo Pistone}{Dipartimento di Matematica e Fisica\\
%Universit\`a Roma Tre\\
%Largo S. Leonardo Murialdo, 1\\
%00146, Rome, Italy}{paolo.pistone@uniroma3.it}
%
%
%\AdditionalAuthorAddressEmail{Mattia Petrolo}{IHPST, CNRS, ENS\\
%Universit\'e Paris 1 Panth\'eon Sorbonne\\
%13, rue du Four\\
%75006 Paris, France}{mattia.petrolo@univ-paris1.fr }
%\end{document}
%

%%%%%%  END  

\end{document}